\let\oldtocsection=\tocsection
\let\oldtocsubsection=\tocsubsection
\let\oldtocsubsubsection=\tocsubsubsection
\renewcommand{\tocsection}[2]{\hspace{0em}\oldtocsection{#1}{#2}}
\renewcommand{\tocsubsection}[2]{\hspace{1em}\oldtocsubsection{#1}{#2}}
\renewcommand{\tocsubsubsection}[2]{\hspace{2em}\oldtocsubsubsection{#1}{#2}}
\newcommand{\midarrow}{\draw[postaction={decorate}]}
\tikzset{partial ellipse/.style args={#1:#2:#3}{insert path={+ (#1:#3) arc (#1:#2:#3)} }}
\newcommand\vertex[1]{\fill #1 circle (.05)}
\definecolor{darkorange}{rgb}{1.0, 0.55, 0.0}
\definecolor{forestgreen}{rgb}{0.13, 0.55, 0.13}
 \definecolor{darkgreen}{rgb}{0, .7, 0}
\newtheorem{theorem}{Theorem}
\numberwithin{theorem}{section}
\newtheorem{lemma}[theorem]{Lemma}
\newtheorem{claim}[theorem]{Claim}
\newtheorem{proposition}[theorem]{Proposition}
\newtheorem{corollary}[theorem]{Corollary}
\theoremstyle{remark}
\newtheorem{remark}[theorem]{Remark}
\theoremstyle{definition}
\newtheorem{definition}[theorem]{Definition}
\newtheorem*{claim*}{Claim}
\newtheorem*{corollary*}{Corollary}
\newcommand{\R}{\mathbb{R}} 
\newcommand{\C}{\mathbb{C}}
\newcommand{\Z}{\mathbb{Z}}  
\newcommand{\inv}{^{-1}} 
\DeclareMathOperator{\lk}{lk}
\DeclareMathOperator{\st}{st}
\DeclareMathOperator{\dlk}{dlk} 
\DeclareMathOperator{\Sing}{split}
\DeclareMathOperator{\GL}{GL}
\DeclareMathOperator{\Aut}{Aut}
\DeclareMathOperator{\Out}{Out}
\DeclareMathOperator{\Isom}{Isom}
\newcommand{\G}{\Gamma}             
\newcommand{\AG}{A_\G}                 
\newcommand{\GW}{$\G$-} %
\newcommand{\WP}{{\mathcal P}}     
\newcommand{\WQ}{{\mathcal Q}}     
\newcommand{\bPi}{\Pi}  
\newcommand{\OG}{\mathcal{O}_{\Gamma}}            
\newcommand{\SaG}{\mathbb{S}_{\Gamma}}           
\newcommand{\Sa}{\mathbb{S}}                                 
\newcommand{\SP}{\Sa_\Gamma^\bPi}                                     
\newcommand{\F}{\mathcal{F}}                                    
\newcommand{\K}{\mathbb{K}}
\newcommand{\UX}{\widetilde X}
\newcommand{\iso}{\cong}   
\renewcommand{\sslash}{/\mkern-6mu/}
\title[Finite untwisted subgroups of $\Out(\AG)$]{Finite groups of untwisted outer automorphisms of RAAGs}
\author{Corey Bregman}
\address{Department of Mathematics\\ Tufts University\\ 177 College Ave\\ Medford, MA 02155\\ USA}
\email{Corey.Bregman@tufts.edu}
\author{Ruth Charney}
\address{Department of Mathematics\\ Brandeis University\\ 415 South St\\ Waltham, MA 02453\\ USA}
\email{charney@brandeis.edu}
\author{Karen Vogtmann}
\address{Department of Mathematics\\ University of Warwick\\ Coventry CV4 7AL\\ UK}
\email{K.Vogtmann@warwick.ac.uk}
\begin{document}

\maketitle

\begin{abstract}   For any right-angled Artin group $A_\G$, Charney--Stambaugh--Vogtmann showed that  the subgroup $U^0(\AG) \leq\Out(\AG)$  generated by Whitehead automorphisms and inversions acts properly and cocompactly on a contractible space $K_\G$.  In the present  paper we show that any finite subgroup of $U^0(A_\G)$ fixes a point of $K_\G$. This generalizes the fact that any finite subgroup of $\Out(F_n)$ fixes a point of Outer Space, and implies that there are only finitely many conjugacy classes of finite subgroups in $U^0(A_\G)$
\end{abstract}

\section{Introduction}

A right-angled Artin group (RAAG) is a finitely-generated group whose only defining relations are that some of the generators commute.  This can be encoded by forming a finite simplicial graph $\G$ with one vertex for each generator and an edge between each pair of commuting generators; the associated RAAG is then called $\AG$.  The extreme examples are the free group $F_n$ (if $\G$ has no edges) and the free abelian group $\Z^n$ (if $\G$ is a complete graph).  We are interested in studying finite subgroups of the group $\Out(\AG)$ of outer automorphisms of $\AG$.

For $\AG=\Z^n$, it  follows from the classical Jordan--Zassenhaus theorem that there are only finitely many conjugacy classes of finite subgroups in $\Out(\AG)=\GL(n,\Z)$ (see e.g.\cite{CR62}).  Since $\GL(n,\Z)$ acts on the symmetric space $\GL(n,\R)/O(n)$ preserving a CAT(0) metric, any finite subgroup fixes a point. Since $\GL(n,\R)/O(n)$ can be identified with the space of marked lattices $\Lambda\subset \R^n$, where a {\em marking} is a choice of basis $B,$ which gives an isomorphism $\Lambda\iso \Z^n$, it follows that any finite subgroup $G< \GL(n,\Z)$ acts by isometries on a lattice $\Lambda$.
Equivalently, any finite subgroup   $G< \GL(n,\Z)$ can be embedded in the isometry group of a flat torus $T$, so that the induced action on $\pi_1$ agrees with $G$.

For $\AG=F_n$, there is a Realization Theorem that says any finite subgroup $G$ of $\Out(F_n)$ can be realized as automorphisms of a finite graph $X$ \cite{Cul, Khr, Zim}. This means one can {\em mark} the graph by an isomorphism $\pi_1(X)\iso F_n$ so that automorphisms of $X$ induce the elements of $G$ on $\pi_1$.   Furthermore, one may assume that all vertices of $X$ have valence at least three.  Since there are only finitely many such graphs this implies that there are only finitely many conjugacy classes of finite subgroups of $\Out(F_n)$.  Thus one can study finite subgroups of $\Out(F_n)$ by studying symmetries of such graphs (see, e.g. \cite{Lev, SV}).
An equivalent way to state the Realization Theorem is that the action of the finite group $G\leq \Out(F_n)$ on Outer Space $CV_n$ has a fixed point.

In previous work  we constructed an {\em outer space} $\OG$ for an arbitrary $RAAG$ $\AG$ that combines features of  both $CV_n$ and symmetric spaces \cite{BrCV}.  The group   $\Out(\AG)$  acts on $\OG$ with finite stabilizers, and it is proved in \cite{BrCV}  that  $\OG$ is contractible.
The group $\Out(\AG)$ contains a natural {\em untwisted} subgroup $U^0(\AG)$  which is the whole group in some cases, including the case  $\AG=F_n$.  The results in  \cite{BrCV} build on the
 the fact that $\OG$ contains a  subspace $K_\Gamma$ on which the subgroup $U^0(\AG)$  acts with compact quotient.  The   space $K_\G$ was first  defined in \cite{CSV}, where it was  proved to be contractible.   Points in $K_\G$ are special types of cube complexes called  {\em $\G$-complexes} with special types of markings called {\em untwisted markings}. In this paper we prove the following theorem.
 
 {
\renewcommand{\thetheorem}{\ref{thm:NRealization}}
\begin{theorem}
 Let  $\G$ be a simplicial graph, $G$ a  finite group $G$  and $\rho\colon G\to U^0(\AG)$ a homomorphism.  Then there is a  $\Gamma$-complex $X$ with an untwisted marking $h\colon X\to\Sa_\G$ on which  $\rho$ is realized by isometries.
\end{theorem}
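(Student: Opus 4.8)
A point of $K_\Gamma$ is an equivalence class $[X,h]$ of a $\Gamma$-complex $X$ together with an untwisted marking $h\colon X\to\SaG$, and $U^0(\AG)$ acts on $K_\Gamma$ by changing the marking. Unwinding the definitions, the stabilizer of $[X,h]$ is the image of $\Isom(X)$ under the homomorphism sending an isometry to the automorphism it induces on $\pi_1(X)\iso\AG$ through $h$; since the $\Gamma$-complexes parametrized by $K_\Gamma$ are reduced, this homomorphism $\Isom(X)\to U^0(\AG)$ is injective, so the stabilizer \emph{is} a copy of $\Isom(X)$. Hence the statement "$\rho$ is realized by isometries on $(X,h)$" is exactly "$\rho(G)$ fixes the point $[X,h]\in K_\Gamma$", and the theorem is equivalent to: every finite subgroup $G\leq U^0(\AG)$ fixes a point of $K_\Gamma$. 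This is the form in which I would prove it, and it exhibits the assertion as the common generalization of the two statements recalled in the introduction.

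\textbf{Step 2: the inductive framework.} I would prove the fixed-point statement by induction on the number of vertices of $\Gamma$. The base cases $\Gamma$ a single vertex and $\Gamma$ complete are classical: $U^0(\AG)=\Out(\Z)=\Z/2$ acts on a point, and for $\AG=\Z^n$ the space $K_\Gamma$ is (a spine of) the symmetric space $\GL(n,\R)/O(n)$, where Jordan--Zassenhaus and the Cartan fixed-point theorem give an invariant flat torus. For the inductive step one uses the structure of $U^0(\AG)$ relative to a vertex $v$: restriction and projection homomorphisms relate suitable subgroups of $U^0(\AG)$ to automorphism groups of the RAAGs on the smaller graphs $\Gamma\smallsetminus v$ and $\lk(v)$, to which the inductive hypothesis applies, while the directions at $v$ not seen by those homomorphisms are free-group-like and are controlled by the Nielsen realization theorem for $\Out(F_k)$, in the relative form that permits an invariant subgraph. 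The aim is to assemble from these ingredients a $G$-invariant compatible family of realizations on the pieces of a $\Gamma$-complex obtained by blowing up at $v$.

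\textbf{Step 3: producing the fixed point.} Concretely, I expect the cleanest implementation to be a proof that the fixed set $K_\Gamma^{G}$ is nonempty---in fact contractible---obtained by running the contractibility proof for $K_\Gamma$ of \cite{CSV} equivariantly. That proof equips $K_\Gamma$ with a blow-up/collapse poset structure and canonically simplifies a marked $\Gamma$-complex by collapsing distinguished invariant forests and reducing its Euclidean pieces; each such simplification is natural in $(X,h)$, hence commutes with the $G$-action and restricts to $K_\Gamma^{G}$, so that the Morse-theoretic deformation of $K_\Gamma$ descends to its fixed set. (An alternative, more hands-on route is to cubulate the extension $E_G$ determined by $\rho$: the hypothesis that $\rho$ is \emph{untwisted} forces $E_G$ to permute the conjugacy classes of the link subgroups of $\AG$, and a Sageev-type construction relative to this $E_G$-invariant wall structure---refined to accommodate the Euclidean directions coming from cliques of $\Gamma$---yields a proper cocompact $E_G$-action on a complex whose quotient by $\AG$ is the required $\Gamma$-complex, with $G=E_G/\AG$ acting by isometries.)

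\textbf{Main obstacle.} The crux, in either implementation, is equivariant compatibility. For the first route one must verify that the simplification procedure of \cite{CSV} is genuinely canonical---making no auxiliary choices that could break $G$-equivariance---and repair it where it is not, and one must separately analyse the bottom of the Morse function, where the minimal stratum need not be a single point and an extra argument, feeding back the free and abelian base cases, is needed to locate a $G$-fixed point inside it. For the second route the difficulty is to confirm that the cubulation of $E_G$ really is a $\Gamma$-complex and that the induced marking is \emph{untwisted} and not merely a marking---that is, that the construction introduces no twist---which is precisely where the untwistedness of $\rho$ must be used essentially. I expect this no-twisting / compatibility issue, entangled with the interaction between the cubical (free) and Euclidean (abelian) directions of a general $\Gamma$-complex, to be where the real work lies.
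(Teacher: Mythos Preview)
Your proposal does not constitute a proof; it sketches two strategies, each with an acknowledged obstacle, and neither obstacle is overcome. More seriously, the inductive framework in Step~2 is broken at its foundation. You propose to induct on the number of vertices by removing a single vertex $v$ and invoking restriction maps to $U^0(A_{\Gamma\smallsetminus v})$ and $U^0(A_{\lk(v)})$. But $\Gamma\smallsetminus v$ is \emph{not} in general a $U^0$-invariant subgraph (there may be folds $w\mapsto wv$ for $w\in\Gamma\smallsetminus v$), so there is no restriction homomorphism $U^0(A_\Gamma)\to U^0(A_{\Gamma\smallsetminus v})$ and hence no way to apply the inductive hypothesis. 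The paper's induction is instead over the \emph{chain length} of $U^0$-invariant subgraphs (Lemma~\ref{lem:InvariantSubgraph} characterizes these, Proposition~\ref{prop:restriction} supplies the restriction maps), and the entire architecture of Sections~\ref{sec:U0invariant}--\ref{sec:joins} is devoted to making the glue-along-invariant-subcomplexes step work: identifying canonical invariant subcomplexes $X_\Delta$ (Section~\ref{sec:XDelta}), proving reduced $\Delta$-complexes are extendable (Proposition~\ref{prop:ExtendableAmalgam}), and assembling pieces across joins and disjoint unions (Section~\ref{sec:joins}).

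Several smaller points also indicate confusion. For $\Gamma$ complete there are no $\Gamma$-Whitehead partitions, so $K_\Gamma$ is a single point and $U^0(\Z^n)\cong(\Z/2)^n$ is generated by inversions alone; it is not a spine of $\GL(n,\R)/O(n)$ and Jordan--Zassenhaus plays no role here. The references to ``Euclidean pieces'' of a $\Gamma$-complex suggest you are conflating $K_\Gamma$ with the larger outer space $\mathcal{O}_\Gamma$. Your ``equivariant contractibility'' route would prove that $K_\Gamma^G$ is contractible, which is stated in the paper as an open conjecture---so that route is strictly harder than what is asked. Your ``cubulate the extension $E_G$'' route is the Hensel--Kielak approach, and as the introduction notes, the difficulty there is precisely that one does not know the resulting cube complex is a $\Gamma$-complex; you have not indicated how to resolve this. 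The paper's proof avoids both issues by an explicit, bottom-up construction of the realizing $\Gamma$-complex.
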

\addtocounter{theorem}{-1}
}
The following corollary is immediate.
 {
\renewcommand{\thetheorem}{\ref{cor:FixedPoint}}
\begin{corollary}
 Any finite subgroup of $U^0(\AG)$ has a fixed point in $K_\G$ (and therefore in $\OG$). 
\end{corollary}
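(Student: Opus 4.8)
The plan is to derive the corollary directly from Theorem~\ref{thm:NRealization}, with no work beyond unwinding the definition of the $U^0(\AG)$-action on $K_\G$. Given a finite subgroup $G\leq U^0(\AG)$, I would apply the theorem to the inclusion homomorphism $\rho\colon G\hookrightarrow U^0(\AG)$. This produces a $\Gamma$-complex $X$, an untwisted marking $h\colon X\to\Sa_\G$, and, for each $g\in G$, an isometry $\psi_g\colon X\to X$ realizing $\rho(g)$: under the identification $h_*\colon\pi_1(X)\iso\AG$ supplied by the marking, the isometry $\psi_g$ induces the outer automorphism $\rho(g)=g$.

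Next I would unwind how $g$ moves the point $[X,h]\in K_\G$. The action of $U^0(\AG)$ on $K_\G$ changes only the marking: realizing $g$ by a homotopy equivalence $f_g\colon\Sa_\G\to\Sa_\G$, we have $g\cdot[X,h]=[X,\,f_g\circ h]$. On fundamental groups $(f_g\circ h)_*=g\circ h_*$, while $(h\circ\psi_g)_*=h_*\circ(\psi_g)_*=g\circ h_*$, precisely because $\psi_g$ realizes $g$ through $h$. Hence $f_g\circ h$ and $h\circ\psi_g$ are homotopic markings, so $g\cdot[X,h]=[X,\,h\circ\psi_g]$. But $\psi_g$ is an isometry of the underlying $\Gamma$-complex $X$, and two markings differing by precomposition with such an isometry represent the same point of $K_\G$; therefore $g\cdot[X,h]=[X,h]$.

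Since this holds for every $g\in G$, the point $[X,h]$ is fixed by all of $G$, giving a fixed point in $K_\G$; as $K_\G\subseteq\OG$, the same point is fixed in $\OG$, which is the corollary. The only item to check carefully is that the phrase ``realized by isometries'' in Theorem~\ref{thm:NRealization} is exactly the condition making the two markings above homotopic, and that the equivalence relation defining points of $K_\G$ absorbs isometries of the $\Gamma$-complex. Once that bookkeeping is recorded, no genuine obstacle remains: the entire content sits in Theorem~\ref{thm:NRealization}, and the corollary is a formal consequence.
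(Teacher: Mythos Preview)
Your proposal is correct and matches the paper's approach: the paper simply declares the corollary ``immediate'' from Theorem~\ref{thm:NRealization}, and what you have written is precisely the routine unwinding of that immediacy via the definition of ``realizes'' and the equivalence relation on marked $\G$-complexes defining $K_\G$.
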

\addtocounter{theorem}{-1}
}
 
 {We conjecture that the entire fixed point set is contractible, i.e. that $K_\G$ is an $\underbar{EG}$ for $G=U^0(\AG)$.  Corollary~\ref{cor:FixedPoint} is a necessary first step towards this goal.  }

 It is easy to see that there are only a finite number of combinatorial types of $\G$-complexes, generalizing the fact that there are only a finite number of combinatorial types of graphs in $CV_n$.  This gives us  the following information about finite subgroups of $U^0(\AG)$. 
 
 {
\renewcommand{\thetheorem}{\ref{cor:FinitelyMany}}
\begin{corollary}
 The group $U^0(A_\G)$ contains only finitely many conjugacy classes of finite subgroups.
\end{corollary}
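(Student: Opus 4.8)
The plan is to run the standard argument that deduces finiteness of conjugacy classes of finite subgroups from a fixed-point theorem together with cocompactness of the ambient action — the same argument that works for $\Out(F_n)$ acting on the spine of Outer Space, and for $\GL(n,\Z)$ acting on its symmetric space. The two ingredients I would use are Corollary~\ref{cor:FixedPoint} and the fact, recalled in the introduction, that $U^0(\AG)$ acts properly and cocompactly on $K_\G$; cocompactness is the dynamical counterpart of the finiteness of the set of combinatorial types of $\G$-complexes noted just before the statement.

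First I would fix, using cocompactness, a compact subset $C\subseteq K_\G$ with $U^0(\AG)\cdot C=K_\G$; concretely one may take $C$ to be a union of closed cells, one for each of the finitely many $U^0(\AG)$-orbits of cells of $K_\G$. By properness of the action the set $F=\{\,g\in U^0(\AG): gC\cap C\neq\emptyset\,\}$ is finite. Now let $H\leq U^0(\AG)$ be any finite subgroup. By Corollary~\ref{cor:FixedPoint} there is a point $x\in K_\G$ fixed by $H$, and by the choice of $C$ there is $g\in U^0(\AG)$ with $gx\in C$. Then the conjugate $gHg\inv$ fixes $gx$, so every element of $gHg\inv$ carries $C$ to a translate containing $gx\in C$; hence $gHg\inv\subseteq F$.

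Thus every finite subgroup of $U^0(\AG)$ is conjugate to a subgroup of the finite set $F$. Since a finite set has only finitely many subsets, in particular only finitely many that are subgroups, it follows that $U^0(\AG)$ has only finitely many conjugacy classes of finite subgroups. I do not expect any real obstacle once Corollary~\ref{cor:FixedPoint} is in hand; the only points needing care are the routine bookkeeping — that cocompactness, via the finiteness of combinatorial types of $\G$-complexes, provides such a $C$, and that properness of the action makes $F$ finite.
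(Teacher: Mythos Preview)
Your proof is correct and is essentially the same argument as the paper's, just phrased in the abstract language of proper cocompact actions rather than in the concrete combinatorics of $\G$-complexes. The paper argues directly from Theorem~\ref{thm:NRealization}: there are only finitely many unmarked $\G$-complexes (finitely many compatible collections of $\G$-partitions), each with a finite cubical isometry group, and changing the marking conjugates the subgroup --- which is exactly your cocompactness/properness argument specialized to $K_\G$.
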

\addtocounter{theorem}{-1}
}

 {Extending these theorems to all of $\Out(A_\G)$ presents subtle difficulties that we do not address in this paper.  Among these is the problem of including outer automorphisms of $A_\G$ that are induced by graph automorphisms of $\G$.  More serious is the fact that the full group $\Out(A_\G)$ may contain finite subgroups of of $\GL(n,\Z)$ which do not preserve any $\G$-complex structure, so that understanding these will require additional techniques involving the action of $\GL(n,\Z)$ on the symmetric space $\GL(n,\R)/O(n)$.}

To prove Theorem \ref{thm:NRealization}, we use an inductive approach which starts from the Realization Theorem for $\Out(F_n)$. This was inspired by work of Hensel and Kielak  \cite{HK}, who proved that a finite subgroup $G$ of $U^0(\AG)$ can be realized on some cube complex, but it is not clear whether this can be taken to be a $\G$-complex.  We borrow a number of ideas from \cite{HK}, but our proof is shorter.  In particular, much  of our proof is independent of the specific group $G$ being considered, depending rather on the combinatorial structure of the defining graph $\G$.

\subsection{Structure of the paper and outline of the proof}  In Sections 2 and 3 of the paper  we review  the   group  $U^0(\AG)$ and  the definition and basic properties of $\G$-complexes.

The strategy of the proof is to build a marked $\G$-complex realizing a finite $G<U^0(\AG)$ by gluing together marked $\Delta$-complexes for certain subgraphs $\Delta\subseteq \G$.  The subgraphs we use are those whose associated special subgroup $A_\Delta$ is invariant (up to conjugacy) under $U^0(\AG)$, which we will call {\em $U^0$-invariant subgraphs}. The argument is inductive, and the induction parameter is the {\em chain length} of $\G$, \emph{i.e.} the longest length of a chain of $U^0$-invariant subgraphs contained in $\G$.   In Section~\ref{sec:U0invariant} we study $U^0$-invariant subgraphs $\Delta$, show there is a restriction homomorphism $r_\Delta\colon U^0(\AG)\to U^0(A_\Delta)$, and
show that minimal $U^0$-invariant subgraphs are discrete, providing a base case for our induction.

   In Section~\ref{sec:XDelta} we  show that a  marked $\G$-complex that realizes a finite subgroup $G<U^0(\AG)$ contains a subcomplex associated to each $U^0$-invariant subgraph $\Delta$ with empty link, and that the restriction of the marking to this subcomplex realizes the restriction of $G$ to $U^0(A_\Delta)$.   In Section~\ref{sec:extendable} we address the opposite problem, establishing a necessary condition for extending  a $\Delta$-complex realizing the restriction of $G$   to   a $\G$-complex realizing $G$.

In Section~\ref{sec:joins} we show how to build marked $\G$-complexes when $\G$ is a simplicial join or a disjoint union of subgraphs $\Delta$ for which we already have marked $\Delta$-complexes.
  We also show that if $\G$ is  a join or disjoint union, and one can realize the restriction of a finite subgroup $G<U^0(\AG)$ on each component, then one can realize all of $G$.

Finally, in Section~\ref{sec:main} we induct on the length of a maximal chain of $U^0(\AG)$-invariant subgraphs to construct a  marked $\G$-complex that realizes $G$.

\subsection{Acknowledgements} We would like to thank Dawid Kielak for several helpful discussions  {and the referee for his thoughtful comments}.  The first author is supported by NSF grant DMS-2052801.

\section{Review of RAAGs and the untwisted subgroup of $\Out(\AG)$.} Let $\G$ be a finite simplicial graph. The {\em right-angled Artin group (RAAG)} $\AG$ is the group generated by the vertices $V$ of $\G$ with defining relations given by declaring that adjacent vertices commute.

 By a subgraph of $\G$ we will always mean a full (induced) subgraph, unless otherwise specified. Given a subgraph $\Delta\subseteq \G$, we write $x\in\Delta$ if $x$ is a vertex of $\Delta$.

 For $x\in\G$, the {\em link} $\lk(x)$ is the subgraph spanned by vertices adjacent to $x$. The link of a subgraph $\Delta\subset \Gamma$ is the intersection of the links of all vertices of $ \Delta$. The {\em double link} $\dlk(x)$ is the link of $\Delta=\lk(x)$.

Recall from \cite{CSV} that a {\em $\G$-Whitehead partition} $\WP$ based at $x\in\Gamma$ is a partition of $V^\pm=V\cup V^{-1}$ into three sets: $\lk^\pm(x), P_1$ and $P_2$ satisfying certain conditions. The sets $P_1$ and $P_2$  are called the {\em sides} of $\WP$. A $\G$-Whitehead  partition can be most easily described using the {\em double} $\G^\pm$ of $\G$, where
the vertices of $\G^\pm$ are $V^\pm,$ and two vertices are joined by an edge if they commute but are not inverses   of each other. If $\WP$ is based at $x$,  $\lk^\pm(x)$ consists of all vertices adjacent to $x$ in $\G^\pm$, and each of $P_1,P_2$ is a union of (the vertices in) some connected components of $\Gamma^\pm\setminus \lk^\pm(x).$ Furthermore, we require $x $ and $ x\inv$  to be in different sides of $\WP$, and each side must contain at least one additional element. In this paper we will abbreviate  $\G$-Whitehead partition  to simply {\em $\G$-partition}.

A vertex $y \in V$ is \emph{split} by a $\G$-partition $\WP$ if $y$ is in one side and $y\inv$ is in the other.  If $y$ is split by $\WP$, then  $y$ and $y\inv$ must lie in different components of $\Gamma^\pm \backslash \lk^\pm(x)$, hence $\lk(y) \subseteq \lk(x)$.

A \GW partition $\WP$ based at $x$ determines  a {\em Whitehead automorphism} $\varphi(\WP,x)$,  {defined as follows. Let $P_i$ be the side of $\WP$ containing $x$.    If $\WP$ splits $y$ then $\varphi(\WP,x)$ sends $y\to yx^{-1}$ if $y\in P_i$, and $y\to xy$ if $y\inv\in P_i$.  If both $y$ and $y\inv$ are in $P_i$, then $\varphi(\WP,x)(y)=xyx\inv$.  For all other $y$,  $\varphi(\WP,x)(y)=y$. } 
The simplest Whitehead automorphisms are the {\em folds} sending $y\to yx\inv$  or $y\to xy$ for some $x$ and $y$ (and fixing all generators other than $y$), and the {\em partial conjugations} sending $y\to xyx^{-1}$ for all $y$ in some component $C$ of $\Gamma\setminus \lk(x)$.  These correspond to partitions $\WP=(\lk^\pm(x)|P_1|P_2)$ with $P_1=\{x,y\}$ or $\{x,y\inv\}$ (for a fold) or $P_1=\{x,C^\pm\}$ (for a partial conjugation). Every Whitehead automorphism is a product of folds and partial conjugations.

The subgroup of $\Out(\AG)$ generated by Whitehead automorphisms and by inversions of the generators is denoted $U^0(\AG)$.  If $\AG=F_n$ this is the whole group, \emph{i.e.} $U^0(F_n)=\Out(F_n)$.  If $\AG=\Z^n$ there are no Whitehead automorphisms, and $\Out(\Z^n)=\GL(n,\Z)$ is generated by inversions and  {\em twists}, where a twist sends a generator $y$ to $xy,$  for some $y$ with $\st(y)\subseteq st(x)$ and fixes all other generators.  By a theorem of Laurence and Servatius \cite{Lau,Ser}, for a general RAAG the group   $\Out(\AG)$ is generated by Whitehead automorphisms, inversions, twists and automorphisms of $\Gamma$.

The subgroup generated by $U^0(\AG)$ and graph automorphisms was called the {\em untwisted subgroup} and denoted $U(\AG)$ in \cite{CSV} and \cite{BrCV}.

\section{Blowups and $\G$-complexes}\label{sec:blowups}

Let $\WP=\big(\lk^\pm(x)|P_1|P_2\big)$ be a \GW partition based at $x$.  If $\lk(x)=\lk(y)$ and $\WP$ splits $y$,  then $y$ can also serve as a base for $\WP$.    Specifying a choice of base specifies the corresponding Whitehead automorphism, but we will often use  \GW partitions without specifying a base, in which case we write $\WP=\big(\lk(\WP)|P_1|P_2\big).$

We say  \GW partitions $\WP,\WQ$ are {\em adjacent} if some (hence any) base of $\WP$ commutes with some (any) base of $\WQ$, and they are {\em compatible} if either they are adjacent or some side of $\WQ$ is disjoint from some side of $\WP$.  A collection $\bPi$ of \GW partitions is a {\em compatible collection} if its elements are distinct and pairwise compatible.
In   \cite{CSV} the authors constructed a labeled cube complex $\SP$ called a {\em blowup}  from a compatible collection $\Pi$ of \GW partitions. The underlying (unlabeled) cube complex  is called  a {\em $\G$-complex.}
In this section we review some facts about special cube complexes and $\G$-complexes that we will need.

\subsection{Special cube complexes, collapsing and  duplicating   hyperplanes.}
Recall that a cube complex is called a {\em special cube complex} if it is locally CAT(0) and  has no hyperplanes that self-intersect or are one-sided, self-osculating or  inter-osculating. We refer to the original article by Haglund and Wise \cite{HaWi} for the basic definitions.

Let $X$ be a special cube complex.
If  $H$ is a hyperplane in $X$ the {\em collapse map} $c\colon X\to X\sslash H$ collapses the carrier $\kappa(H)$ of $H$ orthogonally onto $H$.   We say the result $X\sslash H$ is obtained from $X$ by a {\em hyperplane collapse} (See Figure~\ref{fig:collapse}).

\begin{figure}
  \begin{center}
 \begin{tikzpicture} [scale=.55]
\fill [red!15] (0,0) to (2,0) to (3,1) to (7,1) to (8,2) to (8,4) to (6,4) to (5,3) to (1,3) to (0,2) to (0,0);
  \fill [red!50] (0,.9) to (1,1.9) to (3, 1.9) to (2,.9) to (0,.9);
   \fill [red!50] (5,1.9) to (6,2.9) to (8, 2.9) to (7,1.9) to (5,1.9);
  \draw [thick,red] (3,1.9) to (5, 1.9);
   \draw (5,1) to (7,1) to (7,3) to (5,3) to (5,1);
  \draw (6,2) to (8,2) to (8,4) to (6,4) to (6,2);
    \draw (5,1) to (6,2);  \draw (7,1) to (8,2);  \draw (7,3) to (8,4);  \draw (5,3) to (6,4);
    \draw [blue] (0,2) to (0,4) to (2,4) to (2,2);
   \draw  [blue](1,3) to (1,5) to (3,5) to (3,3);
   \draw  [blue] (0,4) to (1,5);  \draw  [blue](2,4) to (3,5);
\draw [blue] (0,0) to (0,-2) to (2,-2) to (2,0);
   \draw  [blue](1,1) to (1,-1) to (3,-1) to (3,1);
   \draw  [blue] (0,-2) to (1,-1);  \draw  [blue](2,-2) to (3,-1);
  \draw (3,1) to (3,3) to (5,3) to (5,1) to (3,1);
  \draw (0,0) to (2,0) to (2,2) to (0,2) to (0,0);
   \draw (1,1) to (3,1) to (3,3) to (1,3) to (1,1);
   \draw (0,0) to (1,1);  \draw (2,0) to (3,1);  \draw (2,2) to (3,3);  \draw (0,2) to (1,3);
  \draw [blue] (6,4) to (6,6) to (8,6) to (8,4);
  \draw [blue] (5,1) to (5,-1) to (7,-1) to (7,1);
\draw [->] (9, 2.4) to (11,2.4);
 \begin{scope}[xshift=12cm]
  \draw [fill=red!50] (0,2) to (1,3) to (3, 3) to (2,2) to (0,2);
  \draw [fill=red!50] (5,3) to (6,4) to (8, 4) to (7,3) to (5,3);
  \draw [thick, red] (3,3) to (5, 3);
\draw [blue] (5,1) to (7,1) to (7,3) to (5,3) to (5,1);
    \draw [blue] (0,2) to (0,4) to (2,4) to (2,2);
   \draw  [blue](1,3) to (1,5) to (3,5) to (3,3);
   \draw  [blue] (0,4) to (1,5);  \draw  [blue](2,4) to (3,5);
 \draw  [blue] (0,0) to (2,0) to (2,2) to (0,2) to (0,0);
   \draw  [blue] (1,1) to (3,1) to (3,3) to (1,3) to (1,1);
   \draw  [blue] (0,0) to (1,1);  \draw (2,0) to (3,1);  \draw (2,2) to (3,3);  \draw (0,2) to (1,3);
  \draw [blue] (6,4) to (6,6) to (8,6) to (8,4);
  \end{scope}
\end{tikzpicture}
\end{center}
\caption{Hyperplane collapse $X\to X\sslash H$}~\label{fig:collapse}
\end{figure}

The edges that intersect a hyperplane $H$ are said to be {\em dual} to $H$, and by an {\em orientation} on $H$ we mean a consistent choice of orientation of the edges dual to $H$.

If $\mathcal S$ is a collection of hyperplanes, we write $X\sslash\mathcal{S}$ for the space obtained by collapsing all hyperplanes in $\mathcal{S}$  (in any order).  The collection $\mathcal S$ is   {\em acyclic}  if the collapse map $X\to X\sslash\mathcal{S}$ is a homotopy equivalence.

If $H$ is a hyperplane in $X$ with carrier $\kappa(H)$, we can obtain a new cube complex by  doubling $\kappa(H)$ (see Figure~\ref{fig:split}).  We will refer to this as    \emph{duplicating the hyperplane} $H$.    The resulting cube complex has two new hyperplanes $H'$ and $H''$, and collapsing either recovers the original complex $X$.  We say $H'$ and $H''$ are {\em parallel}.
 A hyperplane is called a  {\em duplicate} if it is parallel to another hyperplane.

\begin{figure} 
  \begin{center}
 \begin{tikzpicture} [scale=.55]
\fill [red!15] (0,0) to (2,0) to (3,1) to (7,1) to (8,2) to (8,4) to (6,4) to (5,3) to (1,3) to (0,2) to (0,0);
  \fill [red!50] (0,.9) to (1,1.9) to (3, 1.9) to (2,.9) to (0,.9);
   \fill [red!50] (5,1.9) to (6,2.9) to (8, 2.9) to (7,1.9) to (5,1.9);
  \draw [thick,red] (3,1.9) to (5, 1.9);
   \draw (5,1) to (7,1) to (7,3) to (5,3) to (5,1);
  \draw (6,2) to (8,2) to (8,4) to (6,4) to (6,2);
    \draw (5,1) to (6,2);  \draw (7,1) to (8,2);  \draw (7,3) to (8,4);  \draw (5,3) to (6,4);
    \draw [blue] (0,2) to (0,4) to (2,4) to (2,2);
   \draw  [blue](1,3) to (1,5) to (3,5) to (3,3);
   \draw  [blue] (0,4) to (1,5);  \draw  [blue](2,4) to (3,5);
\draw [blue] (0,0) to (0,-2) to (2,-2) to (2,0);
   \draw  [blue](1,1) to (1,-1) to (3,-1) to (3,1);
   \draw  [blue] (0,-2) to (1,-1);  \draw  [blue](2,-2) to (3,-1);
  \draw (3,1) to (3,3) to (5,3) to (5,1) to (3,1);
  \draw (0,0) to (2,0) to (2,2) to (0,2) to (0,0);
   \draw (1,1) to (3,1) to (3,3) to (1,3) to (1,1);
   \draw (0,0) to (1,1);  \draw (2,0) to (3,1);  \draw (2,2) to (3,3);  \draw (0,2) to (1,3);
  \draw [blue] (6,4) to (6,6) to (8,6) to (8,4);
  \draw [blue] (5,1) to (5,-1) to (7,-1) to (7,1);
\draw [->] (9, 2.4) to (11,2.4);
 \begin{scope}[xshift=12cm]
\fill [red!15] (0,0) to (2,0) to (3,1) to (7,1) to (8,2) to (8,4) to (6,4) to (5,3) to (1,3) to (0,2) to (0,0);
\fill [red!15] (0,2) to (2,2) to (3,3) to (7,3) to (8,4) to (8,6) to (6,6) to (5,5) to (1,5) to (0,4) to (0,2);
  \fill [red!50] (0,.9) to (1,1.9) to (3, 1.9) to (2,.9) to (0,.9);
   \fill [red!50] (5,1.9) to (6,2.9) to (8, 2.9) to (7,1.9) to (5,1.9);
  \draw [thick,red] (3,1.9) to (5, 1.9);
    \fill [red!50] (0,2.9) to (1,3.9) to (3, 3.9) to (2,2.9) to (0,2.9);
   \fill [red!50] (5,3.9) to (6,4.9) to (8, 4.9) to (7,3.9) to (5,3.9);
  \draw [thick,red] (3,3.9) to (5, 3.9);
   \draw (5,1) to (7,1) to (7,3) to (5,3) to (5,1);
  \draw (6,2) to (8,2) to (8,4) to (6,4) to (6,2);
    \draw (5,1) to (6,2);  \draw (7,1) to (8,2);  \draw (7,3) to (8,4);  \draw (5,3) to (6,4);
   \draw (5,3) to (7,3) to (7,5) to (5,5) to (5,3);
  \draw (6,4) to (8,4) to (8,6) to (6,6) to (6,4);
    \draw (5,3) to (6,4);  \draw (7,3) to (8,4);  \draw (7,5) to (8,6);  \draw (5,5) to (6,6);
    \draw [blue] (0,4) to (0,6) to (2,6) to (2,4);
   \draw  [blue](1,5) to (1,7) to (3,7) to (3,5);
   \draw  [blue] (0,6) to (1,7);  \draw  [blue](2,6) to (3,7);
\draw [blue] (0,0) to (0,-2) to (2,-2) to (2,0);
   \draw  [blue](1,1) to (1,-1) to (3,-1) to (3,1);
   \draw  [blue] (0,-2) to (1,-1);  \draw  [blue](2,-2) to (3,-1);
  \draw (3,1) to (3,3) to (5,3) to (5,1) to (3,1);
  \draw (3,3) to (3,5) to (5,5) to (5,3) to (3,3);
  \draw (0,0) to (2,0) to (2,2) to (0,2) to (0,0);
   \draw (1,1) to (3,1) to (3,3) to (1,3) to (1,1);
   \draw (0,0) to (1,1);  \draw (2,0) to (3,1);  \draw (2,2) to (3,3);  \draw (0,2) to (1,3);
  \draw (0,2) to (2,2) to (2,4) to (0,4) to (0,2);
   \draw (1,3) to (3,3) to (3,5) to (1,5) to (1,3);
   \draw (0,2) to (1,3);  \draw (2,2) to (3,3);  \draw (2,4) to (3,5);  \draw (0,4) to (1,5);
  \draw [blue] (6,6) to (6,8) to (8,8) to (8,6);
  \draw [blue] (5,1) to (5,-1) to (7,-1) to (7,1);
  \end{scope}
\end{tikzpicture}
\end{center}
\caption{Duplicating a hyperplane}~\label{fig:split}
\end{figure}
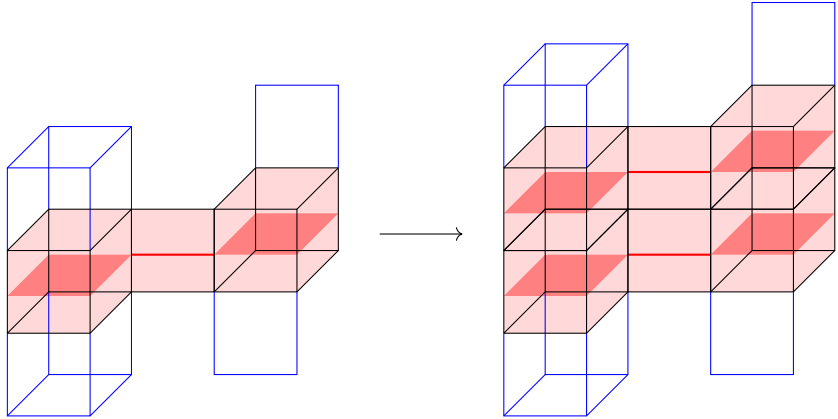
\subsection{Blowups} The blowup $\SP$ associated to  a compatible collection $\Pi$ of \GW partitions  is a special cube complex with no separating hyperplanes and with some extra structure.

If $\G$ is discrete then $\SP$ is a finite connected graph with no separating edges or bivalent vertices, and the extra structure consists of a maximal tree $T$ and an orientation and label on each edge  in $\SP\setminus T$, where the labels are the vertices of $\G$. Each edge of $T$ corresponds to a partition in $\Pi$, determined by the labels and orientations of the edges not in $T$.

  If $\bPi$ is empty, then $\SP$ is the {\em Salvetti complex} $\Sa_\G$ associated to $\AG$. Recall that this is a cube complex with a single $0$-cell, one oriented $1$-cell for each vertex of $\G$ and one $k$-torus for each $k$-clique in $\G$. The orientations on the $1$-cells, which are labeled by vertices of $\G$,  determine an isomorphism $\pi_1(\Sa_\G)\iso \AG$, and the cubical isomorphisms of $\Sa_\G$ can be identified with the automorphisms of the graph $\G$.

In general  $\SP$ has
\begin{itemize}
\item one hyperplane $H_\WP$ for each partition $\WP\in\Pi$ and
\item one hyperplane $H_v$ for each vertex $v\in\G$.
\end{itemize}
The hyperplanes $H_v$ are oriented,  but the hyperplanes $H_\WP$ are not.

The set of hyperplanes   labeled by partitions  is acyclic, and the complex obtained by collapsing all hyperplanes in this set is isomorphic to the Salvetti complex $\SaG$.

 Collapsing a single hyperplane labeled by $\WP\in \Pi$ is equivalent to removing $\WP$ from the collection $\Pi$. In particular collapsing every hyperplane in $\Pi$ other than $\WP$ results in a {\em single blowup} $\mathbb S^\WP$.  This has exactly two vertices $x_1$ and $x_2$, and one can recover $\WP=(\lk(\WP)|P_1|P_2)$ from $\mathbb S^\WP$ by looking at the (oriented!) edges dual to the hyperplanes  $H_v$:  if there is only one edge dual to  $H_v$ and it terminates at $x_i$, then $v\in P_i$; if it originates at $x_i$ then $v\inv\in P_i$.  If there are two edges dual to $H_v$ then $v\in \lk(\WP)$.  The  carrier of $H_\WP$ in $\mathbb S^\WP$ is isomorphic to the product of an interval with the Salvetti for $\lk(\WP)$.
 \eject
\subsection{ {$\G$-complexes}}
\begin{definition} A   cube complex is  called a {\em $\G$-complex}
if it is isomorphic to the underlying cube complex of a blowup $\SP$.   A {\em blowup structure} on a $\G$-complex $X$ is a labeling of its hyperplanes  that identifies $X$ with  a blowup $\SP$, {\em i.e.} hyperplanes are labeled by $\G$-partitions or by vertices of $\G$, and   the hyperplanes labeled by vertices are oriented. A blowup structure determines a {\em collapse map} $c_\pi\colon X\to \SaG$ that collapses all hyperplanes labeled by partitions.  If $v\in \G$, a {\em characteristic cycle} for $v$  is a closed edge path which crosses each hyperplane at most once, and whose image under $c_\pi$ is the loop labeled $v$.
\end{definition}

 In general a $\G$-complex may have several different blowup structures.  For example, if $\G$ is discrete a $\G$-complex is a graph, which  may have several different maximal trees, and the remaining edges may be oriented and labeled with the vertices of $\G$ in any way.

\begin{definition} A set $\mathcal T$ of hyperplanes in a $\G$-complex is called {\em treelike} if  collapsing $\mathcal T$ gives a cube complex isomorphic to $\SaG$.
\end{definition}

The following proposition says that any treelike set of hyperplanes in a $\G$-complex is the set of hyperplanes labeled by partitions in at least one blowup structure.   The only ambiguity comes from the assignment of labels and orientations to the hyperplanes not in the treelike set, which can be permuted by any automorphism of the graph $\G$. If this assignment changes, the partitions labeling the hyperplanes in the tree also change, by the same (signed) permutation of   vertices.

\begin{proposition}\label{lem:procedure} Let $\mathcal T$ be a treelike set of hyperplanes in a $\G$-complex $X$.  Then there is a compatible set of \GW partitions $\Pi$ and an isometry $X\iso \SP$, such that $\mathcal T$ is the set of hyperplanes associated to the partitions in $\Pi$.
\end{proposition}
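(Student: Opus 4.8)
The plan is to reconstruct a blowup structure on $X$ by hand: read off the vertex labels from the collapse $X\sslash\mathcal T\cong\SaG$, read off the partitions from the one-hyperplane collapses, and then recognise $X$ as the resulting blowup.

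Fix an isomorphism $\psi\colon X\sslash\mathcal T\xrightarrow{\sim}\SaG$, which exists because $\mathcal T$ is treelike and is unique up to a cubical automorphism of $\SaG$, i.e.\ up to an automorphism of $\G$. Collapsing a single hyperplane induces a bijection from the remaining hyperplanes onto the hyperplanes of the quotient, and creates no new hyperplanes; iterating over $\mathcal T$ gives a bijection between $\{K : K\notin\mathcal T\}$ and the hyperplanes of $\SaG$, which via $\psi$ and the standard blowup structure on $\SaG$ carry labels by the vertices of $\G$ and orientations. Write $H_v$ for the hyperplane of $X$ labelled $v$.

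For $W\in\mathcal T$ put $X_W:=X\sslash(\mathcal T\setminus\{W\})$, so that $X_W\sslash\{W\}\cong\SaG$ and $\{W\}$ is treelike in $X_W$. The vertices of $X_W$ lying off the carrier $\kappa(W)$ inject into $X_W\sslash\{W\}\cong\SaG$, while the non-empty carrier also contributes a vertex; since $\SaG$ has only one vertex it follows that every vertex of $X_W$ lies on $\kappa(W)\cong W\times I$, that the hyperplane $W$ has a single vertex, and that $X_W$ has exactly two vertices $x_1,x_2$. Applying the single-blowup recipe recalled above to the edges of $X_W$ dual to the (images of the) $H_v$, one reads off data $\WP_W=(\lk(\WP_W)\mid P_1\mid P_2)$: for $v\in\G$, put $v\in P_i$ (resp.\ $v\inv\in P_i$) if $H_v$ has a single dual edge in $X_W$ ending (resp.\ starting) at $x_i$, and $v\in\lk(\WP_W)$ if $H_v$ has two dual edges. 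Set $\Pi:=\{\WP_W : W\in\mathcal T\}$.

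It remains to verify three things. (i) Each $\WP_W$ is a genuine $\G$-Whitehead partition: its declared link is the link of a base, the $P_i$ are unions of components of $\G^\pm\setminus\lk^\pm$, and each side contains a base together with one more element. Granting this, $X_W$ with its labelling is by construction the single blowup $\mathbb S^{\WP_W}$. (ii) The $\WP_W$ are distinct and pairwise compatible, so that $\Pi$ is a compatible collection. (iii) The collapse maps $X\to X_W\cong\mathbb S^{\WP_W}$ agree over $\SaG$ and so assemble into a cubical map from $X$ into the fiber product of the $\mathbb S^{\WP_W}$ over $\SaG$; since $\SP$ is a connected component of that fiber product, one checks this map is a cubical isometry of $X$ onto $\SP$ carrying each $W$ to $H_{\WP_W}$ and each $H_v$ to $H_v$, which is the assertion of the proposition. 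Steps (i) and (ii) are exactly where one uses that $X$ is a $\G$-complex rather than merely a special cube complex carrying a treelike set: after fixing an arbitrary blowup structure $X\cong\Sa_\G^\Omega$, the carriers, hyperplane intersections and vertex links of $X$ are those of a blowup, and one must show that these constraints force the data extracted from $\mathcal T$ to satisfy the $\G$-Whitehead conditions and to be compatible. I expect the cleanest route is to show that an arbitrary treelike set is carried onto the standard set $\{H_\WP : \WP\in\Omega\}$ by a finite sequence of elementary hyperplane swaps, each of which visibly preserves these properties; this reduction is the main obstacle, and everything else is routine bookkeeping with collapse maps and the single-blowup recipe.
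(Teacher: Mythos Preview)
Your setup closely parallels the paper's: fix an identification $X\sslash\mathcal T\cong\SaG$, label the non-$\mathcal T$ hyperplanes by vertices of $\G$, and for each $W\in\mathcal T$ extract a candidate partition of $V^\pm$. The paper reads off the partition inside $X$ rather than in the quotient $X_W$: it observes that the cubes of $X$ all of whose edges are dual to hyperplanes in $\mathcal T$ form a CAT(0) subcomplex $\mathbb C$ containing every vertex of $X$, and each $W\in\mathcal T$ separates $\mathbb C$, hence partitions the vertices of $X$ and thereby $V^\pm$. Your description via the two-vertex complex $X_W$ and the paper's via $\mathbb C$ produce the same data.

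The substantive divergence is in your steps (i)--(iii). The paper does not give a self-contained verification either; it asserts the conclusion and refers to Section~4 of \cite{CSV} for complete details. What is done there is a \emph{direct} argument, not an exchange argument: one uses the cube-complex structure of $X$ (hyperplane intersections, links, the subcomplex $\mathbb C$) together with the hypothesis that $X$ already admits some blowup structure to verify the $\G$-Whitehead and compatibility conditions and to identify $X$ with $\SP$. Your proposed route---connecting an arbitrary treelike set to the standard one $\{H_\WP:\WP\in\Omega\}$ by elementary hyperplane swaps---would give a genuinely different proof if it works, but it trades the verification for a connectivity statement about treelike sets that is itself nontrivial to formulate and prove in this setting (the one-dimensional case, where treelike sets are maximal trees and the swap move is classical, does not obviously generalise). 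You have correctly identified this as the main obstacle; be aware that it is probably no shorter than the direct check, so you should either carry the swap argument through or fall back on the direct verification via $\mathbb C$.
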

\begin{proof}   We recall the construction.  For complete details see Section 4 of ~\cite{CSV}.

Label the edges dual to each hyperplane $H\in \mathcal T$ by  $H$.  Choose an isomorphism of $X\sslash\mathcal T$ with $\SaG$; this orients each hyperplane that is not in $\mathcal T$ and labels its dual edges by a vertex of $\G$.  The set of cubes in $X$ with all edge-labels in $\mathcal T$ forms a  CAT(0) subcomplex $\mathbb C$  that contains all vertices of $X$.  A hyperplane $H \in \mathcal T$ cuts $\mathbb C$ into two pieces, so partitions the vertices of $X$ into two sets,  $v_1(H)$ and $v_2(H)$. Now form a partition  $(\lk(H)| U_1|U_2)$ of $V^\pm$ as follows:
\begin{enumerate}
\item If the hyperplane $H_v$ labeled by $v$ intersects $H$, then $v$ and $v\inv$ are in $\lk(H).$
\item If $H_v\cap H =\emptyset$ and the terminal vertex of an edge dual to $H_v$ is in $v_i(H)$, then $v\in U_i$.
\item If $H_v\cap H=\emptyset$ and the initial vertex of an edge dual to $H_v$ is in $v_i(H)$ then $v\inv\in U_i$.
\end{enumerate}
Then the partition $(\lk(H)|U_1|U_2)$ is a \GW partition,   the set of \GW partitions for all $H\in\mathcal T$ is a    compatible collection $\Pi$, and $X$ is isomorphic to $\Sa_\G^\Pi$.
\end{proof}

We note that Condition (1) in the proof of Proposition~\ref{lem:procedure} is equivalent to saying that, in the universal cover $\widetilde \SP$, some lift of $H$ contains an axis for $v$. Saying that $v$ and $v\inv$ are in different $P_i$ is equivalent to saying that an axis for $v$ in $\widetilde \SP$ intersects some lift of $H$ transversally; in this case we say the axis  {\em skewers} $H$.    Saying $v$ and $v\inv$ are in the same $P_i$ is equivalent to saying that no axis for $v$ intersects any lift of $H$.

If we are given a special cube complex $X$ which we do not know a priori is a $\G$-complex, then to prove that  it is   we first need to find   an acyclic collection $\mathcal T$ of  hyperplanes which collapses to give  a cube complex isomorphic to $\SaG$. Choosing an isomorphism $X\sslash \mathcal{T} \cong \SaG$, gives  a labeling and orientation on all of the remaining hyperplanes.  We then need to check  that each hyperplane   $H\in \mathcal T$ determines a \GW partition.  We can do this   by collapsing all hyperplanes other than $H$ to get a complex with two vertices,   then checking whether  the location of the initial and terminal vertices of edges labeled by  $v\in\G$  gives a valid \GW partition.  {By Proposition~\ref{lem:procedure}, the partitions for one treelike set $\mathcal{T}$ are all \GW-partitions if and only if this holds for every treelike set.}

\subsection{Subdividing blowups}
In a blowup $\SP$ no two hyperplanes are parallel, \emph{i.e.} there are no  duplicate hyperplanes.   However, in the arguments that follow we will need to allow cubical subdivisions of blowups that result in  duplicate  hyperplanes.    Duplicating $H_\WP$  can be thought of as subdiviing its carrier $\kappa(H_\WP),$ and is  equivalent to adding a duplicate copy of $\WP$ to $\Pi$.  We want both of the  new hyperplanes we have created to be in the treelike set since we must collapse both to recover $\SaG$. Subdividing the carrier of $H_v$ is a little subtler; here we want only one of the two new hyperplanes to be added to the treelike set, so that collapsing the treelike set still gives $\SaG$.  In other words, when we duplicate $H_v$, we want  one of the two resulting hyperplanes to be labeled $H_v$, and the other to correspond to a partition. We also need the new $H_v$ to have the orientation induced from the old $H_v$.  This is accomplished by adding a ``singleton partition" to $\Pi$; this is a partition based at $v$ with one side containing only $v$ (if we want the initial segment of the dual edge to retain the $v$ label) or $v\inv$ (if we want the terminal segment to retain the $v$ label). To make this   a canonical operation, we can consistently use the singleton partition $\mathcal S_{v\inv}=(\lk^\pm(v)|\{v\inv\}|(V\setminus\lk(v))^\pm\setminus\{v\inv\})$, so that the terminal segment always retains the $v$ label.

Note that duplicate partitions fit the definition of ``compatible with each other,'' and a singleton partition is compatible with every \GW partition.   A set of pairwise compatible partitions that is allowed to have singletons and duplicates will be called a   {\em compatible multi-set}.    By the above remarks,   compatible multi-sets correspond to subdivided blowups.

\section{$U^0$-invariant subgraphs}\label{sec:invariant}\label{sec:U0invariant}

Recall that a {\em marking} on a $\G$-complex is a homotopy equivalence $h\colon X\to \SaG$.
Let $G$ be a finite group and $\rho\colon G\to \Out(\AG)$  a homomorphism.

\begin{definition} A marked $\G$-complex $(X,h)$ {\em realizes}  $\rho$  if  there is an action   $f\colon G\to \Aut(X)$ of $G$ on $X$ by cubical automorphisms  such that $h \circ f(g)\circ h\inv$ induces $\rho(g)$ on $\pi_1(\SaG)=\AG$ for all $g\in G$.
\end{definition}

Our goal in this paper is to build a marked $\G$-complex that realizes (the inclusion of) a finite subgroup $G < U^0(\AG)$.  Our approach is inductive.  Specifically, we will build our $\G$-complex by gluing together marked $\Delta$-complexes for subgraphs $\Delta$   which are {\em $U^0$-invariant}, in the sense that elements of $U^0(\AG)$ preserve the special subgroup $A_\Delta$ (up to conjugacy).  We will induct on the length of a maximal chain of $U^0$-invariant subgraphs.  In this section we prepare for this by establishing some basic facts about   $U^0$-invariant  subgraphs.

\begin{definition} Given $\phi\in \Out(\AG)$,  a subgraph $\Delta$ of $\Gamma$ is {\em  $\phi$-invariant}  if $\widehat\phi(A_\Delta)$ is conjugate to $A_\Delta$ for some lift $\widehat\phi$ of $\phi$ to $\Aut(\AG)$. Since any two such lifts differ by an inner automorphism, this is well-defined. A subgraph $\Delta$ is  {\em $U^0$-invariant} if it is $\phi$-invariant for every $\phi\in U^0(\AG)$.\end{definition}

 The next lemma gives a criterion for $U^0$-invariance.

\begin{lemma}\label{lem:InvariantSubgraph} Let $\Delta$ be a subgraph of $\Gamma$.  Then $\Delta$ is $U^0$-invariant if and only if the following two conditions hold for all $x,y\in \G$:
\begin{enumerate}
\item[(i)] if $x\in  \Delta$ and $\lk(x)\subseteq \lk(y)$ then $y\in \Delta$ and
\item [(ii)] if  $\Delta$ intersects more than one component of $\Gamma\setminus \st(y)$ then $y\in\Delta$.
\end{enumerate}
\end{lemma}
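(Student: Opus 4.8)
The plan is to prove both directions by unpacking what $U^0$-invariance means in terms of the generating set of $U^0(\AG)$, which by definition consists of Whitehead automorphisms (equivalently, folds and partial conjugations) and inversions. Since inversions obviously preserve every special subgroup $A_\Delta$ up to nothing (they preserve each generator's cyclic subgroup), the content is entirely about folds and partial conjugations. So I would first reduce the problem: $\Delta$ is $U^0$-invariant if and only if $A_\Delta$ is invariant (up to conjugacy) under every fold and every partial conjugation, since these generate $U^0(\AG)$ together with inversions and the conjugacy class of a subgroup is preserved under composition.

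For the ``only if'' direction I would argue contrapositively. Suppose (i) fails: there are $x\in\Delta$, $y\notin\Delta$ with $\lk(x)\subseteq\lk(y)$. Then $\lk(y) \supseteq \lk(x)$, and I want to produce an element of $U^0(\AG)$ moving $A_\Delta$ off its conjugacy class. The natural candidate is the fold $\varphi$ sending $x\mapsto yx$ (or $x\mapsto xy$); I need to check this is a genuine $\G$-Whitehead automorphism — this requires the partition $(\lk^\pm(y)\,|\,\{y,x\}\,|\,\text{rest})$ to be valid, which is where the condition $\lk(x)\subseteq\lk(y)$ (equivalently $x$ is split by a partition based at $y$, so $\lk(x)\subseteq\lk(y)$, as noted in the ``split'' discussion in the excerpt) gets used. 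Applying $\varphi$ to $A_\Delta$: the generator $x$ becomes $yx$, which is not conjugate into $A_\Delta$ because $y\notin\Delta$ (here I'd use a normal-form/retraction argument: $A_\Delta$ is a retract of $\AG$ via the map killing generators outside $\Delta$, and $yx$ maps to $y\neq 1$ under this retraction while lying in what would have to be a conjugate of $A_\Delta$, contradiction after checking conjugates of $A_\Delta$ are also detected). For (ii): if $\Delta$ meets two components $C_1, C_2$ of $\Gamma\setminus\st(y)$ with $y\notin\Delta$, take the partial conjugation $\gamma$ by $y$ on the component $C_1$; then $\gamma(A_\Delta)$ is generated by $A_{\Delta\cap C_1}$ conjugated by $y$ together with $A_{\Delta\setminus C_1}$, and since both pieces are nontrivial and $y\notin A_\Delta$, this subgroup is not conjugate to $A_\Delta$ — again a normal-form argument, noting that an element conjugating the whole thing back would have to simultaneously conjugate the $C_1$-part and fix the $C_2$-part.

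For the ``if'' direction, assume (i) and (ii) hold, and show $A_\Delta$ is preserved up to conjugacy by every fold and partial conjugation. For a fold $x\mapsto zx$: if $x\notin\Delta$ there is nothing to do (the fold fixes all generators of $A_\Delta$ unless it moves something in $\Delta$), so assume $x\in\Delta$; then since $\lk(x)\subseteq\lk(z)$ is forced for the fold to exist, condition (i) gives $z\in\Delta$, so $zx\in A_\Delta$ and the fold preserves $A_\Delta$ on the nose. For a partial conjugation $\gamma_{y,C}$ (conjugate the component $C$ of $\Gamma\setminus\st(y)$ by $y$): if $y\in\Delta$ then $\gamma_{y,C}$ sends $A_\Delta$ to a conjugate of itself inside $A_\Delta$ (in fact $\gamma_{y,C}$ restricted to $A_\Delta$ is an inner-by-$y$-on-part automorphism and $A_\Delta$ is $\gamma_{y,C}$-invariant since $y$ and all of $A_\Delta\cap C$ lie in $A_\Delta$); if $y\notin\Delta$ then by (ii) $\Delta$ meets at most one component of $\Gamma\setminus\st(y)$, so either $\Delta\cap C=\emptyset$ (and $\gamma_{y,C}$ fixes $A_\Delta$ pointwise) or $\Delta$ is entirely inside $C\cup\lk(y)\cap\Delta$ — here I need care: $A_\Delta$ could have some generators in $C$ and some in $\st(y)$. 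If $\Delta\subseteq C\cup(\st(y)\cap\Delta)$ and the $\st(y)\cap\Delta$ part... actually the clean statement is that $\gamma_{y,C}(A_\Delta) = y A_{\Delta\cap C} y\inv \cdot A_{\Delta\setminus C}$; when $\Delta\setminus C \subseteq \st(y)$ this equals $y\,A_\Delta\,y\inv$ after absorbing, giving conjugacy; (ii) is exactly what guarantees we are not in the genuinely bad case where $\Delta$ straddles two components outside $\st(y)$. I would organize this as a short case analysis.

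The main obstacle I anticipate is the bookkeeping in the ``if'' direction for partial conjugations when $y\notin\Delta$ but $\Delta$ has vertices both in a single component $C$ and in $\lk(y)$ — making sure that $\gamma_{y,C}(A_\Delta)$ really is conjugate (not merely isomorphic) to $A_\Delta$, which comes down to checking that the two pieces of $A_\Delta$ can be simultaneously conjugated back by $y\inv$, using that the $\lk(y)$-part commutes with $y$. A secondary subtlety is verifying rigorously that the automorphisms constructed in the ``only if'' direction genuinely fail to preserve the conjugacy class; for this I would lean on the retraction $A_\Delta \hookrightarrow \AG \twoheadrightarrow A_\Delta$ and the fact that a subgroup conjugate to a standard parabolic $A_\Delta$ which is also contained in $A_\Delta$ must equal $A_\Delta$ (a standard malnormality-type property of parabolics in RAAGs), rather than attempting normal-form computations by hand.
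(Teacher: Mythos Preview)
Your proposal is correct and follows essentially the same approach as the paper: reduce to generators (folds and partial conjugations), then match condition (i) to folds and condition (ii) to partial conjugations. The paper's own proof is quite terse---it simply asserts the equivalences without the retraction/parabolic arguments you outline for the ``only if'' direction, and it does not explicitly treat the case you flag (partial conjugation with $y\notin\Delta$ and $\Delta$ meeting both $C$ and $\lk(y)$)---so your version is if anything more careful, but the strategy is identical.
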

\begin{proof}  Since every subgraph $\Delta\subseteq \G$ is invariant under inversions, a subgraph $\Delta$ is $U^0$-invariant if and only if  it is invariant under the remaining generators of $U^0(\AG)$, {\em i.e.} all folds and partial conjugations. There is a fold  $\tau\colon x\mapsto xy$ if and only if $\lk(y)\supseteq \lk(x)$, so $\tau$ maps $x\in  \Delta$ to $A_\Delta$ if and only if  $y\in \Delta$.
If $\Delta$ intersects two different components $C$ and $C'$ of $\Gamma\setminus \st(y)$, then $A_\Delta$ is  sent to a conjugate of itself under the partial conjugation $C\mapsto yCy\inv$ if and only if $y\in\Delta$.
\end{proof}

\begin{proposition}\label{cor:applications} Let $\G$ be a simplicial graph.
\begin{enumerate}
\item If $\Sigma$ is a subgraph of $\Gamma$, then $\Delta=\lk(\Sigma)$ is $U^0$-invariant.
\item If $\Delta_1$ and $\Delta_2$ are two $U^0$-invariant subgraphs of $\Gamma$, then $\Delta_1\cap\Delta_2$ is $U^0$-invariant.
\item If $\Delta_1$ and $\Delta_2$ are two $U^0$-invariant subgraphs of $\Gamma$ whose join $\Delta_1*\Delta_2$ is also a subgraph, then $\Delta_1*\Delta_2$ is $U^0$-invariant.
\item  If $\Sigma$ is a non-singleton connected component of a $U^0$-invariant subgraph $\Delta$, then $\Sigma$ is $U^0$-invariant.
\item  If $\Delta$ is $U^0$-invariant and $N(\Delta)$ is the subgraph spanned by $\Delta$ and all vertices adjacent to  $\Delta$, then $N(\Delta)$ is $U^0$-invariant.
\end{enumerate}

\end{proposition}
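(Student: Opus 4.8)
My plan is to deduce all five statements directly from the combinatorial criterion of Lemma~\ref{lem:InvariantSubgraph}: a full subgraph $\Delta\subseteq\G$ is $U^0$-invariant if and only if (i) $x\in\Delta$ together with $\lk(x)\subseteq\lk(y)$ forces $y\in\Delta$, and (ii) $\Delta$ meeting more than one component of $\G\setminus\st(y)$ forces $y\in\Delta$. Two trivial observations get used repeatedly: any vertex lying in some component of $\G\setminus\st(y)$ is in particular not in $\st(y)$; and an edge of $\G$ both of whose endpoints lie outside $\st(y)$ is an edge of $\G\setminus\st(y)$. I would prove (2) first: if $x\in\Delta_1\cap\Delta_2$ and $\lk(x)\subseteq\lk(y)$, then (i) for each $\Delta_i$ gives $y\in\Delta_i$, hence $y\in\Delta_1\cap\Delta_2$; and two elements $a,b\in\Delta_1\cap\Delta_2$ lying in distinct components of $\G\setminus\st(y)$ are simultaneously witnesses for $\Delta_1$ and for $\Delta_2$, so (ii) gives $y\in\Delta_1\cap\Delta_2$.

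For (1), I would first check the single-vertex case: $\lk(v)$ is $U^0$-invariant because $x\in\lk(v)$ forces $v\in\lk(x)\subseteq\lk(y)$, hence $y\in\lk(v)$; and if $a,b\in\lk(v)$ lay in distinct components of $\G\setminus\st(y)$ then $v\in\st(y)$ (otherwise $a,v,b$ would connect them inside $\G\setminus\st(y)$), while $v=y$ is impossible since then $\lk(v)\subseteq\st(y)$ meets no component. Then (1) follows from (2), since $\lk(\Sigma)=\bigcap_{v\in\Sigma}\lk(v)$ (with $\lk(\emptyset)=\G$ trivially invariant). For (3), write $\Delta=\Delta_1*\Delta_2$; condition (i) is immediate because $x$ lies in one of the two factors, hence so does $y$. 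For (ii), two witnesses lying in the same factor are handled by (ii) for that factor; and the mixed case $a\in\Delta_1$, $b\in\Delta_2$ simply cannot occur, because $ab$ is a join edge with both endpoints outside $\st(y)$, so $a$ and $b$ lie in the \emph{same} component of $\G\setminus\st(y)$.

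Part (5) is similar in flavour. For (i): if $x\in\Delta$ then $y\in\Delta$ by invariance of $\Delta$; if $x$ is merely adjacent to some $\alpha\in\Delta$ then $\alpha\in\lk(x)\subseteq\lk(y)$, so $y$ is adjacent to $\alpha\in\Delta$ and hence in $N(\Delta)$. For (ii): given witnesses $a,b\in N(\Delta)$, pick $\alpha,\beta\in\Delta$ with $a\in\st(\alpha)$ and $b\in\st(\beta)$; if $\alpha\in\st(y)$ (or $\beta\in\st(y)$) then $\alpha$ equals or is adjacent to $y$, putting $y\in N(\Delta)$; otherwise $\alpha,\beta\notin\st(y)$, so $\alpha$ (resp. $\beta$) lies in the same component of $\G\setminus\st(y)$ as $a$ (resp. $b$), whence $\alpha,\beta$ witness that $\Delta$ meets two components and invariance of $\Delta$ gives $y\in\Delta\subseteq N(\Delta)$. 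Part (4) is where I expect the only genuine care. For (i): $x\in\Sigma$ and $\lk(x)\subseteq\lk(y)$ give $y\in\Delta$ by invariance of $\Delta$, and the non-singleton hypothesis furnishes a neighbour $x'\in\Sigma$ of $x$, so $x'\in\lk(x)\subseteq\lk(y)$ places $y$ in the component of $\Delta$ containing $x'$, namely $\Sigma$. For (ii): witnesses $a,b\in\Sigma$ give $y\in\Delta$, and any path from $a$ to $b$ inside $\Sigma$ must contain a vertex $w\in\st(y)$ (otherwise it would connect $a$ to $b$ inside $\G\setminus\st(y)$); then either $w=y\in\Sigma$, or $y$ is adjacent to $w\in\Sigma$ and so $y$ again lies in the component $\Sigma$ of $\Delta$.

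The whole argument is short once Lemma~\ref{lem:InvariantSubgraph} is available; the subtlest point is part (4), where the non-singleton hypothesis in (i) is exactly what supplies a neighbour of $x$ inside $\Sigma$, and in (ii) one must observe that a path inside $\Sigma$ joining the two witnesses necessarily passes through a vertex that equals or is adjacent to $y$.
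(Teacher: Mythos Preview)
Your proof is correct and follows essentially the same approach as the paper: both verify the two conditions of Lemma~\ref{lem:InvariantSubgraph} directly for each of the five statements. The only organizational difference is that for (1) you first treat the single-vertex case $\lk(v)$ and then deduce the general case from (2) via $\lk(\Sigma)=\bigcap_{v\in\Sigma}\lk(v)$, whereas the paper checks conditions (i) and (ii) for $\lk(\Sigma)$ directly in one pass; your handling of (5)(i) is also slightly more explicit than the paper's in separating the case $x\in\Delta$ (where invariance of $\Delta$ is invoked) from the case where $x$ is merely adjacent to $\Delta$.
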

\begin{proof} In each case we check conditions $(i)$ and $(ii)$ of Lemma~\ref{lem:InvariantSubgraph}:

Proof of (1)
\begin{itemize}
\item[(i)]   $x\in\Delta=\lk(\Sigma)$ if and only if $\Sigma\subseteq \lk(x)$.  If $\lk(x)\subseteq\lk(y)$ then $\Sigma\subset \lk(y)$ so    $y\in\lk(\Sigma)$.
\item[(ii)] Suppose that $\Delta=\lk(\Sigma)$ intersects two different components $C_1$ and $C_2$  of $\Gamma\setminus\st(y),$ say $x_1\in C_1\cap\Delta$ and $x_2\in C_2\cap\Delta$.  If $z\in\Sigma$ then $x_1$ and $x_2$ are both connected to $z$, so $z$ must be in $\lk(y)$.  Thus $\Sigma\subset \lk(y)$ so $y\in\lk(\Sigma)$.
\end{itemize}

Proof of (2)
\begin{itemize}
\item[(i)] Let $x\in\Delta_1\cap\Delta_2$.  If $\lk(y)\supseteq \lk(x)$ then $y\in\Delta_1$ by invariance of $\Delta_1$; similarly $y\in\Delta_2$, so $y$ is in the intersection.
\item[(ii)] If $\Delta_1\cap\Delta_2$ intersects two components of $\Gamma\setminus\st(y)$ then the same is true of both $\Delta_1$ and $\Delta_2$, so $y$ is in both.
\end{itemize}

Proof of (3)
\begin{itemize}
\item[(i)] If $x\in\Delta_1$ and $\lk(y)\supseteq\lk(x)$ then $y\in\Delta_1$; similarly if $x\in\Delta_2$.
\item[(ii)]  {Suppose $\Delta=\Delta_1*\Delta_2$ intersects two different components $C,C'$ of $\Gamma\setminus\st(y)$.  Let $x\in C\cap\Delta$ and $x'\in C'\cap\Delta$.  Then $x$ and $x'$ must be in the same $\Delta_i$, since otherwise there is an edge connecting them. But $\Delta_i$ is $U^0$-invariant, so $y\in \Delta_i\subset \Delta$. }  
\end{itemize}

Proof of (4)
\begin{itemize}
\item[(i)]   Suppose $x\in\Sigma$, and  $\lk(y)\supset\lk(x)$.  Since $\Sigma$ is not a singleton, $\lk(x)\cap \Sigma$ contains a vertex $z$.  Since $z$ is in the links of both  $x$ and $y$, $y$ is also in $\Sigma$.
\item[(ii)]  Suppose $x,z\in \Sigma$ are in different components of $\G\setminus \st(y)$.  Since $\Delta$ is $U^0$-invariant, this implies $y\in\Delta$, but then $\st(y)$ cannot separate $x$ from $z$ unless $y\in\Sigma$.
\end{itemize}

Proof of (5)
\begin{itemize}
\item[(i)] If $x\in N(\Delta)$ then either $x\in \Delta$ or $\lk(x)\cap \Delta\neq \emptyset$. If the distance from $y$ to $\Delta$ is at least 2, then $\lk(y)\cap \Delta=\emptyset$, so $\lk(x)\not \subseteq \lk(y)$.
\item[(ii)]   {If $y\not\in N(\Delta)$ then $\Delta\cap\, \st(y)=\emptyset$.} Since $\Delta$ is $U^0$-invariant, it  lies in a single component of $\G\setminus \st(y)$, so  {all} $x\in N(\Delta)\setminus \st(y)$ must lie in the same component.
\end{itemize}
\end{proof}

Recall from \cite{BrCV} that two vertices are called {\em fold eqivalent} if they have the same link in $\G$, and we order the set of fold equivalence classes by inclusion  of their links.  The following two propositions will allow us to establish the base case of our induction.

\begin{proposition}\label{lem:MinimalInvariant}  Let $\Delta\subseteq\Gamma$ be a
minimal $U^0$-invariant subgraph. Then $\Delta$ is a maximal fold equivalence class. In particular, $\Delta$ is discrete.
\end{proposition}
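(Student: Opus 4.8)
The plan is to identify the minimal $U^0$-invariant subgraph explicitly and then check its $U^0$-invariance using the criterion in Lemma~\ref{lem:InvariantSubgraph}. Since $\G$ is finite and $\Delta$ is nonempty, I would first choose $x\in\Delta$ whose link $\lk(x)$ is maximal among $\{\lk(z):z\in\Delta\}$. Condition~(i) of Lemma~\ref{lem:InvariantSubgraph} then shows $\lk(x)$ is in fact maximal among the links of \emph{all} vertices of $\G$: any $y$ with $\lk(y)\supseteq\lk(x)$ must lie in $\Delta$ by~(i), so by maximality within $\Delta$ we get $\lk(y)=\lk(x)$. Hence the fold equivalence class $C:=\{y\in\G:\lk(y)=\lk(x)\}$ is a \emph{maximal} fold equivalence class, and it is automatically discrete, since two vertices with equal link are never adjacent. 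Applying~(i) once more to $x\in\Delta$ shows that every vertex whose link contains $\lk(x)$ lies in $\Delta$, so $C\subseteq\Delta$. It therefore suffices to prove that $C$ is $U^0$-invariant, for then minimality of $\Delta$ forces $\Delta=C$.

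To show $C$ is $U^0$-invariant I would verify conditions~(i) and~(ii) of Lemma~\ref{lem:InvariantSubgraph}, writing $L=\lk(x)$. Condition~(i) is immediate: if $z\in C$ and $\lk(w)\supseteq\lk(z)=L$, then $\lk(w)=L$ by maximality of $L$, so $w\in C$. The substance is condition~(ii). Suppose $C$ meets two distinct components of $\G\setminus\st(w)$, with $a,b\in C$ in different components. Then $a,b\notin\st(w)$, so in particular $w\notin\lk(a)=L$. The key observation is that any $c\in L$ is adjacent to both $a$ and $b$ (because $\lk(a)=\lk(b)=L$), giving a length-two path from $a$ to $b$ through $c$; since $a$ and $b$ lie in different components of $\G\setminus\st(w)$ while $a,b\notin\st(w)$, this path must meet $\st(w)$, which forces $c\in\st(w)$. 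As $c\in L$ was arbitrary, $L\subseteq\st(w)$, and since $w\notin L$ this gives $L\subseteq\lk(w)$; maximality of $L$ then yields $\lk(w)=L$, i.e.\ $w\in C$. (The case $L=\emptyset$ is even easier: maximality of $L$ forces every vertex of $\G$ to have empty link, so $C$ is all of $\G$ and both conditions hold trivially.) This completes the verification, so $\Delta=C$ is a maximal fold equivalence class and in particular discrete.

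The main obstacle is condition~(ii) for the candidate $C$: a priori a fold equivalence class could be separated by the star of some vertex $w\notin C$, and one must rule this out. The resolution is the length-two path trick through the common link $L$, which pins down $w$ by forcing $L\subseteq\lk(w)$ and then invoking maximality of $L$. The only point requiring care is the degenerate case $L=\emptyset$ — corresponding to $\G$ being discrete — which is handled separately as noted above.
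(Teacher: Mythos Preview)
Your proof is correct and follows essentially the same strategy as the paper: show that any $U^0$-invariant subgraph contains a maximal fold equivalence class (via condition~(i) of Lemma~\ref{lem:InvariantSubgraph}), show that every maximal fold equivalence class is itself $U^0$-invariant, and conclude by minimality. The paper's proof is terse, asserting only that ``it is easy to check using Lemma~\ref{lem:InvariantSubgraph}'' that a maximal fold equivalence class is $U^0$-invariant; you supply this verification in full, including the length-two path argument for condition~(ii), which is exactly the intended check. (Your separate handling of $L=\emptyset$ is harmless but unnecessary: the inclusion $L\subseteq\lk(w)$ is vacuous in that case, and maximality of $L$ still forces $\lk(w)=\emptyset$.)
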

\begin{proof} If $[u]$ is a maximal fold equivalence class then   it is easy to check using Lemma~\ref{lem:InvariantSubgraph} that $[u]$ is $U^0$-invariant. On the other hand, if $\Delta\subseteq \G$ is any $U^0$-invariant subgraph then $\Delta$ contains a maximal equivalence class $[u]$ by condition (1) of Lemma~\ref{lem:InvariantSubgraph}, so  $[u]=\Delta$ by the minimality of $\Delta$. \
\end{proof}

 \begin{proposition}\label{prop:restriction} Let $\Delta$ be a $U^0$-invariant subgraph of $\G$.  Then there is a restriction homomorphism $r_\Delta\colon U^0(A_\G)\to U^0(A_\Delta)$.
\end{proposition}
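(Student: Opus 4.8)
The plan is to realize $r_\Delta$ by restricting, to $A_\Delta$, those automorphisms of $\AG$ that preserve $A_\Delta$, and then checking that this passes to outer automorphism groups and lands in $U^0(A_\Delta)$. Let $\Aut(\AG;A_\Delta)\le\Aut(\AG)$ denote the subgroup of $\psi$ with $\psi(A_\Delta)=A_\Delta$. Restriction $\psi\mapsto\psi|_{A_\Delta}$ is a homomorphism $\Aut(\AG;A_\Delta)\to\Aut(A_\Delta)$, and composing with the projection to $\Out(A_\Delta)$ gives a homomorphism $R$. Because $\Delta$ is $U^0$-invariant, every $\phi\in U^0(\AG)$ has a lift in $\Aut(\AG;A_\Delta)$: start from any lift $\widehat\phi_0\in\Aut(\AG)$, write $\widehat\phi_0(A_\Delta)=gA_\Delta g\inv$, and replace $\widehat\phi_0$ by $\mathrm{ad}_g\inv\circ\widehat\phi_0$. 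So one can attempt to define $r_\Delta(\phi)=R(\widehat\phi)$ for any such lift $\widehat\phi$.

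The heart of the matter is showing this is independent of the lift, i.e.\ that $r_\Delta$ is well defined. Two lifts of the same $\phi$ lying in $\Aut(\AG;A_\Delta)$ differ by $\mathrm{ad}_g$ for some $g\in\AG$, and $g$ must normalize $A_\Delta$; hence it suffices to prove that conjugation by any element of $N_{\AG}(A_\Delta)$ restricts to an inner automorphism of $A_\Delta$. This is the description of the normalizer of a parabolic subgroup of a RAAG: $N_{\AG}(A_\Delta)=A_\Delta\times A_{\lk(\Delta)}$, equivalently $A_\Delta\cdot Z_{\AG}(A_\Delta)$. I would deduce it from the standard computation of centralizers in RAAGs together with the fact that distinct full subgraphs of $\G$ give non-conjugate parabolics of $\AG$ — visible already in the abelianization $\Z^V$, where $gA_\Sigma g\inv$ has image the coordinate subspace on $\Sigma$ — so that no conjugating element can permute the free factors of $A_\Delta$; the connected case is immediate from the centralizer computation and the disconnected case reduces to it factor by factor. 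Given this, write $g=az$ with $a\in A_\Delta$ and $z\in Z_{\AG}(A_\Delta)$, so $\mathrm{ad}_g|_{A_\Delta}=\mathrm{ad}_a|_{A_\Delta}$ is inner; hence $r_\Delta\colon U^0(\AG)\to\Out(A_\Delta)$ is well defined. It is a homomorphism since the product of two chosen lifts is a lift of the product.

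It remains to check $r_\Delta(U^0(\AG))\subseteq U^0(A_\Delta)$, and since $U^0(A_\Delta)$ is a subgroup it is enough to evaluate $r_\Delta$ on generators. An inversion $\iota_v$ preserves $A_\Delta$ and restricts to $\iota_v$ if $v\in\Delta$ and to the identity otherwise. A fold $x\mapsto xy$ (with $\lk(x)\subseteq\lk(y)$) fixes $A_\Delta$ pointwise if $x\notin\Delta$; if $x\in\Delta$ then $y\in\Delta$ by Lemma~\ref{lem:InvariantSubgraph}(i), and it restricts to the fold $x\mapsto xy$ of $A_\Delta$, which is legitimate because $\lk_\Delta(x)=\lk(x)\cap\Delta\subseteq\lk(y)\cap\Delta=\lk_\Delta(y)$. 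For a partial conjugation by $y$ on a component $C$ of $\G\setminus\st(y)$: if $y\in\Delta$ then $\Delta\cap C$ is a union of components of $\Delta\setminus\st_\Delta(y)$, so the restriction is a partial conjugation (or a product of partial conjugations) of $A_\Delta$ by $y$; if $y\notin\Delta$, Lemma~\ref{lem:InvariantSubgraph}(ii) shows $\Delta$ meets at most one component of $\G\setminus\st(y)$, and after possibly post-composing with $\mathrm{ad}_y\inv$ (harmless in $\Out(\AG)$) the restriction to $A_\Delta$ is trivial. Since every Whitehead automorphism is a product of folds and partial conjugations, $r_\Delta$ carries $U^0(\AG)$ into $U^0(A_\Delta)$, as required.

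The step I expect to be the main obstacle is the well-definedness in the second paragraph — concretely, pinning down $N_{\AG}(A_\Delta)$ (or at least that normalizing elements act innerly on $A_\Delta$). If the normalizer of a parabolic subgroup is available as a citable fact, that paragraph collapses to a sentence and the rest is routine bookkeeping driven by Lemma~\ref{lem:InvariantSubgraph}.
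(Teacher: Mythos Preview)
Your proof is correct and follows essentially the same approach as the paper: define $r_\Delta$ via a lift preserving $A_\Delta$, use the normalizer computation $N_{\AG}(A_\Delta)=A_\Delta\times A_{\lk(\Delta)}$ (which the paper cites as Lemma~2.2 of \cite{CCV}) for well-definedness, and then verify on generators using Lemma~\ref{lem:InvariantSubgraph}. Your treatment of the partial conjugation case when $y\notin\Delta$ is in fact slightly more careful than the paper's, which glosses over the need to replace the standard lift by $\mathrm{ad}_y\inv\circ\widehat\phi$ when $\Delta$ meets the conjugated component.
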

\begin{proof} Let $\phi$ be an element of $U^0(\AG)$.  Since $\Delta$ is $U^0$-invariant there is a lift $\widehat\phi$  of $\phi$ to $\Aut^0(\AG)$ with $\widehat\phi(A_\Delta)=A_\Delta$.  Define $r_\Delta(\phi)$ to be the image in $\Out(A_\Delta)$ of the restriction of $\widehat\phi$ to $A_\Delta$.

To check that $r_\Delta$ is well-defined, suppose $\widehat\phi'$ is another lift  of $\phi$ sending $A_\Delta$ to itself.  Then $\widehat\phi'=\iota_g\circ\phi$ where $\iota_g$ is conjugation by some $g\in\AG$ that normalizes $A_\Delta$.    By Lemma 2.2 of ~\cite{CCV}  the normalizer of $A_\Delta$ is   $A_\Delta\times A_{\lk(\Delta)}$.  Since elements of $A_{\lk(\Delta)}$ act trivially by conjugation, $\widehat\phi'=\iota_h\circ \widehat\phi$ for some $h\in A_\Delta$, \emph{i.e.}   the images of $\widehat\phi$ and $\widehat\phi'$  in $\Out(A_\Delta)$ are equal.  Moreover, if $\phi_1,\phi_2$ are two elements of $U^0(\AG)$ and $\phi = \phi_1 \circ \phi_2$, then $\widehat\phi = \widehat\phi_1 \circ \widehat\phi_2$ is a lift of $\phi$ which preserves $A_\Delta$.  Thus  $r_\Delta\colon U^0(A_\G)\to \Out(A_\Delta)$ is a homomorphism.

To see that this lands in $U^0(A_\Delta)$, we check that this is the case for the generators of $U^0(\AG)$.  Let $\phi$ be a generator that lifts to a fold $\widehat \phi\colon x \mapsto xy$, so $\lk(x)\subset\lk(y)$.   If $x \notin \Delta$, then $\widehat\phi$  restricts to the identity on $A_\Delta$.   If $x \in \Delta$, then $U^0$-invariance of $\Delta$ implies that $y$ is also in $\Delta$, so the restriction is the lift of a fold in $U^0(A_\Delta)$.
Now suppose $\hat\phi$ is a partial conjugation by $x$.   If $x \notin \Delta$, then the fact that $\Delta$ is $U^0$-invariant, implies that $\Delta$ lies entirely in one component of $\Gamma \backslash \st_\Gamma(x)$, so the action of $
\widehat\phi$ to  $A_\Delta$ is trivial.   If $x \in \Delta$ then $\st_\Delta(x)  \subseteq \st_\Gamma(x)$ so the components of $\Delta \backslash \st_\Delta(x)$ are contained in components of $\Gamma \backslash \st_\Gamma(x)$.  Thus  $\widehat\phi$ restricts to a (product of) partial conjugations by $x$ on $A_\Delta$.
\end{proof}

\begin{definition} If $\Delta$ is a $U^0$-invariant subgraph  and $f\colon H\to U^0(A_\G)$ is any homomorphism, we call $f_\Delta=r_\Delta\circ f\colon H\to U^0(A_\Delta)$ the {\em restriction} of $f$ to $\Delta$.
\end{definition}

\section{ $U^0$-invariant subcomplexes of  marked  $\G$-complexes}\label{sec:XDelta}

Throughout this section we assume $\Delta\subseteq \G$ is a $U^0$-invariant subgraph  with $\lk(\Delta)=\emptyset$  and $(X,h)$ is $\G$-complex with an untwisted marking.  This means that for any  blowup structure $\Sa_\G^\Pi$ on $X$ with associated collapse map $c_\pi\colon  X\to \Sa_\G,$  the composition $c_\pi h\inv $ induces an untwisted automorphism of $\AG$,  that is, an element of $U(\AG)$.  The aim  is to identify a  subcomplex $X_\Delta\subset X$ which is invariant under the action of isometries that induce elements of $U^0(\AG)$.  {The reason for the restriction that $\lk(\Delta)=\emptyset$ is that in this case the subcomplex $X_\Delta$ is unique.   Remark~\ref{rmk:nonemptylink} discusses  the general case. }

The following lemma deals with the discrepancy between the subgroups $U(\AG)$ and $U^0(\AG)$.

\begin{lemma} If $X$ is a $\G$-complex with an untwisted marking $h$, then $X$ has a blowup structure $\SP$ with collapse map $c_\pi\colon X=\SP\to \SaG$ such that $(c_\pi h\inv )_*\in U^0(\AG)$.
\end{lemma}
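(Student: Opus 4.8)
The statement asserts that although an untwisted marking only guarantees $(c_\pi h\inv)_* \in U(\AG)$ for any blowup structure, one can always choose a blowup structure so that this automorphism actually lies in the smaller subgroup $U^0(\AG)$, i.e.\ uses no graph automorphism factor. The plan is to start with any blowup structure $\Sa_\G^{\Pi_0}$ on $X$, with collapse map $c_0\colon X\to\SaG$; by hypothesis $\psi_0 := (c_0 h\inv)_* \in U(\AG)$. Write $\psi_0 = \sigma\circ\phi_0$ where $\phi_0\in U^0(\AG)$ and $\sigma\in\Out(\AG)$ is induced by a graph automorphism of $\G$ (this is possible since $U(\AG)$ is the subgroup generated by $U^0(\AG)$ together with graph automorphisms, and $U^0(\AG)$ is normal, so every element of $U(\AG)$ can be written in this form). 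The goal is to modify the blowup structure so the new collapse map $c_1$ satisfies $(c_1 h\inv)_* = \phi_0 \in U^0(\AG)$.

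The key point is that the graph automorphism $\sigma$ acts on the Salvetti complex $\SaG$ by a cubical isometry $\bar\sigma\colon\SaG\to\SaG$ (indeed, as recalled in Section~\ref{sec:blowups}, the cubical isomorphisms of $\SaG$ are exactly the automorphisms of $\G$), and this isometry permutes hyperplanes and relabels oriented edges according to $\sigma$. The underlying cube complex of a blowup is unchanged by such a relabeling, so the treelike set $\mathcal T$ of partition-labeled hyperplanes in $X$ is the same; what changes is the identification $X\sslash\mathcal T\cong\SaG$, which we are free to compose with $\bar\sigma$. Concretely, I would apply Proposition~\ref{lem:procedure} with the same treelike set $\mathcal T$ but with the chosen isomorphism $X\sslash\mathcal T\cong\SaG$ replaced by its composition with $\bar\sigma\inv$. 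This produces a new compatible collection $\Pi_1$ (obtained from $\Pi_0$ by applying the signed permutation of vertices coming from $\sigma$, exactly as noted in the paragraph preceding Proposition~\ref{lem:procedure}), a new blowup structure $\Sa_\G^{\Pi_1}$ on the same cube complex $X$, and a new collapse map $c_1 = \bar\sigma\inv\circ c_0$.

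Then $(c_1 h\inv)_* = (\bar\sigma\inv)_* \circ (c_0 h\inv)_* = \sigma\inv\circ\psi_0 = \sigma\inv\circ\sigma\circ\phi_0 = \phi_0 \in U^0(\AG)$, as desired. (Here I am using that $\bar\sigma$ realizes $\sigma$ on $\pi_1(\SaG)=\AG$, and being a little careful about whether the correct adjustment is by $\sigma$ or $\sigma\inv$ — this is a bookkeeping matter settled by writing $\psi_0$ in the form $\sigma\circ\phi_0$ versus $\phi_0\circ\sigma$ and choosing the side to match.) The main obstacle, such as it is, is purely notational: one must verify that precomposing the collapse-identification with a graph-automorphism isometry of $\SaG$ genuinely yields another legitimate blowup structure on $X$ — i.e.\ that the new partition set $\Pi_1$ still consists of $\G$-Whitehead partitions — but this is exactly the content of the ambiguity described in Proposition~\ref{lem:procedure} and the remark following it, so it requires no new work. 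There is nothing deep here; the lemma is essentially an observation that the graph-automorphism ambiguity in choosing a blowup structure is precisely enough freedom to absorb the graph-automorphism part of the marking.
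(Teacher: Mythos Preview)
Your proposal is correct and follows essentially the same approach as the paper: both use the semidirect product decomposition $U(\AG)=U^0(\AG)\rtimes Q$ (with $Q$ a subgroup of $\Aut(\G)$) to write the change-of-marking automorphism as a graph-automorphism times an element of $U^0(\AG)$, and then absorb the graph-automorphism factor by relabeling the vertex-hyperplanes of the blowup via the corresponding isometry of $\SaG$, yielding a new blowup structure $\Sa_\G^{\alpha\Pi}$ on the same underlying cube complex. Your invocation of Proposition~\ref{lem:procedure} to justify that the relabeled structure is again a legitimate blowup is exactly what the paper does implicitly.
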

\begin{proof}  $U^0(\AG)$ is normal in the untwisted subgroup $U(\AG)$   of $\Out(\AG)$, and the quotient $Q$ is a subgroup of $\Aut(\G)$,  the group of graph automorphisms of $\G$.   The short exact sequence
$$1\to U^0(\AG)\to U(\AG)\to Q\to 1$$
splits, so $U(\AG)=U^0(\AG)  \rtimes Q$.

Let $\SP$ be any blowup structure on $X$ such that  $(c_\pi h\inv )_*$ is untwisted. By the above observation, if $(c_\pi h\inv )_*$ is not in $U^0(\AG)$ we can compose it with a graph automorphism $\alpha$ to produce an element of $U^0(\AG)$.  Realize $\alpha$ by an isometry $f_\alpha$ of $\Sa_\G$.
Composing $c_\pi$ with $f_\alpha$ is equivalent to changing the labels and orientations of the vertex-labeled hyperplanes in $\SP$ by $\alpha$; this changes the partitions by the same relabeling, giving a new set of partitions $\alpha\Pi$. In other words, this gives a blowup structure $\Sa_\G^{\alpha\Pi}$ on $X$ so that $(h\inv c_{\alpha\Pi})_*\in U^0(\AG).$
\end{proof}

Now fix a blowup structure $\SP$ on $X$ such that $(c_\pi h\inv )_* \in U^0(\AG)$. In any \GW partition all bases have the same link.  Since $\Delta$ is $U^0$-invariant, Lemma~\ref{lem:InvariantSubgraph} implies that either all bases of a partition in $\Pi$ are  in $\Delta$, or none are. So we may write $\bPi=\{\WQ_1,\ldots,\WQ_k,\WP_1,\ldots,\WP_\ell\}$, where the $\WQ_i$ are based in $\Delta$ and the $\WP_j$ are based in $\G\setminus\Delta$.

\begin{lemma} Each $\WP_i$ has a unique side $P_i^\Delta$ such that $\Delta^\pm\subset P^\Delta_i\cup\lk(\WP)$ and $\Delta^\pm\cap P^\Delta_i\neq \emptyset$.  Furthermore, if $\WP_i$ and $\WP_j$ are not adjacent, then $P_i^\Delta\cap P_j^\Delta\neq \emptyset$.
\end{lemma}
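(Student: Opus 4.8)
The plan is to handle the two assertions in turn: the first is a direct unpacking of the two conditions in Lemma~\ref{lem:InvariantSubgraph}, and the second is a short argument by contradiction resting on the first.

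\emph{Uniqueness of $P_i^\Delta$.} Fix $\WP=\WP_i$ with base $x\in\G\setminus\Delta$. First I would record that no vertex $v$ of $\Delta$ is split by $\WP$: being split forces $\lk(v)\subseteq\lk(x)$, whence condition~(i) of Lemma~\ref{lem:InvariantSubgraph} would put $x\in\Delta$, against hypothesis. Next, since $x\notin\Delta$ we have $\Delta\setminus\st(x)=\Delta\setminus\lk(x)$, and the contrapositive of condition~(ii) shows this set lies in a single component $C$ of $\G\setminus\st(x)$; it is nonempty, since $\Delta\subseteq\lk(x)$ would give $x\in\bigcap_{v\in\Delta}\lk(v)=\lk(\Delta)=\emptyset$. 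In the double $\G^\pm$, the vertices of $(\Delta\cap\lk(x))^\pm$ lie in the link part $\lk(\WP_i)$, while $(\Delta\setminus\st(x))^\pm\subseteq C^\pm$, and $C^\pm$ is connected inside $\G^\pm\setminus\lk^\pm(x)$: if $C$ has at least two vertices it contains an edge joining some $a$ to some $b$, the edge-path $a^{+}-b^{+}-a^{-}$ already identifies the lifts of $a$, and this spreads over all of $C^\pm$ by connectedness of $C$; if $C=\{v\}$ then $v\in\Delta$ is not split, so $v^{+}$ and $v^{-}$ lie on the same side. Consequently $(\Delta\setminus\st(x))^\pm$ is a nonempty subset lying on a single side $P_i^\Delta$, so $\Delta^\pm\subseteq\lk(\WP_i)\cup P_i^\Delta$ with $\Delta^\pm\cap P_i^\Delta\neq\emptyset$; the other side of $\WP_i$ is disjoint from $\lk(\WP_i)\cup P_i^\Delta$, giving uniqueness.

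\emph{Non-adjacent case.} Let $\WP_i,\WP_j$ be non-adjacent, with bases $x_i,x_j$; then $x_i$ and $x_j$ are distinct, non-adjacent vertices of $\G$. Put $Z_i=\Delta\setminus\st(x_i)$ and $Z_j=\Delta\setminus\st(x_j)$; by the first part these are nonempty with $Z_i^\pm\subseteq P_i^\Delta$ and $Z_j^\pm\subseteq P_j^\Delta$. Suppose for contradiction that $P_i^\Delta\cap P_j^\Delta=\emptyset$. Then $Z_i\cap Z_j=\emptyset$, i.e.\ every vertex of $\Delta$ lies in $\lk(x_i)\cup\lk(x_j)$, so in particular $Z_i\subseteq\lk(x_j)$. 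Pick $v\in Z_i$; then $v$ is adjacent to $x_j$, and $v$ and $x_j$ both lie in $\G\setminus\st(x_i)$, so $x_j$ lies in the component $C_i$ of $\G\setminus\st(x_i)$ containing $Z_i$. That component contains the edge joining $v$ and $x_j$, hence $C_i^\pm$ is connected and lies on the single side $P_i^\Delta$ containing $Z_i^\pm$; thus $x_j,x_j\inv\in P_i^\Delta$, and by disjointness neither of them lies in $P_j^\Delta$. But $x_j$ and $x_j\inv$ lie on opposite sides of $\WP_j$, so one of them lies in $P_j^\Delta$ — a contradiction. Hence $P_i^\Delta\cap P_j^\Delta\neq\emptyset$.

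The only point needing care is the passage between connectivity in $\G\setminus\st(x)$ and in the double $\G^\pm\setminus\lk^\pm(x)$: when the relevant component of $\Delta\setminus\st(x)$ degenerates to a single vertex $v$, the lifts $v^{+}$ and $v^{-}$ are genuinely distinct components of the double, and it is precisely the observation that $\Delta$ contains no split vertex that keeps them on the same side. Everything else is routine bookkeeping with the definitions of $\G$-partition, of adjacency of partitions, and of the two invariance conditions.
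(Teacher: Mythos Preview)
Your proof is correct. The first part follows the same line as the paper's proof—use $\lk(\Delta)=\emptyset$ to get $\Delta\setminus\st(x)\neq\emptyset$, then use $U^0$-invariance to confine $\Delta$ to a single component—though you are more careful than the paper about the passage from components of $\G\setminus\st(x)$ to components of $\G^\pm\setminus\lk^\pm(x)$, and you make explicit the no-split observation needed when the component is a singleton.

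For the second assertion you take a genuinely different route. The paper argues by invoking an external lemma (Lemma~2.9 of \cite{BrCV}): for compatible non-adjacent partitions, if $P_i^\Delta\cap P_j^\Delta=\emptyset$ then also $P_i^\Delta\cap\lk(\WP_j)=\emptyset=P_j^\Delta\cap\lk(\WP_i)$, which together with $\Delta^\pm\subseteq P_j^\Delta\cup\lk(\WP_j)$ forces $\Delta^\pm\cap P_i^\Delta=\emptyset$, contradicting the first part. Your argument is instead entirely self-contained: from $P_i^\Delta\cap P_j^\Delta=\emptyset$ you deduce $Z_i\subseteq\lk(x_j)$, then use the edge $v$--$x_j$ to drag $x_j^\pm$ into the component $C_i^\pm\subseteq P_i^\Delta$, contradicting the fact that $x_j$ is split by $\WP_j$. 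The paper's version is shorter once one has the cited lemma in hand; yours avoids the external reference and in fact never uses compatibility of $\WP_i$ and $\WP_j$, only non-adjacency.
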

\begin{proof} Let $y_i\in\G\setminus \Delta$ be a base for $\WP_i$.  Recall that we have assumed $\lk(\Delta)$ is empty; this implies that $\Delta\setminus \st(y_i)$ is nonempty.    Since $\Delta$ is $U^0$-invariant,  it intersects at most one component of $\Gamma\setminus \st(y_i)$.
Each side of $\WP_i$ is a union of components of $\Gamma\setminus\st(y_j)$. Thus, there is a unique side $P_i^\Delta$ such that  $\Delta^\pm\cap P_i^\Delta\neq \emptyset$ and $\Delta^\pm\subseteq P_i^\Delta \cup \lk(\WP_i)^\pm$.

If $\WP_i$ and $\WP_j$ are compatible   but not adjacent and  $P_i^\Delta\cap P_j^\Delta=\emptyset$ then $P_i^\Delta\cap \lk(\WP_j)=P_j^\Delta\cap \lk(\WP_i)=\emptyset$ as well (see Lemma 2.9 of \cite{BrCV}), forcing $\Delta^\pm\subset \lk(\WP_i)\cap\lk(\WP_j)$.  This contradicts our assumption that $\Delta^\pm\cap P_i^\Delta\neq\emptyset.$
\end{proof}

 Recall from~\cite{CSV} that vertices of $\Sa_\G^\Pi$ correspond to collections $\{Q_1^\times,\ldots,Q_k^\times,P_1^\times,\cdots, P_\ell^\times\}$, where the superscript $\times$ indicates a choice of side.  Each pair of sides in the collection must be {\em consistent},  which means either they intersect non-trivially or their bases  commute.
We define $\mathbb K_\Delta$ to be the subcomplex of $\SP$ consisting of vertices of the form $\{Q_1^\times,\ldots,Q_k^\times,P_1^\Delta,\cdots, P_\ell^\Delta\}$, edges obtained by switching a side of some $\WQ_i$ or labeled by some $v\in\Delta$, and all higher-dimensional cubes spanned by these edges.

\begin{figure}
  \begin{tikzpicture}[scale=.8]
   \begin{scope}[decoration={markings,mark = at position 0.5 with {\arrow{stealth}}}]
  \coordinate (c) at (2,1); \coordinate (d) at (2,-1); \coordinate (a) at (1,0);\coordinate (b) at (3,0);  \coordinate (e) at (-.5,0); \coordinate (f) at (4.5,0);
\fill [blue!15] (2,0) ellipse (1.5cm and 1.5cm);
 \vertex{(a)};\vertex{(b)};\vertex{(c)};\vertex{(d)};\vertex{(e)};\vertex{(f)};
  \coordinate (ci) at (-1,-0); \coordinate (ai) at (0,0); \coordinate (di) at (1,0);\coordinate (bi) at (2,0);
 \node [above left](A) at (a) {$a$};
  \node [above right](B) at (b) {$b$};
    \node [above right](C) at (c) {$c$};
      \node [ right](D) at (d) {$d$};
      \node [above left](E) at (e) {$e$};
      \node [above right](F) at (f) {$f$};
      \node (Delta) at (2,0) {$\Delta$};
\draw [thick] (e) to (a) to (c) to (b) to (f); \draw[thick] (a) to (d) to (b);
\node at (2,-4) {$(a)$};
\begin{scope}[xshift = 2cm]
\draw[rounded corners, densely dotted] (5,2.8) to (5,.25) to (7,.25) to (7,2.8) to  (5,2.8) to (5,.5)--cycle;
 \coordinate (ap) at (5.5,2); \coordinate (bp) at (6.5,2); \coordinate (cp) at (8,2);\coordinate (dp) at (9,2);  \coordinate (ep) at (10,2); \coordinate (fp) at (11,2);
 \vertex{(ap)};\vertex{(bp)};\vertex{(cp)};\vertex{(dp)};\vertex{(ep)};\vertex{(fp)};
  \node [above](A) at (ap) {$a$};
  \node [above](B) at (bp) {$b$};
    \node [above](C) at (cp) {$c$};
      \node [above](D) at (dp) {$d$};
      \node [above](E) at (ep) {$e$};
      \node [above](F) at (fp) {$f$};
  \coordinate (am) at (5.5,1); \coordinate (bm) at (6.5,1); \coordinate (cm) at (8,1);\coordinate (dm) at (9,1);  \coordinate (em) at (10,1); \coordinate (fm) at (11,1);
   \vertex{(am)};\vertex{(bm)};\vertex{(cm)};\vertex{(dm)};\vertex{(em)};\vertex{(fm)};
     \node [below](Ai) at (am) {$\bar a$};
  \node [below](Bi) at (bm) {$\bar b$};
    \node [below](Ci) at (cm) {$\bar c$};
      \node [below](Di) at (dm) {$\bar d$};
      \node [below](Ei) at (em) {$\bar e$};
      \node [below](Fi) at (fm) {$\bar f$};
\draw [rounded corners, blue] (8.75,1.75) to (8.75,2.4) to (10.25,2.4) to (10.25,1.75) to (8.75,1.75)--cycle; 
\draw [rounded corners, blue]  (7.75,2.5) to (7.75,1.6) to (9.6,1.6) to (9.6,.75 ) to (10.35,.75) to (10.35,2.5) to (7.75,2.5) to (7.75,1.6)--cycle; 
\draw [rounded corners, blue]  (7.5,2.65) to (7.5,1.45) to (9.3,1.45) to (9.3,.6 ) to (10.55,.6) to (10.55,1.6) to (11.25,1.6) to (11.25,2.65) to (7.5,2.65) to (7.5,1.45)--cycle;
\draw [rounded corners, blue]  (7.25,2.8) to (7.25,1.3) to (8.5,1.3) to (8.5,.45 ) to  (10.7,.45) to  (10.7,1.45) to  (11.4,1.45) to (11.4,2.8) to (7.25,2.8) to (7.25,1.3)--cycle; 
\end{scope}
\begin{scope}[xshift=2cm, yshift=-3cm]
\draw[rounded corners, densely dotted] (5,2.8) to (5,.25) to (7,.25) to (7,2.8) to  (5,2.8) to (5,.5)--cycle;
\coordinate (ap) at (5.5,2); \coordinate (bp) at (6.5,2); \coordinate (cp) at (8,2);\coordinate (dp) at (9,2);  \coordinate (ep) at (10,2); \coordinate (fp) at (11,2);
\vertex{(ap)};\vertex{(bp)};\vertex{(cp)};\vertex{(dp)};
  \node [above](A) at (ap) {$a$};
  \node [above](B) at (bp) {$b$};
    \node [above](C) at (cp) {$c$};
      \node [above](D) at (dp) {$d$};
  \coordinate (am) at (5.5,1); \coordinate (bm) at (6.5,1); \coordinate (cm) at (8,1);\coordinate (dm) at (9,1);  \coordinate (em) at (10,1); \coordinate (fm) at (11,1);
   \vertex{(am)};\vertex{(bm)};\vertex{(cm)};\vertex{(dm)};
     \node [below](Ai) at (am) {$\bar a$};
  \node [below](Bi) at (bm) {$\bar b$};
    \node [below](Ci) at (cm) {$\bar c$};
      \node [below](Di) at (dm) {$\bar d$};
\draw [rounded corners, blue] (8.75,1.75) to (8.75,2.4) to (9.25,2.4) to (9.25,1.75) to (8.75,1.75)--cycle;
\draw [rounded corners, blue]  (7.75,2.5) to (7.75,1.6) to (9.5,1.6)  to (9.5,2.5) to (7.75,2.5) to (7.75,1.6)--cycle;
\draw [rounded corners, blue]  (7.5,2.65) to (7.5,1.45) to (9.65,1.45) to (9.65,2.65) to (7.5,2.65) to (7.5,1.45)--cycle;
\draw [rounded corners, blue]  (7.25,2.8) to (7.25,1.3) to (8.5,1.3) to (8.5,.45 ) to  (9.8,.45) to (9.8,2.8) to (7.25,2.8) to (7.25,1.3)--cycle;
\end{scope}
\node at (10,-4) {$(b)$};
\end{scope}\end{tikzpicture}
\caption{(a) A graph $\Gamma$ and $U^0$-invariant subgraph $\Delta$. (b) Four different $\G$-partitions with the same link and their restrictions to $\Delta^\pm$. In the restriction, the innermost and outermost partitions become  trivial and the middle two become equal.}\label{fig:partitions}
\end{figure}
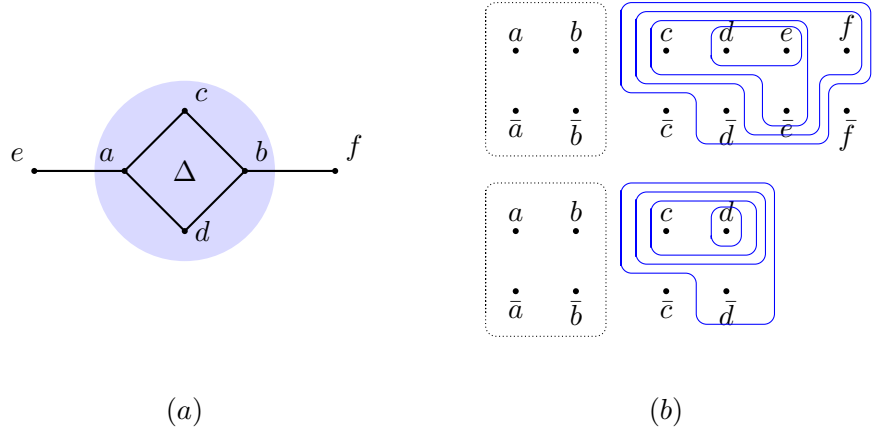

 \begin{proposition}\label{prop:LinklessSubcomplex}
  The subcomplex   $\K_\Delta$  is a subdivided blowup of $\Sa_\Delta$.
 \end{proposition}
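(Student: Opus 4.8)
The plan is to exhibit an explicit compatible multi-set $\Omega$ of $\Delta$-partitions so that $\K_\Delta$ is isometric to $\Sa_\Delta^\Omega$ (a subdivided blowup of $\Sa_\Delta$). The candidate for $\Omega$ is built from the partitions in $\Pi$ as follows. For each $\WQ_i$ based in $\Delta$, restrict $\WQ_i$ to $\Delta^\pm$; since the bases of $\WQ_i$ lie in $\Delta$ and $\lk(\Delta)=\emptyset$, one checks (using that a vertex split by $\WQ_i$ has link contained in $\lk(\WQ_i)$, together with $U^0$-invariance of $\Delta$ via Lemma~\ref{lem:InvariantSubgraph}) that the restriction is again a valid $\Delta$-partition $\WQ_i^\Delta$, possibly a singleton. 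For each $\WP_j$ based outside $\Delta$, the previous lemma gave a distinguished side $P_j^\Delta$ with $\Delta^\pm\subseteq P_j^\Delta\cup\lk(\WP_j)$, so $\WP_j$ restricts to a \emph{trivial} partition of $\Delta^\pm$ and contributes nothing to $\Omega$ — this is exactly why fixing the side $P_j^\Delta$ in the definition of $\K_\Delta$ is the right normalization. So $\Omega=\{\WQ_1^\Delta,\ldots,\WQ_k^\Delta\}$, with singletons allowed; I should note that duplicates may also appear (two distinct $\WQ_i$ can have equal restrictions, cf. Figure~\ref{fig:partitions}), which is why a compatible \emph{multi-set} rather than a set is the correct target, matching the "subdivided blowup" in the statement.

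The main steps are then: (1) Verify that $\Omega$ is a compatible multi-set — pairwise compatibility of the $\WQ_i^\Delta$ follows from compatibility of the $\WQ_i$ in $\G$: adjacency is inherited since commuting bases in $\Delta$ still commute, and the disjoint-sides condition descends because intersecting a side with $\Delta^\pm$ preserves disjointness; singletons are compatible with everything by the remarks at the end of Section~\ref{sec:blowups}. (2) Identify the vertex sets: vertices of $\Sa_\Delta^\Omega$ are consistent choices $\{(Q_1^\Delta)^\times,\ldots,(Q_k^\Delta)^\times\}$ of sides, and I claim the map sending such a choice to the vertex $\{Q_1^\times,\ldots,Q_k^\times,P_1^\Delta,\ldots,P_\ell^\Delta\}$ of $\SP$ is a bijection onto the vertex set of $\K_\Delta$. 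Surjectivity and injectivity are essentially the definition of $\K_\Delta$; the content is that consistency in $\Delta$ matches consistency in $\G$ — for the $Q$-sides this is immediate, and one must check that $\{Q_1^\times,\ldots,Q_k^\times,P_1^\Delta,\ldots,P_\ell^\Delta\}$ is genuinely a vertex of $\SP$, i.e. each $Q_i^\times$ is consistent with each $P_j^\Delta$. This last point is where the previous lemma is used again: $P_j^\Delta$ contains $\Delta^\pm\cap P_j^\Delta\neq\emptyset$, and $Q_i^\times$ meets $\Delta^\pm$, so either their bases commute or the two sides intersect inside $\Delta^\pm$; a short case analysis on whether the base of $Q_i$ lies in $\st(y_j)$ closes this. (3) Identify edges and higher cubes: edges of $\K_\Delta$ are by definition those that switch a side of some $\WQ_i$ or are labeled by $v\in\Delta$, which is exactly the edge set of $\Sa_\Delta^\Omega$ (side-switches of $\Omega$-partitions and $\Delta$-labeled edges), and cubes are spanned by commuting such edges in both complexes; so the vertex bijection extends to a cubical isomorphism.

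The step I expect to be the main obstacle is (2), specifically checking that the proposed vertices actually lie in $\SP$ and that no vertices of $\SP$ of the required form are missed — in other words, that fixing the $P_j^\Delta$ side on \emph{every} non-$\Delta$ partition simultaneously is consistent. One has to rule out the possibility that for some vertex of $\SP$ lying "over" a $\Delta$-side-configuration, consistency forces a $\WP_j$ onto its other side; this is handled by the inequality $P_i^\Delta\cap P_j^\Delta\neq\emptyset$ for non-adjacent $\WP_i,\WP_j$ from the previous lemma (so the $P_j^\Delta$ are mutually consistent), together with the consistency of $P_j^\Delta$ with each $Q_i^\times$ established above. A secondary subtlety is bookkeeping the singleton/duplicate partitions correctly so that the collapse $\Sa_\Delta^\Omega\to\Sa_\Delta$ really does recover the Salvetti for $\Delta$ (each singleton contributes a harmless subdivision of an $H_v$, $v\in\Delta$), which is what makes "subdivided blowup of $\Sa_\Delta$" precise; this follows from the discussion of subdividing blowups at the end of Section~\ref{sec:blowups}.
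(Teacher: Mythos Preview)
Your proposal is correct and follows essentially the same route as the paper's proof: define $\Omega$ as the multi-set of restrictions $\WQ_i\cap\Delta^\pm$, observe that compatibility descends, and then identify $\Sa_\Delta^\Omega$ with $\K_\Delta$ via the map $\{(Q_i^\Delta)^\times\}\mapsto\{Q_i^\times,P_j^\Delta\}$. The one place where the paper's argument is crisper than your sketch is the consistency of $P_j^\Delta$ with an arbitrary side $Q_i^\times$: rather than a case analysis, the paper simply notes that the base $m$ of $\WQ_i$ lies in $\Delta$, so either $m\in\lk(\WP_j)$ (adjacency) or $m,m^{-1}\in P_j^\Delta$; since $\WQ_i$ splits $m$, both sides of $\WQ_i$ meet $P_j^\Delta$, and consistency is automatic for either choice of side.
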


\begin{proof} Let $\{\WQ_1,\ldots,\WQ_k\}$ be the partitions of $\bPi$ based in $\Delta$,  let $\Omega$   be  the multi-set of $\Delta$-partitions   obtained by intersecting the $\WQ_i$  with $\Delta^\pm$ (see Figure~\ref{fig:partitions}), and let $\Sa_\Delta^{\Omega}$ be the corresponding subdivided blowup.   {If the bases of $\WQ_i$ and $\WQ_j$ don't commute, then exactly one pair of   sides has empty intersection (see \cite{CSV}, Lemma 3.6).  The $\WQ_i \cap \Delta^\pm$ are $\Delta$-partitions, and the corresponding pair of  sides still has empty intersection.  This means that they are compatible, and all other pairs of sides must intersect.}  Since the $\WQ_i$ are based in $\Delta$,  this means that $\{Q_1^\times,\ldots,Q_k^\times\}$ is consistent in $\G^\pm$ if and only if $\{ Q_{1}^\times\cap\Delta^\pm,\ldots, Q_{k}^\times\cap\Delta^\pm\}$ is consistent in $\Delta^\pm$, \emph{i.e.} defines a vertex of $\Sa_\Delta^{\Omega}$.  Note that for each $j=1,\ldots k$, either $\WP_i$ is adjacent to $\WQ_j$ or $P_i^\Delta$ intersects both sides of  $\WQ_j$ nontrivially, since $\WQ_j$ splits its own base, which is in $\Delta\setminus \lk(\WP_i)$. Therefore the map sending $$\{ Q_{1}^\times\cap\Delta^\pm,\ldots, Q_{k}^\times\cap\Delta^\pm\}  \to
 \{Q_1^\times,\ldots,Q_k^\times,P_1^\Delta,\cdots, P_\ell^\Delta\}$$ is well-defined, and induces an isomorphism of $\Sa_\Delta^{\Omega_\Delta}$ with $\mathbb K_\Delta$.
\end{proof}

A priori our marking $h\colon X\to\SaG$  maps $\K_\Delta$ to $\Sa_\G$, but we can adjust it within its homotopy class to map $\K_\Delta$ to  $\Sa_\Delta$.  This is because $h=u\circ c_\pi$, where $u:\SaG\to\SaG$ induces an element $\phi\in U^0(\AG)$.  We can choose a representative $\hat\phi\in\Aut(\AG)$  that sends $A_\Delta$ to itself, so adjusting $u$ by a homotopy we get  a map sending $\Sa_\Delta$ to itself.  Since $c_\pi$ sends $\K_\Delta$ to $\Sa_\Delta$, we may assume the composition $h=u\circ c_\pi$ restricts to a marking on $\K_\Delta$.

We constructed the subcomplex $\K_{\Delta}\subset X$ using the blowup structure  that we chose on $X$.
We now  show that $\K_{\Delta}$ is  essentially  independent of this choice.

 \begin{proposition}\label{prop:XDelta}  The subcomplex $\K_\Delta$ is independent of the blowup structure $\Sa_\G^\Pi$ as long as this blowup structure satisfies  $(c_\pi h\inv)_*\in U^0(\AG)$.
   \end{proposition}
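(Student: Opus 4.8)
The plan is to show that $\K_\Delta$ can be characterized intrinsically, without reference to the chosen blowup structure. The key invariant description should be: $\K_\Delta$ is the smallest subcomplex of $X$ that contains a characteristic cycle for every $v\in\Delta$ and is ``closed'' under the combinatorial moves available inside $X$ (switching sides of partitions based in $\Delta$, and spanning higher cubes). Concretely, I would argue that a cube $c$ of $X$ lies in $\K_\Delta$ if and only if every edge of $c$ is dual to a hyperplane $H$ which is either labelled by a vertex of $\Delta$ or is the hyperplane $H_{\WQ}$ of a partition $\WQ$ based in $\Delta$ — and that this ``based in $\Delta$'' property of a hyperplane is itself independent of the blowup structure. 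The latter is the heart of the matter.

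First I would isolate the relevant notion of invariance. Given two blowup structures $\Sa_\G^\Pi$ and $\Sa_\G^{\Pi'}$ on $X$, both with $(c_\pi h\inv)_*\in U^0(\AG)$, their treelike sets $\mathcal T$ and $\mathcal T'$ may differ, but by Proposition~\ref{lem:procedure} each is a treelike set and the two markings differ by an element of $U^0(\AG)$. The subgroup $U^0(\AG)$ acts on the set of blowup structures; I would show it acts transitively on those with $(c_\pi h\inv)_*\in U^0(\AG)$, or at least that any two are connected by a sequence of elementary moves (hyperplane duplications/collapses and sliding a partition across a commuting one). Then it suffices to check that $\K_\Delta$ is unchanged under each elementary move. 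For a duplication of $H_v$ (subdividing the carrier, adding a singleton partition $\mathcal S_{v\inv}$): if $v\in\Delta$, both resulting hyperplanes are labelled by vertices of or partitions based in $\Delta$, so the new cubes added are precisely the subdivision of cubes already in $\K_\Delta$; if $v\notin\Delta$, neither new hyperplane touches $\Delta$ and $\K_\Delta$ is subdivided trivially — i.e. unchanged up to the canonical subdivision. For duplicating $H_\WP$ the analysis is the same using whether $\WP$ is based in $\Delta$ (Lemma~\ref{lem:InvariantSubgraph} guarantees ``all bases in $\Delta$ or none''). For sliding a partition $\WP$ across an adjacent $\WQ$, the underlying cube complex does not change at all, so there is nothing to check beyond confirming that ``based in $\Delta$'' is preserved, which again follows since a slide replaces $\WP$ by a $\G$-Whitehead partition with the same link, hence the same base-equivalence class.

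The cleanest route, which I would try first, is to give the intrinsic characterization directly and avoid enumerating moves. Define, for each $v\in\Delta$, the set $\mathcal H_v$ of hyperplanes $H$ of $X$ such that in the universal cover some lift of $H$ is skewered by an axis of $v$ (this uses the interpretation noted after Proposition~\ref{lem:procedure}, which is manifestly blowup-structure-independent since it refers only to the action of $\AG$ on $\widetilde X$ and the marking, and markings differing by $U^0(\AG)$ preserve $\Delta$ up to conjugacy). Let $\mathcal H_\Delta=\bigcup_{v\in\Delta}\mathcal H_v$ together with the hyperplanes $H_v$ for $v\in\Delta$. Then I claim $\K_\Delta$ is exactly the union of cubes all of whose dual hyperplanes lie in $\mathcal H_\Delta$. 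In one direction: every $\WQ_i$ based in $\Delta$ splits its own base $v\in\Delta$, so $H_{\WQ_i}\in\mathcal H_v\subset\mathcal H_\Delta$, and the vertex hyperplanes $H_v$ with $v\in\Delta$ are in by definition, so every cube of $\K_\Delta$ has all dual hyperplanes in $\mathcal H_\Delta$. Conversely, a hyperplane in $\mathcal H_\Delta$ corresponds (in the fixed blowup structure) either to $H_v$ with $v\in\Delta$, or to a partition splitting some $v\in\Delta$; by Lemma~\ref{lem:InvariantSubgraph}(i) (since $\lk(v)\subseteq\lk(\text{base})$ forces the base into $\Delta$) such a partition is one of the $\WQ_i$, and a cube with all dual hyperplanes of this form is, by the description of vertices of $\SP$ in Proposition~\ref{prop:LinklessSubcomplex}, a cube of $\K_\Delta$. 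Since the left-hand side of this characterization depends only on $X$, $h$ (up to $U^0(\AG)$), and $\Delta$, we conclude $\K_\Delta$ is independent of the blowup structure.

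The main obstacle I anticipate is verifying that the skewering characterization of which hyperplanes ``belong to $v$'' really is insensitive to the choice of blowup structure, and in particular reconciling the fixed-base viewpoint (where a partition is ``based in $\Delta$'') with the axis-theoretic viewpoint (which edges/hyperplanes an axis of $v$ crosses transversally). The subtlety is that an axis of $v$ may skewer a hyperplane $H_\WP$ for a partition $\WP$ that \emph{splits} $v$ but is based elsewhere; Lemma~\ref{lem:InvariantSubgraph}(i) is precisely what rules this out when $\Delta$ is $U^0$-invariant, but one must be careful that the marking used to define ``axis of $v$'' on $X$ is consistently the $U^0$-normalized one — this is where the hypothesis $(c_\pi h\inv)_*\in U^0(\AG)$ does its work, guaranteeing that conjugacy classes of elements of $A_\Delta$ are carried to conjugacy classes of $A_\Delta$, so the collection of relevant axes (hence $\mathcal H_\Delta$) is genuinely canonical. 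Once that is pinned down, the remaining verifications are the routine cube-by-cube bookkeeping sketched above.
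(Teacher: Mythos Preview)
Your skewering idea is the right ingredient, and it is essentially what the paper uses, but the specific characterization you propose is incorrect. You claim that $\K_\Delta$ equals the union of cubes all of whose dual hyperplanes lie in $\mathcal H_\Delta$. This set is strictly larger than $\K_\Delta$. For instance, take any edge of $X=\SP$ dual to $H_{\WQ_1}$: such an edge connects two vertices that differ only in the side chosen for $\WQ_1$, and these vertices may have \emph{any} consistent choice of sides $P_1^\times,\dots,P_\ell^\times$ for the partitions based outside $\Delta$, not just the distinguished sides $P_j^\Delta$. Thus there are edges (and higher cubes) dual only to $\mathcal H_\Delta$-hyperplanes scattered throughout $X$, one parallel copy for every consistent choice of the $P_j^\times$. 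Knowing which hyperplanes \emph{meet} $\K_\Delta$ does not locate $\K_\Delta$ inside $X$; what is missing is the information of which \emph{side} of each non-$\Delta$ hyperplane the subcomplex lies on.

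The paper's proof supplies exactly this missing piece. It works in $\widetilde X$ with the full group $A_\Delta$ (not just generators), defines $\mathcal H$ to be the hyperplanes \emph{not} skewered by any element of $A_\Delta$, and then proves that for each $H\in\mathcal H$ there is a \emph{unique} half-space $H^+$ containing an axis for every element of $A_\Delta$ (uniqueness uses $\lk(\Delta)=\emptyset$ in an essential way). One then sets $\widetilde X_\Delta=\bigcap_{H\in\mathcal H}(H^+\setminus\kappa(H))$, which is manifestly blowup-structure-free and is shown to coincide with $\widetilde\K_\Delta$. Your argument can be repaired along these lines, but as written it omits the half-space selection, which is the heart of the proof. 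Separately, your first paragraph's ``elementary moves'' approach has a basic problem: hyperplane duplication and collapse change the underlying cube complex $X$, so they are not moves between blowup structures on a fixed $X$; the relevant moves would be changes of treelike set within the same $X$, and you have not described those.
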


Proposition~\ref{prop:XDelta} can be proved combinatorially by keeping track of how the partitions change when we change the treelike set of hyperplanes or the labelings and orientations on the  hyperplanes not in the treelike set.  Being based in $\Delta$ turns out to be a property of hyperplanes in the treelike set, independent of the partitions used to describe them.  The same is true for the property of being the $\Delta$-side of a hyperplane   based in $\G\setminus\Delta$.  Since these properties are what is used to define $\K_\Delta$, the subcomplex $\K_\Delta$ itself is independent of the   {blowup} structure. Furthermore, an isometry $f\colon X\to X$ satisfying $(hfh\inv)_*\in U^0(\AG)$ preserves these properties, so preserves $\K_\Delta$.

  Below we give a different proof of Proposition~\ref{prop:XDelta}  in terms of the action of $\AG$ on $\UX$ determined by $h$.  This proof is more in the spirit of our previous paper \cite{BrCV} and more amenable to generalization.  We continue to assume that $\lk(\Delta)=\emptyset$.

The universal cover $\widetilde X$ is a CAT(0) cube complex, and we will take advantage of   the following facts about isometries of CAT(0) cube complexes.
We  say that a hyperbolic automorphism $g$ of a CAT(0) cube complex  {\em skewers} a hyperplane $H$ if some axis for $g$ crosses $H$ transversely.

\begin{lemma}\label{lem:skewer} Let $g$ be a hyperbolic automorphism of a CAT(0) cube complex  {$\widetilde X$} and let $H$ be a hyperplane in   {$\widetilde X$}.  If some axis for $g$ skewers $H$, then every axis for $g$ skewers $H.$ If no axis for $g$ skewers $H$,  then either all axes for $g$ are on the same side of $H$, or  $H$ contains an axis for $g$.
\end{lemma}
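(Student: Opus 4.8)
The plan is to work entirely inside the CAT(0) cube complex $\widetilde X$ with its $\ell^1$ (combinatorial) metric, exploiting the fact that hyperplanes are convex and two-sided and that the axes of a hyperbolic automorphism $g$ are permuted transitively by the centralizer of $g$. First I would recall the standard dichotomy: a hyperplane $H$ either separates $\widetilde X$ into two half-spaces $\widetilde X = H^- \sqcup H^+$, and a combinatorial geodesic crosses $H$ either zero or exactly one time. Given an axis $\gamma$ for $g$ — a bi-infinite combinatorial geodesic with $g\gamma=\gamma$ on which $g$ acts by translation — the key observation is that $\gamma$ crosses a hyperplane $H$ \emph{transversely} if and only if the sequence of hyperplanes dual to the edges of $\gamma$ contains $H$; and since $g$ translates $\gamma$, if $H$ appears in this sequence then so do all of its $g$-translates $g^n H$, which are pairwise disjoint and hence "march off to infinity" in both directions along $\gamma$. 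This is the mechanism that forces skewering to be detected by coarse geometry rather than by the particular choice of axis.

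The main step is then: \emph{if $\gamma$ skewers $H$ then $g$ has unbounded projection onto the "axis direction" of $H$}, made precise as follows. If $\gamma$ skewers $H$, the translates $\{g^n H\}_{n\in\Z}$ are all crossed by $\gamma$, are pairwise disjoint, and are linearly ordered along $\gamma$; any other axis $\gamma'$ is within bounded Hausdorff distance of $\gamma$ (all axes of a given hyperbolic isometry of a CAT(0) space lie in a single flat strip, or at least are pairwise at bounded distance, by the Flat Strip Theorem applied to $\langle g\rangle$), so $\gamma'$ must cross all but finitely many of the $g^n H$; but $\gamma'$ is $g$-invariant, so crossing one $g^n H$ forces crossing all of them, and in particular $H$ itself. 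Hence $\gamma'$ skewers $H$ as well. This proves the first assertion. For the contrapositive form, suppose no axis skewers $H$. Fix an axis $\gamma$. Since $\gamma$ does not cross $H$, either $\gamma$ lies entirely in one half-space $H^\pm$, or $\gamma$ meets the hyperplane $H$ itself (i.e. $\gamma\subseteq H$, using that $H$ is convex and $\gamma$ a geodesic that touches $H$ without crossing). In the first case, applying $g$ and using $g$-invariance of $\gamma$ shows $g$ preserves the half-space containing $\gamma$; combined with the first assertion (no axis skewers $H$), every axis lies in that same half-space, since two axes at bounded distance cannot lie in opposite half-spaces of a hyperplane neither of them crosses. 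In the second case $\gamma \subseteq H$, and then $H$ contains an axis, which is the stated alternative.

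The step I expect to be the main obstacle is making rigorous the claim that \emph{all axes of $g$ lie at bounded distance from one another} in the combinatorial metric, and that this bounded distance is enough to transfer the crossing pattern of the hyperplanes $g^n H$ from one axis to another. In a general CAT(0) space this is the Flat Strip Theorem ($\Min(g)$ splits as $(\text{axis})\times(\text{convex set})$), but one must check it interacts correctly with the cubical/$\ell^1$ structure — in particular that a geodesic at bounded distance from $\gamma$ eventually lies on the correct side of each $g^nH$ for $|n|$ large. I would handle this by noting that the half-spaces $g^nH^+$ (oriented consistently along $\gamma$) form a nested descending sequence whose intersection is empty and whose complements exhaust $\widetilde X$, so any bounded neighborhood of any point eventually lies inside $g^nH^+$ for $n$ large and outside for $n$ small; applying this to a point of $\gamma'$ and using that $\gamma'$ is connected and $g$-invariant then pins down that $\gamma'$ crosses every $g^nH$, hence $H$. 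An alternative, possibly cleaner, route avoids metric estimates entirely: argue purely at the level of hyperplane combinatorics, showing that the set of hyperplanes skewered by \emph{some} axis is exactly the set of hyperplanes $H$ such that $g$ and its translates $g^nH$ are pairwise disjoint and nested — a condition manifestly independent of the axis — and then deduce both assertions from this characterization. I would likely present the combinatorial version as the main argument and relegate the Flat Strip input to a remark.
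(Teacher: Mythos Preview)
Your argument for the first assertion is correct and close in spirit to the paper's, though the paper's organization is more direct and avoids the Flat Strip/bounded-Hausdorff-distance input you flag as the main obstacle. The paper argues by contradiction: if $\alpha_1$ crosses $H$ and $\alpha_2$ does not, pick a finite geodesic segment $\gamma$ from a point of $\alpha_1\cap H^-$ to a point of $\alpha_2\subset H^+$. After replacing $g$ by $g^{-1}$ if necessary, $g$ maps $\alpha_1\cap H^+$ into itself; since $g$ also preserves $\alpha_2$, each translate $g^kH$ ($k\geq 0$) still misses $\alpha_2$ and still separates $\alpha_1\cap H^-$ from $\alpha_2$. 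Hence $\gamma$ would have to cross infinitely many hyperplanes, which is impossible. No appeal to distances between axes is needed.

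For the second assertion there is a genuine gap. The claim ``two axes at bounded distance cannot lie in opposite half-spaces of a hyperplane neither of them crosses'' is false. Take $\widetilde X=\mathbb{R}^2$ with its standard cubing, $g$ the unit horizontal translation, and $H=\{y=\tfrac12\}$: the axes $\{y=0\}$ and $\{y=1\}$ are at distance $1$, neither crosses $H$, yet they lie in opposite half-spaces. This is exactly the situation in which the lemma asserts that $H$ \emph{contains} an axis (here $\{y=\tfrac12\}$), and your argument never produces one. The paper handles this case precisely via the product decomposition you allude to: the minset of $g$ splits as $\alpha\times Y$, so two axes on opposite sides of $H$ bound a strip $\alpha\times I$; the intersection of this convex strip with the convex hyperplane $H$ is a convex set separating the two boundary axes and must therefore contain a full fibre $\alpha\times\{y\}$, which is itself an axis for $g$ lying in $H$. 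You need to run this (or an equivalent) argument rather than ruling out the opposite-sides case.
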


\begin{proof} Let $\alpha_1, \alpha_2$ be two axes for $g$.  Suppose $\alpha_1$ crosses $H$ but not $\alpha_2$ does not.  Let $H^+$ denote the half space containing $\alpha_2$ and $H^-$ the complementary half space.  Set $\alpha_1^\pm= H^\pm \cap  \alpha_1$.   Let $\gamma$ be a geodesic connecting a point in $\alpha_1^-$ to a point in $\alpha_2$.  Any such path must cross $H$.

The action of $g$ preserves both axes so either $g$ or $g^{-1}$ maps $\alpha_1^+$ into itself.  Without loss of generality, assume $g(\alpha_1^+) \subset \alpha_1^+$ , or equivalently, $\alpha_1^- \subset g(\alpha_1^-)$.  Consider the action on  $\UX$  by positive powers of $g$.  Since $H$ does not intersect $\alpha_2$, the hyperplanes $g^kH$ also do not intersect $\alpha_2$.  Thus $g^kH$ separates  $\alpha_1^-$ from $\alpha_2^+$ for all $k>0$.  But this means that the path $\gamma$ must cross infinitely many hyperplanes, which is impossible.

 {Now suppose $\alpha_1, \alpha_2$ lie on opposite sides of $H$.  The minset of g decomposes as an orthogonal product of an axis $\alpha$ and a convex subspace of $\widetilde X$. It follows that $\alpha_1, \alpha_2$ span a strip $\alpha \times I$  for some interval $I$.  This strip intersects $H$ in a convex set which separates these two axes.  Any such set must contain $\alpha \times y$ for some point $y \in I$.}
\end{proof}

\begin{proof}[Proof of Proposition~\ref{prop:XDelta}]  Fix a basepoint at a vertex $x_0\in X$, and let $p_0$ denote the base vertex of $\SaG$. We may assume the marking $h\colon X\to \Sa_\G$ sends $x_0$ to $p_0$, so induces an isomorphism $\pi_1(X,x_0)\iso\pi_1(\SaG,p_0)\equiv\AG$.  The collapse map $c_\pi\colon X=\SP\to \SaG$ gives another marking, and by construction the composition $\phi=(h\circ c_\pi^{-1})_*\colon A_\G\rightarrow A_\G$ lies in $U^0(\AG)$ (as an automorphism). Since $A_\Delta$ is $U^0$-invariant, $\phi$ sends $A_\Delta$ to a conjugate of itself. We may therefore homotope $h$, by dragging the basepoint $p_0$ around a loop in $\SaG$, so that $\phi$ sends $A_\Delta$ isomorphically to $A_\Delta$. If we choose a lift $\tilde x_0$ of $x_0$ to the universal cover $\tilde X$, we obtain two actions of $\AG$ on $\tilde X$, one from $h$ and the other from $c_\pi$. Since $\phi$ preserves $A_\Delta$, this means that the axes for elements of $A_\Delta$ under both actions coincide setwise.

Our goal will now be to characterize $\K_\Delta$ as subcomplex of $X$  purely in terms of the $U^0$-marking $h\colon X\rightarrow \SaG$, or equivalently, in terms of the action of $\AG$ on $\tilde X$. The preceding paragraph implies that if we can characterize $\K_\Delta$ in terms of the set of axes of elements of $A_\Delta$, it does not matter whether we use the action from $h$ or $c_\Pi$.

Now let $\mathcal H$ be the set of hyperplanes in $\UX$ that  are {\em not} skewered by  any element of $A_\Delta$.  By Lemma~\ref{lem:skewer} this is the same set whether we are considering the action defined by $h$ or by $c_\pi$.
\begin{claim*}  If $H\in\mathcal H$, then exactly one half-space $H^+\subset \UX$  contains an axis for every element of $A_\Delta$.
\end{claim*}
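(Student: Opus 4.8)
The plan is to argue that, since $H \in \mathcal{H}$, no element of $A_\Delta$ skewers $H$, so by Lemma~\ref{lem:skewer} for each $w \in A_\Delta$ either all axes of $w$ lie on one side of $H$ or $H$ contains an axis of $w$. I would first rule out the possibility that $H$ contains an axis of some $w \in A_\Delta$ while also needing to pin down a single half-space working for all of $A_\Delta$ simultaneously. The key point is to exploit the hypothesis $\lk(\Delta)=\emptyset$: this means $A_\Delta$ is not contained in any proper special subgroup arising as the stabilizer of a hyperplane-like object, and more concretely, that $A_\Delta$ contains elements whose axes are ``spread out'' in independent directions. I would use the action coming from $h$ (equivalently $c_\pi$, by the preceding paragraph and Lemma~\ref{lem:skewer}).

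First I would show that no axis of any nontrivial $w \in A_\Delta$ is contained in $H$. If $H$ contained an axis $\alpha$ for $w$, then $H$ would be a hyperplane invariant under $\langle w \rangle$ (up to the parallel class), and its stabilizer in $A_\Delta$ would be a subgroup containing $w$ whose action fixes $H$ setwise; combined with the fact that $\lk(\Delta)=\emptyset$, one can argue there must exist another element $w' \in A_\Delta$ whose axis skewers $H$ — because an axis lying in a hyperplane forces $w$ to commute (virtually, or up to powers) with a ``dual direction'' that is not available when $\Delta$ has empty link. This would contradict $H \in \mathcal{H}$. Concretely: pick a vertex $v \in \Delta$; because $\lk(\Delta)=\emptyset$ there is some $v' \in \Delta$ with $v'$ not adjacent to $v$, and the subgroup $\langle v, v' \rangle$ is a free group of rank two inside $A_\Delta$; a rank-two free subgroup cannot have all its axes trapped inside a single hyperplane (a hyperplane is itself a CAT(0) cube complex on which $\langle v,v'\rangle$ would then act, but the containment of the axis forces $v$ to translate along $H$, and $v'$ is then forced to skewer $H$ or to translate along $H$ as well — the latter would make $\langle v,v'\rangle$ virtually act on a line, impossible for a free group of rank $\geq 2$). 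Hence the second alternative of Lemma~\ref{lem:skewer} never occurs for nontrivial elements of $A_\Delta$, so for every nontrivial $w \in A_\Delta$ all axes of $w$ lie strictly on one side of $H$, call it $H^+(w)$.

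Next I would show the half-spaces $H^+(w)$ agree as $w$ ranges over $A_\Delta \setminus \{1\}$. Suppose $w_1, w_2 \in A_\Delta$ have $H^+(w_1) \neq H^+(w_2)$, i.e. the axes of $w_1$ lie in $H^+$ and those of $w_2$ lie in $H^-$. Consider the product $w_1 w_2$ (or a suitable power): its axis must also avoid being skewered by $H$ (since $w_1 w_2 \in A_\Delta$ and $H \in \mathcal H$), hence lies entirely on one side. But a standard ping-pong / bounded-cancellation argument shows that for a suitable $n$, $(w_1^n w_2^n)$ has translation length growing and its axis passes near both an axis of $w_1$ and an axis of $w_2$, forcing it to cross $H$ — contradiction. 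Alternatively, and more cleanly, I would invoke the fact that the fixed half-space can be characterized as $\bigcap_{w \in A_\Delta} H^+(w)$, and show this intersection is nonempty: the axes of all elements of $A_\Delta$ together with the basepoint $\tilde x_0$ (which we may take to lie on an axis) generate a connected $A_\Delta$-invariant subset $Y \subseteq \widetilde X$; since no hyperplane in $\mathcal H$ is skewered by $A_\Delta$, such a hyperplane cannot separate $Y$, so $Y$ lies in a single half-space $H^+$, and every axis of every element of $A_\Delta$ lies in $Y \subseteq H^+$. Uniqueness of $H^+$ is then immediate since the complementary half-space contains no axis at all.

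The main obstacle I anticipate is the first step: excluding the case that $H$ contains an axis of some element of $A_\Delta$. This is exactly where the hypothesis $\lk(\Delta) = \emptyset$ must be used in an essential way (the proposition is stated only in this case, and Remark~\ref{rmk:nonemptylink} flags the general case as genuinely different), and one must translate ``empty link'' into a statement about the richness of the $A_\Delta$-action on $\widetilde X$ — namely that $A_\Delta$ does not stabilize any hyperplane, equivalently does not fix any point of the visual boundary dual to a hyperplane. I would prove this by showing that the stabilizer in $A_\G$ of a hyperplane $H$ of $\widetilde \Sa_\G^\Pi$ is conjugate into $A_{\st(v)}$ for some vertex $v$ (this is a known feature of blowups, following from the structure of carriers of hyperplanes recalled in Section~\ref{sec:blowups}), and then observing that $A_\Delta \not\leq g A_{\st(v)} g^{-1}$ for any $v$ and $g$ precisely because $\Delta$ is $U^0$-invariant with empty link, so $\Delta \not\subseteq \st(v)$ and $\Delta$ is not conjugate into a star. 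Once that structural fact is in hand, the rest is the ping-pong/connectedness argument above, which is routine.
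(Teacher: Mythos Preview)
Your proposal misses the paper's central device: the already-constructed subcomplex $\K_\Delta$ (from Proposition~\ref{prop:LinklessSubcomplex}). The paper takes the lift $\widetilde\K_\Delta$ preserved by $A_\Delta$; it is convex, hence connected, and contains an axis for every element of $A_\Delta$. The paper then shows $H\in\mathcal H$ if and only if $H\cap\widetilde\K_\Delta=\emptyset$ (using that every edge of $\widetilde\K_\Delta$ lies on some $A_\Delta$-axis, Lemma~3.10 of \cite{BrCV}). Existence of the half-space $H^+$ is then immediate: it is simply the side containing $\widetilde\K_\Delta$. Your ``alternative, cleaner'' route --- building a connected $A_\Delta$-invariant set $Y$ containing all the axes --- is exactly the right idea, but you never produce such a $Y$; the union of axes together with a basepoint has no reason to be connected. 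The paper's $\widetilde\K_\Delta$ \emph{is} that $Y$, and it was built explicitly from the blowup structure precisely for this purpose.

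Your other arguments have real problems. First, your free-group argument for excluding axes inside $H$ does not work as written: $\lk(\Delta)=\emptyset$ does \emph{not} say there exist non-adjacent $v,v'\in\Delta$ (it says no vertex of $\Gamma$ is adjacent to all of $\Delta$), and in any case a hyperplane is itself a CAT(0) cube complex which can perfectly well support a free action of a rank-two free group, so ``$\langle v,v'\rangle$ would virtually act on a line'' is false. Second, your ping-pong sketch for agreement of the half-spaces $H^+(w)$ is not justified: an axis of $w_1^nw_2^n$ need not pass near axes of both $w_1$ and $w_2$ in any way that forces it to cross $H$. For uniqueness, the paper instead argues: if both half-spaces contained axes for every element of $A_\Delta$, then by Lemma~\ref{lem:skewer} so would $H$; but the edge dual to $H$ lies on an axis for some $w\in A_\Gamma\setminus A_\Delta$ with $H\subset\Min(w)$ (again Lemma~3.10 of \cite{BrCV}), forcing $w$ to commute with all of $A_\Delta$ and contradicting $\lk(\Delta)=\emptyset$. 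That is the place where the empty-link hypothesis enters, and it is via centralizers rather than via hyperplane stabilizers as you propose.
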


\begin{proof}[Proof of claim] Let $\widetilde\K_\Delta$ be the lift of $\K_\Delta$ preserved by the action of $A_\Delta$.  The lift $\widetilde\K_\Delta$ is convex, hence is connected and contains an axis for every element of $A_\Delta$.

Let $H$ be a hyperplane in $\UX$. We claim that $H\in\mathcal H$ if and only if $H\cap \widetilde \K_\Delta=\emptyset$, so that all of $\widetilde \K_\Delta$ is on the same side of $H$, and that side contains an axis for every $g\in A_\Delta$.

If $H\cap \widetilde \K_\Delta\neq\emptyset$ then $\widetilde \K_\Delta$ contains an edge dual to $H$.  Lemma 3.10 of \cite{BrCV} implies that every edge in $\widetilde \K_\Delta$ is in some axis for some $g\in A_\Delta$, so $H$ is skewered by an element of $A_\Delta,$ \emph{i.e.} $H$ is not in $\mathcal H$.  Conversely, if $H$ is skewered by an element $g\in A_\Delta$ then by Lemma~\ref{lem:skewer} every axis for $g$ skewers $H$, so in particular some axis contained in $\widetilde \K_\Delta$ skewers $H$, so $H$ intersects $\widetilde \K_\Delta$.

If both half-spaces determined by $H$ contain axes for every element of $A_\Delta$ then by
Lemma~\ref{lem:skewer}, $H$ itself contains an axis for every element of $A_\Delta$.  Let $e_H$ be an edge dual to $H$.  Then $e_H$ is contained in an axis for some element $w \in A_\G$, since that is true of every edge in $\UX$,  and $w \notin A_\Delta$ since $w$ skewers $H$.  By Lemma 3.10 of \cite{BrCV}, all of $H$ is contained in the min set for $w$ so $w$ commutes with every element of $A_\Delta$, \emph{i.e.} $A_\Delta$ has nontrivial centralizer.  This contradicts the assumption that  $\lk(\Delta)= \emptyset$.
\end{proof}

 We now define
 $$\UX_\Delta=\bigcap_{H\in\mathcal H} H_\Delta,$$
 where $H_\Delta=H^+\setminus \kappa(H)$ is the largest subcomplex of $\UX$ contained in $H^+$.  This is independent of the blowup structure, and coincides with $\widetilde{\K}_\Delta$ for the action defined by $c_\pi$. Therefore the image $\K_\Delta$ of $\widetilde X_\Delta$ in $X$ is independent of the blowup structure.
 \end{proof}
 
 { \begin{remark} In the terminology of \cite{CapraceSageev}, the set of hyperplanes $\mathcal{H}$ occurring in the proof of Proposition~\ref{prop:XDelta} are exactly those which are \emph{inessential} for the action of $A_{\Delta}$.  Indeed, by \cite[Proposition 3.2(ii)]{CapraceSageev}, the essential core $\textrm{Ess}(\widetilde X,A_\Delta)$ for the action of $A_\Delta$ on $\widetilde X$ consists of those hyperplanes skewered by the axis of $A_\Delta$, \emph{i.e.} the complement of $\mathcal{H}$. Thus, the proposition asserts the existence of a convex subcomplex of $\widetilde{X}$ whose hyperplanes extend exactly to $\textrm{Ess}(\widetilde X,A_\Delta)$, and on which $A_\Delta$ acts cocompactly.  
 \end{remark}}

{\em Notation.} Since Proposition~\ref{prop:XDelta} shows that $\K_\Delta$ is independent of the blowup structure, we emphasize this by using the notation $X_\Delta$ instead of $\K_\Delta$ for the image of $\widetilde X_\Delta$ in $X$.

\begin{corollary}\label{cor:NoLinkRestriction}  Let $\Delta$ be a $U^0$-invariant subgraph with $\lk(\Delta)=\emptyset$, and $G<U^0(\AG)$ a finite subgroup which is realized on a  $\G$-complex $X$ with an untwisted marking $h\colon X\to\SaG$.   Then the restriction of $G$ is realized on  $(X_\Delta,h|_{X_\Delta})$. \end{corollary}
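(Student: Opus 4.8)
The plan is to restrict the given cubical action and the marking to $X_\Delta$ and then verify the realization condition on the subcomplex. Since $G$ is realized on $X$, there is a homomorphism $f\colon G\to\Aut(X)$ with $h\circ f(g)\circ h\inv$ inducing the inclusion $\rho(g)\in U^0(\AG)$ on $\pi_1(\SaG)=\AG$ for every $g$; in particular $(hf(g)h\inv)_*\in U^0(\AG)$. By the observation following Proposition~\ref{prop:XDelta} (an isometry $f\colon X\to X$ with $(hfh\inv)_*\in U^0(\AG)$ preserves $X_\Delta$), each $f(g)$ maps $X_\Delta$ to itself, so restriction gives a homomorphism $f_\Delta\colon G\to\Aut(X_\Delta)$, $f_\Delta(g)=f(g)|_{X_\Delta}$. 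As in the paragraph preceding Proposition~\ref{prop:XDelta}, after a homotopy of $h$ we may assume $h$ restricts to a marking $h|_{X_\Delta}\colon X_\Delta\to\Sa_\Delta$; such a homotopy alters each $hf(g)h\inv$ only by an inner automorphism, hence changes neither $\rho$ in $\Out(\AG)$ nor the fact that the $f(g)$ preserve $X_\Delta$.

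It remains to identify the outer automorphism of $A_\Delta=\pi_1(\Sa_\Delta)$ induced by $(h|_{X_\Delta})\circ f_\Delta(g)\circ(h|_{X_\Delta})\inv$. Let $\iota\colon X_\Delta\hookrightarrow X$ and $j\colon\Sa_\Delta\hookrightarrow\SaG$ be the inclusions; by construction $h\circ\iota\simeq j\circ(h|_{X_\Delta})$, $f(g)\circ\iota=\iota\circ f_\Delta(g)$, and (since $X_\Delta$ is a subdivided blowup of $\Sa_\Delta$ by Proposition~\ref{prop:LinklessSubcomplex}) the map $j_*\colon A_\Delta\to\AG$ is the standard inclusion, in particular injective. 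Choose a homotopy equivalence $\Phi_g$ of $\SaG$ representing $\rho(g)$ with $\Phi_g(\Sa_\Delta)\subseteq\Sa_\Delta$ and $\Phi_g|_{\Sa_\Delta}$ representing $r_\Delta(\rho(g))$; this is possible by $U^0$-invariance of $\Delta$ and is precisely the construction of $r_\Delta$ in Proposition~\ref{prop:restriction}. Then
\[
 j\circ(h|_{X_\Delta})\circ f_\Delta(g)\;\simeq\;h\circ\iota\circ f_\Delta(g)\;=\;h\circ f(g)\circ\iota\;\simeq\;\Phi_g\circ h\circ\iota\;\simeq\;j\circ(\Phi_g|_{\Sa_\Delta})\circ(h|_{X_\Delta}),
\]
and cancelling the injection $j_*$ on $\pi_1$ shows that $(h|_{X_\Delta})\circ f_\Delta(g)\circ(h|_{X_\Delta})\inv$ induces $r_\Delta(\rho(g))$ on $A_\Delta$. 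Thus $(X_\Delta,h|_{X_\Delta})$ realizes the restriction of $G$.

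The only delicate point I anticipate is making the homotopy of $h$ and the choices of $\Phi_g$ compatibly, and checking that the resulting automorphism of $A_\Delta$ is \emph{exactly} $r_\Delta(\rho(g))$ rather than merely a conjugate of it. This is governed by the same normalizer computation used in Proposition~\ref{prop:restriction}: any two representatives of $\rho(g)$ that preserve $A_\Delta$ differ by conjugation by an element of the normalizer $A_\Delta\times A_{\lk(\Delta)}$, which equals $A_\Delta$ since $\lk(\Delta)=\emptyset$, so they represent the same element of $\Out(A_\Delta)$ — which is all the realization condition detects.
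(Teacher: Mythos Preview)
Your proof is correct and follows the same approach as the paper: use Proposition~\ref{prop:XDelta} (independence of $X_\Delta$ from the blowup structure) to conclude that each $f(g)$ preserves $X_\Delta$. The paper's proof stops there, leaving the verification that the restricted action realizes $r_\Delta\circ\rho$ implicit; you have spelled this out carefully, including the normalizer argument (which is exactly what is needed, and is made cleaner by the hypothesis $\lk(\Delta)=\emptyset$).
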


\begin{proof}  An element of $G$ is realized by an isometry $f\colon X\to X$.  An isometry  of $X$ sends any blowup structure to a new blowup structure.  If the isometry induces an element of $U^0(\AG)$, we have shown that $X_\Delta$ has not changed, so $f$ must send $X_\Delta$ to itself.
\end{proof}

\begin{remark}\label{rmk:nonemptylink}  We constructed $X_\Delta$ assuming that $\lk(\Delta)=\emptyset$.   If this is not the case, we can look instead at $\st(\Delta),$ which always has empty link.  We will see in Section~\ref{sec:joins} that the complex for $\st(\Delta)=\Delta*\lk(\Delta)$ breaks into a product $X_\Delta\times X_{\lk(\Delta)}$, where $X_\Delta$ and $X_{\lk(\Delta)}$ are    $\Delta$- and $\lk(\Delta)$-complexes respectively.
\end{remark}

\section{Extendable $\Delta$-complexes}\label{sec:extendable}

Let $\Delta$ be a $U^0$-invariant subgraph of $\G$,   $\rho\colon G\to U^0(\AG)$ a homomorphism from a finite group $G$  and $(X,h)$ a marked $\G$-complex realizing $\rho$.  In the last section we found a $G$-invariant $\Delta$-complex  (possibly subdivided) sitting inside $X$.  In this section we consider the opposite problem: given a marked $\Delta$-complex $Y$ realizing the restriction $\rho_\Delta\colon G\to U^0(\AG)$, when can $Y$ sit equivariantly inside  a marked $\G$-complex?     If there is such a marked $\G$-complex we say $Y$ is {\em extendable}.
 To determine when $Y$ is extendable,  we first define what it means for  a $\Delta$-partition to be extendable.

 \begin{definition}Let $\Delta\subset \G$ be a $U^0$-invariant subgraph and let $\WQ$ be a $\Delta$-partition.  We say that $\WQ$ is \emph{extendable} if there exists a $\G$-partition $ \widehat\WQ$ such that $ \widehat\WQ\cap \Delta^\pm=\WQ$.
\end{definition}

\begin{proposition}\label{lem:extendable} Let $\Delta$ be a $U^0$-invariant subgraph of $\G$ and $\mathcal Q$ a $\Delta$-partition.  Then $\mathcal Q$ is extendable if and only if  {there is some base $m$ of $\WQ$ such that} 
\begin{enumerate}
\item  $\lk_\G(v)\subseteq \lk_\G(m)$ for every $v$ split by $\WQ$, and
\item
if $v_1$ and $v_2$ are in the same component of $\G\setminus\st_\G(m)$, then $v_1^\pm,v_2^\pm$ are all in the same side of $\WQ$.
\end{enumerate}
\end{proposition}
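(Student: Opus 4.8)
The plan is to reduce, in both directions, to $\G$-partitions \emph{based at a base $m$ of $\WQ$}, and then to compare the connected components of $\G^\pm\setminus\lk^\pm(m)$ with those of $\G\setminus\st_\G(m)$. The comparison I will use is: a non-singleton component $D$ of $\G\setminus\st_\G(m)$ lifts to a \emph{single} component $D^+\cup D^-$ of $\G^\pm\setminus\lk^\pm(m)$ (one can flip the sign of any vertex of $D$ by crossing an edge of $D$), whereas if $\lk_\G(u)\subseteq\lk_\G(m)$ — in particular if $u=m$ or $u$ is an isolated vertex of $\G\setminus\st_\G(m)$ — then $u$ and $u^{-1}$ are each isolated in $\G^\pm\setminus\lk^\pm(m)$ and hence form two singleton components.

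For necessity, suppose $\WQ=\widehat\WQ\cap\Delta^\pm$ for a $\G$-partition $\widehat\WQ$ with base $m_0$. Since $\WQ$ is a $\Delta$-partition it splits some vertex $v\in\Delta$; then $v$ is split by $\widehat\WQ$, so $\lk_\G(v)\subseteq\lk_\G(m_0)$, and since $\Delta$ is $U^0$-invariant, condition (i) of Lemma~\ref{lem:InvariantSubgraph} forces $m_0\in\Delta$. As $m_0$ is split by $\widehat\WQ$ (hence by $\WQ$) and $\lk(\WQ)=\lk(\widehat\WQ)\cap\Delta^\pm=\lk^\pm_\Delta(m_0)$, the vertex $m:=m_0$ is a base of $\WQ$. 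Condition (1) is then immediate: any $v$ split by $\WQ$ is split by $\widehat\WQ$, so $\lk_\G(v)\subseteq\lk_\G(m)$. For (2), if $v_1\neq v_2$ lie in a common component $D$ of $\G\setminus\st_\G(m)$ then $D$ is non-singleton, so $v_1^{\pm1},v_2^{\pm1}$ all lie in the single component $D^+\cup D^-$ of $\G^\pm\setminus\lk^\pm(m)$; each side of $\widehat\WQ$ is a union of such components, so these four elements lie on one side of $\widehat\WQ$, hence on one side of $\WQ=\widehat\WQ\cap\Delta^\pm$.

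For sufficiency, given a base $m$ of $\WQ$ satisfying (1) and (2), I will build $\widehat\WQ$ with link $\lk^\pm(m)$ by assigning each component $C$ of $\G^\pm\setminus\lk^\pm(m)$ to a side: to side $i$ if $C$ meets $Q_i$, and arbitrarily to side $1$ otherwise. The point to verify is that this is well-defined, i.e.\ no $C$ meets both $Q_1$ and $Q_2$. Such a $C$ would be non-singleton, hence disjoint from the singleton components $\{m\},\{m^{-1}\}$, so it projects onto a single component $D$ of $\G\setminus\st_\G(m)$; its witnesses $a\in C\cap Q_1$ and $b\in C\cap Q_2$ cannot have the same underlying vertex $u$ (that $u$ would be split by $\WQ$, and then (1) makes $u,u^{-1}$ isolated in $\G^\pm\setminus\lk^\pm(m)$, contradicting $a,b\in C$), so $\bar a\neq\bar b$ are distinct vertices of $\Delta$ in the common component $D$, contradicting (2). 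Given well-definedness, $\widehat\WQ$ is a genuine $\G$-partition based at $m$: each side is a union of components, each contains the corresponding $Q_i$ and so has at least two elements, and $m,m^{-1}$ land on different sides because they do so in $\WQ$. Finally $\lk(\widehat\WQ)\cap\Delta^\pm=\lk^\pm_\Delta(m)=\lk(\WQ)$, and for $a\in\Delta^\pm\setminus\lk^\pm_\Delta(m)=Q_1\sqcup Q_2$ the component containing $a$ is assigned to side $i$ precisely when $a\in Q_i$; hence side $i$ meets $\Delta^\pm$ in exactly $Q_i$, so $\widehat\WQ\cap\Delta^\pm=\WQ$ and $\WQ$ is extendable.

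The main obstacle I anticipate is the sign bookkeeping underlying the two structural facts in the first paragraph and the well-definedness check — which is the only place conditions (1) and (2) are used together. A subtler preliminary point, easy to overlook, is that the base of \emph{any} extension of $\WQ$ automatically lies in $\Delta$ (via Lemma~\ref{lem:InvariantSubgraph}), which is what justifies phrasing the criterion in terms of a base $m$ of $\WQ$; without it one would have to allow bases outside $\Delta$, and conditions (1)--(2), which refer to $\lk_\G(m)$ and $\st_\G(m)$, would not obviously be the right ones.
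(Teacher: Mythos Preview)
Your proof is correct and follows essentially the same approach as the paper's: both directions hinge on the observation that any extension must be based at some $m\in\Delta$ (via the $U^0$-invariance of $\Delta$), and the extension is built by distributing the components of $\G\setminus\st_\G(m)$ (equivalently, of $\G^\pm\setminus\lk^\pm(m)$) among the two sides according to where their $\Delta$-vertices already lie. Your presentation is somewhat more systematic---working directly with the components of $\G^\pm\setminus\lk^\pm(m)$ and verifying well-definedness in one stroke---whereas the paper treats $\Delta$-vertices and $(\G\setminus\Delta)$-vertices separately; but this is a difference of organization, not of substance.
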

\begin{proof}
The ``only if'' direction is immediate, since any extension of $\WQ$ is a $\G$-partition.   Note that any extension of $\WQ$ also splits $m$.   In fact it has to be based at $m$  since   $\Delta$ is $U^0$-invariant, which implies there is no $v\in \G\setminus \Delta$ with $\lk(v)\supseteq \lk(m)$.

For the converse, suppose $\WQ=(\lk_\Delta^\pm(m)|Q_1|Q_2)$ satisfies conditions (1) and (2).  We   build  a $\G$-partition $(\lk^\pm_\G(m)|\widehat Q_1|\widehat Q_2)$ based at $m$ as follows.

 { If $v\in\lk_\Delta(m)$ then $v\in\lk_\G(m)$. If $v$ (resp. $v\inv$) is  in $Q_i$  put $v$ (resp. $v\inv$) in $\widehat Q_i$. This determines where to place all $v^{\pm 1}$ for $v\in\Delta$.}
 
 { Now suppose $v\in\G\setminus (\Delta\cup\st_\G(m))$ and let $C$ be the component of $\G\setminus \st_\G(m)$ containing $v$. If $C\cap\Delta=\emptyset$, put all vertices of $C$ and their inverses in the same side of $\widehat \WQ$ (either side will do).  
If $C\cap\Delta$ is non-empty then some side $Q_i$ of $\WQ$ contains an element $w\in \Delta$.  We must have $w\inv\in Q_i$ as well, since otherwise $\lk(w)\subseteq\lk(m)$ by condition (1), which would imply that $w$ was the only vertex of $C$. By condition (2) the side $Q_i$ is independent of the choice of $w$. Put all vertices of $C$ and their inverses into $\widehat Q_i$.  }  
\end{proof}

 \begin{definition}
  A blowup $\mathbb S_\Delta^\Omega$ is {\em extendable} if every $\WQ\in\Omega$ is extendable. Note we are not assuming the extended partitions are compatible. A $\Delta$-complex is {\em extendable} if it can be given the structure of an extendable blowup.
 \end{definition}

\begin{remark} It is not hard to show that if a $\Delta$-complex is extendable with respect to one blowup structure, then it is extendable with respect to any blowup structure, but we will not need this fact.
\end{remark}

\subsection{$U^0$-invariant subgraphs and extendability}

In this subsection we give a condition that guarantees that a $\Delta$-complex realizing $\rho_\Delta$ is extendable.

 \begin{definition}  A $G$-action on a $\G$-complex $X$ is {\em  reduced} if no orbit of hyperplanes is contained in any treelike set. A marked $\G$-complex $(X,h)$ realizing $\rho \colon G\rightarrow \Out(A_\G)$   is {\em  reduced} if the associated $G$-action on $X$ is reduced.
\end{definition}

If a marked $\G$-complex $(X,h)$ realizing $\rho$ is not reduced, then  some orbit $G.H$ is acyclic since it is contained in the treelike (acyclic) set associated to some blowup structure on $X$. We can collapse every hyperplane in $G.H$ to produce a new marked $\G$-complex.
The following lemma guarantees that the new $\G$-complex still realizes $\rho$.

\begin{lemma}\label{lem:CollapseSep} Let   $X$ be an NPC cube complex and let $G\leq \Isom(X)$ be a subgroup. Suppose $\mathcal{S}$ is a collection of hyperplanes that is acyclic and $G$-invariant.  If $G\to \Out(\pi_1(X))$ is injective, then the collapse map $c\colon X\rightarrow X\sslash\mathcal{S}$ induces an injection $G\rightarrow \Isom(X\sslash\mathcal{S})$.
\end{lemma}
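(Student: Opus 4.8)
The plan is to treat the collapse map $c\colon X\to X\sslash\mathcal S$ as a $G$-equivariant homotopy equivalence and transport the injectivity hypothesis across it.

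First I would record that $c$ is $G$-equivariant. Since $\mathcal S$ is $G$-invariant, each $g\in G$ permutes the hyperplanes in $\mathcal S$ together with their carriers, so $g$ descends to an isometry $\bar g$ of $X\sslash\mathcal S$ with $\bar g\circ c=c\circ g$; because $c$ is surjective, $g\mapsto\bar g$ is a homomorphism $G\to\Isom(X\sslash\mathcal S)$, and this is precisely the map induced by $c$ that we must show is injective. I would also recall that, by the definition of acyclicity, $c$ is a homotopy equivalence.

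To prove injectivity, I would take $g\in G$ with $\bar g=\id_{X\sslash\mathcal S}$, so that $c\circ g=c$. Choosing a homotopy inverse $d$ of $c$ then gives $g\simeq (d\circ c)\circ g=d\circ(c\circ g)=d\circ c\simeq\id_X$; that is, $g$ is freely homotopic to $\id_X$. Now a self-map of a path-connected space that is freely homotopic to the identity induces the trivial class in $\Out$ of the fundamental group: the trace of the homotopy at a basepoint is exactly a path along which one conjugates to define the automorphism of $\pi_1(X)$ attached to $g$, so that automorphism is inner. Hence $g$ lies in the kernel of $G\to\Out(\pi_1(X))$, which is trivial by hypothesis, so $g=1$.

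I do not expect a genuine obstacle; the argument is short and acyclicity is used exactly once. The one point needing care is the identification of ``freely homotopic to $\id_X$'' with ``trivial in $\Out(\pi_1(X))$'', together with the observation that the map $G\to\Out(\pi_1(X))$ in the hypothesis is the canonical one coming from the isometric $G$-action, so that it is natural with respect to the equivariant map $c$. If one prefers, the whole proof can be packaged diagrammatically: $c$ induces a $G$-equivariant isomorphism $\Out(\pi_1(X))\xrightarrow{\ \sim\ }\Out(\pi_1(X\sslash\mathcal S))$, and the composite $G\to\Isom(X\sslash\mathcal S)\to\Out(\pi_1(X\sslash\mathcal S))$ equals the composite of the injection $G\hookrightarrow\Out(\pi_1(X))$ with this isomorphism; being injective, it forces $G\to\Isom(X\sslash\mathcal S)$ to be injective as well.
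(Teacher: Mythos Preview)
Your proof is correct and essentially the same as the paper's. In fact, the diagrammatic packaging you give at the end is precisely the paper's argument: they write down the commutative square
\[
\begin{tikzcd}
G \arrow[r] \arrow[d,"\bar c"'] & \Out(\pi_1(X)) \arrow[d,"\cong"] \\
\Isom(X\sslash\mathcal S) \arrow[r] & \Out(\pi_1(X\sslash\mathcal S))
\end{tikzcd}
\]
and read off injectivity of $\bar c$ from injectivity of the top arrow. Your element-chase in the first half is just the same square unwound.
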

\begin{proof} Let $\pi=\pi_1(X)$. Since $\mathcal{S}$ is acyclic, $c$ is a homotopy equivalence. The fact that $G$ preserves $\mathcal{S}$ means there is an induced map $\bar{c}\colon G\rightarrow \Isom(X\sslash\mathcal{S})$.  We obtain a commutative diagram:
\begin{equation}\xymatrix{G\ar[r]\ar[d]^{\bar{c}} &\Out(\pi)\ar[d]^{\cong}\\
\Isom(X\sslash\mathcal{S})\ar[r]& \Out(\pi)
}\end{equation}
By assumption, $G\leq \Isom(X)$ injects into $\Out(\pi)$, hence $G$ also injects under $\bar{c}$.
\end{proof}

Let $(X,h)$ be a marked $\G$-complex realizing $\rho$.   By Lemma~\ref{lem:CollapseSep} we may continue to collapse $G$-orbits of hyperplanes until we obtain a reduced marked $\G$-complex realizing $\rho$.  Note that the result of this process is not unique, but depends on the set of orbits we choose to collapse.

 In Section~\ref{sec:XDelta} we produced a subcomplex $X_\Delta$ of  $X$  for any $U^0$-invariant subgraph $\Delta$ with empty link.  If $(X,h)$ is reduced,  then in any blowup structure the orbit of any hyperplane labelled by a partition $\WP$ contains a hyperplane labelled by an element of $V$.  Since the action of $G$ preserves the subcomplex $X_{\Delta}$, the same must be true for orbits in $X_{\Delta}$, so the action of $G$ on $X_{\Delta}$ is also reduced.
As an example, note that if $(X,h)$ is reduced and the restriction of $G$ to $U^0(A_{\Delta})$ is trivial, then the subcomplex  $X_{\Delta}$ must be equal to the Salvetti complex $\Sa_\Delta$.  This follows since no non-trivial blow-up of $\Sa_\Delta$ is  reduced with respect to the trivial action. Thus  reduced realizations of $G$ may be thought of as ``minimal" $\G$-complexes realizing $G$.

Let $\Delta$ be a $U^0$-invariant subgraph of $\G$.  The next proposition states that being reduced is sufficient to guarantee extendability for any  $\Delta$-complex realizing the restriction $\rho_\Delta$.

\begin{proposition}\label{prop:ExtendableAmalgam}Let $(X_\Delta,h_\Delta)$ be a marked $\Delta$-complex that realizes $\rho_\Delta$.  If the action of $G$ on $X_\Delta$ is  reduced, then $X_\Delta$ is extendable.
\end{proposition}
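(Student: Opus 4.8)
The plan is to fix a blowup structure $\Sa_\Delta^\Omega$ on $X_\Delta$ and verify, for each $\WQ \in \Omega$, the criterion of Proposition~\ref{lem:extendable}: that some base $m$ of $\WQ$ satisfies (1) $\lk_\G(v) \subseteq \lk_\G(m)$ for every $v$ split by $\WQ$, and (2) the two sides of $\WQ$ are "blind" to the components of $\G \setminus \st_\G(m)$, in the sense that any two vertices lying in a common component of $\G \setminus \st_\G(m)$ have all their signed copies on the same side of $\WQ$. The point is that reducedness of the $G$-action will force these conditions; if one of them fails, I will produce a $G$-orbit of hyperplanes contained in a treelike set, contradicting reducedness. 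I would begin by recalling that since $\Delta$ is $U^0$-invariant, Lemma~\ref{lem:InvariantSubgraph}(i) already guarantees there is no $v \in \G\setminus\Delta$ with $\lk_\G(v) \supseteq \lk_\G(m)$, so the only potential failure of (1) comes from vertices $v \in \Delta$ with $\lk_\G(v) \subseteq \lk_\G(m)$ but $v$ not split by $\WQ$; and since $\Delta$ is a full subgraph, $\lk_\G(v) \subseteq \lk_\G(m)$ together with $v,m \in \Delta$ gives $\lk_\Delta(v) \subseteq \lk_\Delta(m)$.

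Next I would translate the two conditions into statements about $\WQ$ purely inside $\Delta$. For (1): if some $v \in \Delta$ has $\lk_\G(v)\subseteq\lk_\G(m)$ but is not split by $\WQ$, then $v$ and $v\inv$ lie on a common side of $\WQ$, and I claim the hyperplane $H_v$ in $X_\Delta$ (labelled by $v$) together with its $G$-orbit can be collapsed. Here the key input is that $v$ is "parallel" to a base of $\WQ$: because $\lk_\Delta(v)\subseteq\lk_\Delta(m)$, there is a blowup structure in which the hyperplane dual to $v$ is a duplicate — more precisely, the singleton-partition trick of Section~\ref{sec:blowups} lets us see $H_v$ as sitting in a treelike set after an appropriate change of blowup structure (swapping which of the two parallel hyperplanes carries the $v$-label). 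One has to check this is compatible with the $G$-action: since $G$ permutes the fold-equivalence data and the conditions $\lk_\G(v)\subseteq\lk_\G(m)$ are preserved by $\rho(G)$ (as $\rho$ lands in $U^0(\AG)$ and $\Delta$ is $U^0$-invariant), the whole $G$-orbit of $H_v$ lies in a common treelike set. For (2): if $v_1, v_2$ lie in a common component $C$ of $\G\setminus\st_\G(m)$ but are separated by $\WQ$, then $\WQ$ is in effect "trying to be a partial conjugation by $m$" restricted incompatibly; again the relevant hyperplane orbit turns out to be collapsible, because the partition fails to genuinely separate the min-set structure in $\widetilde{X}_\Delta$ — I would use Lemma~\ref{lem:skewer} and Lemma 3.10 of \cite{BrCV} (already invoked in the proof of Proposition~\ref{prop:XDelta}) to see that such a hyperplane is not skewered by anything forced to stay, hence lies in a treelike set.

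The main obstacle, I expect, is the bookkeeping in showing that the offending hyperplane's entire $G$-orbit lies in a single treelike set (not just that the hyperplane individually does) — this is what makes "reduced" the right hypothesis rather than merely "no single collapsible hyperplane." For this I would argue that the property "$H$ is dual to a vertex-label $v$ with $\lk_\Delta(v)$ contained in the link of some base of a partition $\WQ$ that fails to split $v$" (resp. the analogous statement for (2)) is $G$-equivariant: $g \in G$ sends $H_v$ to a hyperplane dual to $v' = \rho(g)$-image of $v$, and since $\rho(g) \in U^0(A_\Delta)$ preserves the fold order and the component structure of $\Delta \setminus \st(\cdot)$, the image hyperplane has the same collapsibility property. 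Then one assembles all these into a single acyclic $G$-invariant set $\mathcal S$ and invokes Lemma~\ref{lem:CollapseSep} — but since $(X_\Delta, h_\Delta)$ is assumed reduced, $\mathcal S$ must be empty, i.e.\ conditions (1) and (2) hold for every $\WQ \in \Omega$, and Proposition~\ref{lem:extendable} gives extendability of each $\WQ$, hence of the blowup $\Sa_\Delta^\Omega$, hence of $X_\Delta$.
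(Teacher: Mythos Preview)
Your proposal has several genuine gaps.

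First, you have misstated the negation of condition (1). Condition (1) asks that $\lk_\G(v) \subseteq \lk_\G(m)$ for every $v$ \emph{split} by $\WQ$; its failure means there is some $v$ split by $\WQ$ with $\lk_\G(v) \not\subseteq \lk_\G(m)$ for every choice of base $m$. You instead consider $v$ with $\lk_\G(v) \subseteq \lk_\G(m)$ that are \emph{not} split --- the converse, which is irrelevant.

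Second, your contradiction argument cannot work as stated: vertex-labelled hyperplanes $H_v$ are \emph{never} in a treelike set, by definition (collapsing a treelike set yields $\Sa_\Delta$, which retains exactly one hyperplane per vertex of $\Delta$). So the claim ``$H_v$ together with its $G$-orbit can be collapsed'' is false on its face. The suggestion that $H_v$ becomes a duplicate after a change of blowup structure does not help: duplicates arise only from subdivision, and in any blowup structure the vertex hyperplanes are exactly those not in the treelike set.

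Third, and most fundamentally, you assume $g \in G$ sends the vertex hyperplane $H_v$ to another vertex hyperplane $H_{v'}$ with $v'$ the ``$\rho(g)$-image of $v$''. But $\rho(g)$ is an outer automorphism of $A_\Delta$, not a graph automorphism of $\Delta$; there is no well-defined image of $v$ as a vertex. In general $g \cdot H_v$ may well be a partition hyperplane $H_\WQ$. Indeed this is precisely the content of reducedness, read the other way: every partition hyperplane $H_\WQ$ has some vertex hyperplane $H_w$ in its $G$-orbit.

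The paper's proof exploits exactly this last observation. Given $\WQ \in \Omega$, reducedness provides $g \in G$ with $g \cdot H_\WQ = H_w$ for some $w \in \Delta$. One then studies how $g$ transports characteristic cycles: if $\chi$ is a characteristic cycle for $v$, the vertex labels along $g \cdot \chi$ spell a cyclically reduced word for $g_*(v)$, and the $U^0$-invariance of the double link $\dlk(v)$ forces every such vertex label $x_i$ to satisfy $\lk_\G(v) \subseteq \lk_\G(x_i)$. An inductive filtration argument on the partial order of $\G$-links then shows every $\WQ$ has a base $m$ whose $\G$-link is maximal among all vertices it splits, and that $G$ preserves this ``$\G$-link'' data on edge labels. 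Condition (2) is handled by a similar characteristic-cycle computation applied to the element $v_1 v_2$, using a minimal path in the CAT(0) subcomplex $\mathbb{C}^\Delta$ to relate the two cycles. The work is in tracking how $g_*$ acts on cyclic words via characteristic cycles, not in producing collapsible hyperplane orbits.
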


\begin{proof}  {Let $V$ be the set of vertices in $\Delta$.}
Choose a blowup structure $X_\Delta\cong \Sa_\Delta^\Omega$.   Since the $G$-action is reduced, for every $\WQ\in \Omega$ there exists $g\in G$ such that $g.H_\WQ=H_w$ for some $w\in V$. By Proposition~\ref{lem:extendable}, in order to verify extendability  we must show
\begin{enumerate}[(i)]
\item  There is some base $m$ of $\WQ$ such that $\lk_\G(v)\subseteq \lk_\G(m)$ for every $v$ split by $\WQ$.
\item
If $v_1$ and $v_2$ are in the same component of $\G\setminus\st_\G(m)$, then $v_1^\pm,v_2^\pm$ are all in the same side of $\WQ$.
\end{enumerate}

\textbf{Proof of (i).}
Let $\chi$ be a characteristic cycle for $v$ in $\Sa_\Delta^\Omega$.  The image  $g.\chi$   is a path that crosses each hyperplane of $\Sa_\Delta^\Omega$ at most once, so the edge-labels that are not in $\Omega$ spell a cyclic word that is the image of $v$ under $g$:
\begin{equation}\label{eqn:CycleImage} g(v)= x_1^{\epsilon_1} \cdots x_k^{\epsilon_k},\end{equation}
where all the $x_i$ are distinct and $\epsilon_i=\pm 1$.

\begin{claim}\label{claim:PreservingMax} If $x_i\in V$ labels an edge in $g.\chi$ then $\lk_\G(v)\subseteq \lk_\G(x_i)$. If $\WQ\in \Omega$ labels an edge in $g.\chi$ then $\WQ$ splits some $x_i$.
\end{claim}
\begin{proof}
 {The double link $\dlk(v)=\lk(\lk(v))$ is $U^0$-invariant by Proposition~\ref{cor:applications}(1).  Since $v\in\dlk(v)$ this implies that each $x_i$ appearing in Equation \ref{eqn:CycleImage} must be in $\dlk(v)$, \emph{i.e.} $\lk_\G(v)\subseteq\lk_\G(x_i)$.}
Now suppose $\WQ$ labels an edge of $g.\chi$, for some $\WQ\in \Omega$. We claim that $\WQ$ splits at least one of the $x_i$. Indeed, after choosing a basepoint $*\in X_\Delta$, we see that $g.\chi$ is freely homotopic to a concatenation of edge paths $\eta_1\eta_2\cdots \eta_k$  where $\eta_i$ is an edge-path based at $*$ representing the element $x_i$. Each $\eta_i$ is freely homotopic to a characteristic cycle for $x_i$, hence $\eta_i$ crosses the hyperplane $H_\WQ$ an odd number of times if and only if $\WQ$ splits $x_i$. Since $\eta_1\cdots \eta_k$ is freely homotopic to $g. \chi$, and $g.\chi$ crosses $H_\WQ$ exactly once, we must have that $\WQ$ splits some $x_i$.
\end{proof}

The proof of (i) now follows directly from the following claim.

\begin{claim}\label{claim:Well_Defined_Max} For every $\WQ\in \Omega$, there exists $m\in \Sing(\WQ)$ such that $\lk_\G(v)\subseteq \lk_\G(m)$ for every $v\in \Sing (\WQ)$. Moreover, defining $\lk_\G(\WQ)=\lk_\G(m)$,  the action of $G$ preserves $\lk_\G(E)$ for every edge label $E\in V\cup \Omega$.
\end{claim}
\begin{proof}
Define an increasing filtration $\emptyset=V_0 \subsetneq V_1\subsetneq \cdots \subsetneq V_r=V$ where  for $i\geq 1$, $V_{i}\setminus V_{i-1}$ consists of all $v\in \Delta$ such that $\lk_\G(v)$ is maximal among elements of $V\setminus V_{i-1}$. {Every partition $E$ in $\Omega$ splits some element of $V$,} so we can extend this to an increasing filtration $\emptyset=\F_0 \subsetneq \F_1\subsetneq \cdots \subsetneq \F_r=V\cup \Omega$, by letting $E\in\F_i$ if $H_E$ splits some generator of $V_i$. We will prove by induction that
\begin{enumerate}[(a)]
\item For every $\WQ\in \F_i$, there exists $m\in \Sing(\WQ)\cap V_i$ such that $\lk_\G(v)\subseteq \lk_\G(m)$ for every $v\in \Sing(\WQ)$.
\item Defining $\lk_\G(\WQ)=\lk_\G(m)$, then for any $A,B\in \F_i$, if $g.H_A=H_B$, then $\lk_\G(A)=\lk_\G(B)$.
\end{enumerate}

The base case $\F_0=\emptyset$ holds vacuously. Suppose by induction that for some $i\geq 1$ we have verified (a) and (b) for $\F_{i-1}$. Since the $G$-orbit of every $H_\WQ$ contains $H_w$ for some $w\in V$, it follows from (b) that $\F_{i-1}$ is the union of all $G$-orbits of elements of $V_{i-1}$. Consider now $\WQ \in \F_{i}\setminus \F_{i-1}$.  Then any generator in $\Sing(\WQ)$ lies in $V\setminus V_{i-1}$, and $\WQ$ splits some $m\in V_i\setminus V_{i-1}$. Let $g\in G$ be such that $g.H_\WQ=H_w$ for some $w\in V$. For any $v\in \Sing(\WQ)$ we know that  $\lk_\G(v)\subseteq \lk_\G(w)$  by Claim \ref{claim:PreservingMax}. In particular, $\lk_\G(m)\subseteq \lk_\G(w)$  and therefore $w\in V_{i}$. On the other hand, since $\F_{i-1}$ is a union of $G$-orbits and does not contain $\WQ$, we know that $w\notin V_{i-1}$.  Hence $w\in V_{i}\setminus V_{i-1}$ and therefore $\lk_\G(w)=\lk_\G(m)$ as $m$ is maximal among all elements of $V\setminus V_{i-1}$.  It follows that $\lk_\G(v)\subseteq\lk_\G(w)=\lk_\G(m)$ for any $v\in \Sing(\WQ)$, which proves (a).

Let $\chi$ be a characteristic cycle for $v\in V_i\setminus V_{i-1}$. Claim \ref{claim:PreservingMax} implies that any hyperplane crossed by $g. \chi$ has a $\G$-link that contains $\lk_\G(v)$, hence its label is in $\F_i$. Since $\F_{i-1}$ is a union of $G$-orbits, if some label appearing in $g.\chi$ is not in $\F_{i-1}$, it must be in $\F_i\setminus \F_{i-1}$, and therefore is equivalent to $v$. In particular, if $g.H_v=H_A$ then  $\lk_\G(A)=\lk_\G(v)$.  Since every label in $\F_i\setminus \F_{i-1}$ appears in such an orbit, we conclude that $G$ preserves the $\G$-link of each element of $\F_i$, which proves (b) and completes the inductive step.
\end{proof}

\textbf{Proof of (ii).}
Suppose $\WQ\in \Omega$ is based at $m$ and $v_1^\pm,v_2^\pm$ lie on opposite sides of $\WQ$.  Since $X_\Delta$ is a blowup of $\Sa_{\Delta}$, we know that $v_1,v_2$ lie in different components of $\Delta \setminus \st_\Delta(m)$.  We must show that they lie in different components of $\G \setminus \st_\G(m)$.

As shown in \cite{BrCV}, the inverse image of the (unique) vertex of $\Sa_\Delta$ under the collapse map $\Sa_\Delta^\Omega \to \Sa_\Delta$ is a CAT(0) subcomplex of $\Sa_\Delta^\Omega$, consisting of cubes whose edges are all labelled by partitions.  Denote this subcomplex by $\C^\Delta$.
Choose a minimal length edgepath $\alpha$ in $\C^\Delta$ between any characteristic cycle for $v_1$ and any characteristic cycle for $v_2$.  Then $\alpha$ crosses exactly those hyperplanes labeled by partitions containing $v_1^\pm$ and $v_2^\pm$ on opposite sides.  In particular, it crosses the hyperplane labeled $\WQ$. For $i=1,2$, let $\chi_i,$ be a characteristic cycle for $v_i$ starting at either end of $\alpha$, and consider the edgepath $\gamma=\chi_1\alpha\chi_2\bar{\alpha}$.  Under the collapse map to $\Sa_\Delta$, the loop $\gamma$ represents the element $v_1v_2$. Observe that by the minimality of $\alpha$, the hyperplanes crossed by $\alpha$, $\chi_1$ and $\chi_2$ are pairwise disjoint. Given an element $g\in G$, we have
\[g_*(\gamma)=g_*(\chi_1)g_*(\alpha)g_*(\chi_2)g_*(\overline{\alpha})=g_*(\chi_1)g_*(\alpha)g_*(\chi_2)\overline{g_*(\alpha)}\]
Since $\alpha$, $\chi_1$, and $\chi_2$ cross each hyperplane of $X_{\Delta}$ at most once, and cross pairwise distinct sets of hyperplanes, the same is true of $g_*(\alpha)$, $g_*(\chi_1)$, and $g_*(\chi_2)$. Therefore the hyperplanes crossed by $g_*(\gamma)$ that are not labeled by partitions define a (cyclic) word in the generators that is the image of $v_1v_2$ under the action of $g$. Thus we may write $g_*(v_1v_2)=w_1uw_2u^{-1}$
where \[u=y_1^{\delta_1}\cdots y_r^{\delta_r}\] is a word in pairwise distinct generators $y_j$ and $\delta_j=\pm1$.

Since $X_\Delta$ is reduced,   there exists some $g \in G$ such that $g$ maps the hyperplane labeled $\WP$ to a hyperplane labeled  by one of the $y_j$'s, and by Claim \ref{claim:Well_Defined_Max}, $m$ and $y_j$ belong to the same $\Gamma$-equivalence class.  It thus suffices to prove that $v_1$ and $v_2$ lie in different components of $\Gamma\setminus \st(y_j)$ for each $j$.

For any vertex $v$ in $\G$,  $\dlk(v)$ is $U^0$-invariant, up to conjugacy.  Thus, the cyclically reduced form of the word $g_*(v_1)$, namely $w_1$, must be a word in $\dlk(v_1)$  and similarly $w_2$ must be a word in $\dlk(v_2)$.  Choosing the representative of $g_*$ in $Aut(\AG)$  to be one that takes $v_1$ to $w_1$,  we then have $g_*(v_2)=uw_2u^{-1}$ where $u$ is a product of generators $y_i$ such that $\st(y_i)$ separates some element of $\dlk(v_1)$ from some element of $\dlk(v_2)$. But in this case, $\st(y_i)$ must also separate $v_1$ from $v_2$ as required.
\end{proof}

\section{ $\G$-complexes for joins and disjoint unions } \label{sec:joins}

In this section we describe procedures for constructing $\G$-complexes realizing $G$ when  $\G$ is either a join or a disjoint union of $U^0$-invariant subgraphs realizing the restriction of $G$.  We begin with the case where $\G=\G_1*\G_2$ is a join, which is straightforward.

\begin{proposition}\label{lem:ProductBlowup}If $\Gamma=\G_1*\G_2$ then any $\G$-complex   is a product of a $\G_1$-complex and a $\G_2$-complex.  {Conversely, any product of a $\G_1$-complex and a $\G_2$-complex is a $\G_1*\G_2 $-complex.}
\end{proposition}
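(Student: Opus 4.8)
The plan is to prove both directions by analyzing how $\G$-partitions and blowups behave when $\G = \G_1 * \G_2$ is a join. First I would observe the key structural fact about $\G$-partitions in a join: if $\WP = (\lk^\pm(x) \mid P_1 \mid P_2)$ is a $\G$-partition based at $x$, then $x$ belongs to one of the factors, say $x \in \G_1$. Since every vertex of $\G_2$ is adjacent to every vertex of $\G_1$, in particular to $x$, we have $\G_2^\pm \subseteq \lk^\pm(x)$. Thus $\Gamma^\pm \setminus \lk^\pm(x)$ is entirely contained in $\G_1^\pm$, so both sides $P_1, P_2$ (apart from the link part) live in $\G_1^\pm$, and $\WP$ is really a $\G_1$-partition with $\lk_{\G_1}^\pm(x)$ enlarged by all of $\G_2^\pm$. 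Consequently, any compatible collection $\Pi$ of $\G$-partitions splits as a disjoint union $\Pi = \Pi_1 \sqcup \Pi_2$ where $\Pi_i$ consists of partitions based in $\G_i$, and each $\Pi_i$ restricts to a compatible collection of $\G_i$-partitions. The compatibility check here is routine: two partitions based in the same factor have the same adjacency/side-disjointness relations whether viewed in $\G_i$ or in $\G$, and two partitions based in different factors are automatically adjacent since their bases commute across the join.

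Next I would identify the blowup $\SP$ with the product. Recall from \cite{CSV} that vertices of $\SP$ are consistent collections of sides $\{P^\times : \WP \in \Pi\}$. Writing $\Pi = \Pi_1 \sqcup \Pi_2$, such a collection is the same as a pair consisting of a consistent collection of sides for $\Pi_1$ and one for $\Pi_2$ — because any partition from $\Pi_1$ and any from $\Pi_2$ have commuting bases, hence are automatically consistent, so there is no interaction between the two families. This gives a bijection on vertices $\SP \leftrightarrow \Sa_{\G_1}^{\Pi_1} \times \Sa_{\G_2}^{\Pi_2}$. Edges of $\SP$ come either from switching a side of some $\WP \in \Pi$ (which happens within one factor) or are labeled by a vertex $v \in \G = \G_1 \sqcup_{\text{verts}} \G_2$ (again within one factor); and the squares are exactly the ``commuting'' squares, one edge from each factor or two edges within a factor. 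This is precisely the cube complex product structure. One should also check the maximal-tree/orientation data matches up, which follows from the analogous decomposition of $\Sa_\G = \Sa_{\G_1} \times \Sa_{\G_2}$ under the collapse map $c_\pi$.

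For the converse, given a $\G_1$-complex $X_1 = \Sa_{\G_1}^{\Pi_1}$ and a $\G_2$-complex $X_2 = \Sa_{\G_2}^{\Pi_2}$, I would reverse the above: each $\G_i$-partition $\WP \in \Pi_i$ extends to a $\G$-partition by replacing $\lk_{\G_i}^\pm$ with $\lk_{\G_i}^\pm \cup \G_j^\pm$ (the extension condition of Proposition~\ref{lem:extendable} is trivially satisfied here because everything in $\G_j$ is in the link), and the union $\Pi_1 \sqcup \Pi_2$ is compatible by the remark above. Then the computation of the previous paragraph run backwards shows $\Sa_\G^{\Pi_1 \sqcup \Pi_2} \cong X_1 \times X_2$, so the product is a $\G$-complex.

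The main obstacle I anticipate is not conceptual but bookkeeping: one must be careful that the \emph{combinatorial} data defining a blowup — not just the underlying cube complex but the treelike set of hyperplanes, their labels, orientations, and the induced isomorphism $X \sslash \mathcal{T} \cong \Sa_\G$ — all decompose compatibly as a product, and that the notion of ``compatible collection'' (allowing the multi-set subtleties with singletons and duplicates from Section~\ref{sec:blowups}) is preserved under the splitting and re-joining. In particular I would want to state carefully that a hyperplane of $\SP$ is dual to edges living in a single factor, so that the hyperplane set itself partitions according to $\G_1$ versus $\G_2$, which is what makes the product identification of treelike sets work.
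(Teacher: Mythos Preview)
Your proposal is correct and follows essentially the same approach as the paper's proof: split $\Pi$ into $\Pi_1 \sqcup \Pi_2$ according to which factor the base lies in, observe that $\G_j^\pm$ lands in the link of any partition based in $\G_i$, and check that compatibility is equivalent on both sides. The paper is considerably terser---it simply asserts the product decomposition $\SP \cong \Sa_{\G_1}^{\Pi_1} \times \Sa_{\G_2}^{\Pi_2}$ once the partition collections have been separated, without spelling out the vertex/edge/cube bijection or the converse direction as you do---so your more explicit bookkeeping (consistent collections of sides factoring as pairs, hyperplanes living in one factor) is a welcome elaboration rather than a different argument.
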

\begin{proof}
Suppose $\Sa_\G^\Pi$ is a blowup structure on a $\G$-complex $X$.  Write  $$\bPi=\{\WP_1,\ldots, \WP_k,\WQ_1,\ldots, \WQ_\ell\},$$where $\WP_i$ is based at $x_i\in \G_1$ and $\WQ_j$ is based at $y_j\in \G_2$.   (Note that if one base is in $\G_i$ then all bases are in $\G_i$).  Since $\G=\G_1\ast\G_2$  every $x_i$ is adjacent to every $y_j$, intersecting each $\WP_i$ with $V^\pm(\G_1)$  gives a $\G_1$-partition $\WP_i^1$ and intersecting  each $\WQ_j$  with $V^\pm(\G_2)$ gives a $\G_2$-partition $\WQ_j^2$. Thus, $\bPi$ is a compatible collection of $\G$-partitions if and only if $\bPi_1=\{\WP_1^1,\ldots, \WP_k^1\}$ is a compatible collection of $\G_1$-partitions and $\bPi_2=\{\WQ_1^2,\ldots, \WQ_\ell^2\}$ is a compatible collection of $\G_2$-partitions.  We conclude that
$X$ is a product of the $\G_i$ complexes with blowup structures  $\Sa_{\Gamma_1}^{\bPi_1}$ and $\Sa_{\Gamma_2}^{\bPi_2}$ respectively.
\end{proof}

\begin{proposition}\label{lem:RealizingJoin} Suppose $\G=\G_1*\G_2$ and let $\rho\colon G\rightarrow U^0(\AG)$ be a homomorphism.  Then $\G_1$ and $\G_2$ are $U^0$-invariant and if $(X_{i},h_i)$, $i=1,2$ are marked $\G_i$-complexes  that realize the restrictions $\rho_i$ of $\rho$ to $\G_i$,  their product $X_\G=X_{1}\times X_{2}$, equipped with the product action of $G$ and the product marking $h=h_1\times h_2$,  is a marked $\G$-complex realizing $\rho$. Moreover, the action of $G$ on $X_\G$ given by $\rho$ is  reduced if and only if the actions on $X_1$ and $X_2$ are.
\end{proposition}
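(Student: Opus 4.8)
The plan is to prove the three assertions of the proposition --- that $\G_1,\G_2$ are $U^0$-invariant, that $(X_\G,h)$ realizes $\rho$, and that reducedness is inherited in both directions --- each by a short argument resting on the join identifications $\AG=A_{\G_1}\times A_{\G_2}$ and $\SaG=\Sa_{\G_1}\times\Sa_{\G_2}$. For $U^0$-invariance of $\G_1$ (and symmetrically $\G_2$) I would check conditions (i) and (ii) of Lemma~\ref{lem:InvariantSubgraph}. For (i): if $x\in\G_1$ and $\lk_\G(x)\subseteq\lk_\G(y)$, then since every vertex of $\G_2$ is adjacent to $x$ we have $\G_2\subseteq\lk_\G(x)\subseteq\lk_\G(y)$; were $y\in\G_2$ this would force $y\in\lk_\G(y)$, which is impossible, so $y\in\G_1$. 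For (ii): if $y\in\G_2$ then $\G_1\subseteq\st_\G(y)$, so $\G_1\setminus\st_\G(y)=\emptyset$ and the hypothesis of (ii) cannot be met; if $y\in\G_1$ the conclusion is automatic.

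For the realization statement, $X_\G=X_1\times X_2$ is a $\G$-complex by Proposition~\ref{lem:ProductBlowup}, $h=h_1\times h_2$ is a homotopy equivalence onto $\Sa_{\G_1}\times\Sa_{\G_2}=\SaG$, and $f=f_1\times f_2$ is a $G$-action by cubical automorphisms. The key observation is that every generator of $U^0(\AG)$ is supported on a single join factor: an inversion of $x\in\G_1$ and a fold $x\mapsto xy$ with $x\in\G_1$ have all moved letters in $\G_1$ (the relation $\lk_\G(x)\subseteq\lk_\G(y)$ forces $y\in\G_1$ as above), and a partial conjugation by $x\in\G_1$ conjugates a component of $\G\setminus\st_\G(x)$, which is contained in $\G_1$ since $\G_2\subseteq\st_\G(x)$. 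Hence any $\phi\in U^0(\AG)$ has a lift $\widehat\phi=\widehat\phi_1\times\widehat\phi_2$ with $\widehat\phi_i\in\Aut(A_{\G_i})$, and since this lift preserves both $A_{\G_1}$ and $A_{\G_2}$, the definition of the restriction homomorphism (Proposition~\ref{prop:restriction}) gives $r_{\G_i}(\phi)=[\widehat\phi_i]$.

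Applying this to $\phi=\rho(g)$: the isometry $f_i(g)$ realizes $\rho_i(g)=r_{\G_i}(\rho(g))=[\widehat\phi_i]$, so $(h_if_i(g)h_i\inv)_*=\iota_{c_i}\circ\widehat\phi_i$ for some $c_i\in A_{\G_i}$, and therefore $(hf(g)h\inv)_*=(h_1f_1(g)h_1\inv)_*\times(h_2f_2(g)h_2\inv)_*=\iota_{(c_1,c_2)}\circ\widehat\phi$ represents $\rho(g)$ in $\Out(\AG)$, so $(X_\G,h)$ with the product action realizes $\rho$. For reducedness I would translate the combinatorial data of $X_\G$ into that of the factors. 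Every hyperplane of $X_1\times X_2$ has the form $H\times X_2$ or $X_1\times H'$; the action $f_1\times f_2$ preserves these two types, so hyperplane orbits of $X_\G$ are $O\times X_2$ or $X_1\times O'$ for orbits $O,O'$ in $X_1,X_2$. Combining Propositions~\ref{lem:procedure} and~\ref{lem:ProductBlowup}, a treelike set of $X_\G$ is exactly a disjoint union $\{H\times X_2: H\in\mathcal T_1\}\cup\{X_1\times H': H'\in\mathcal T_2\}$ with $\mathcal T_i$ a treelike set of $X_i$: any blowup structure on $X_\G$ is a product of blowup structures on $X_1$ and $X_2$, whose partition-labelled hyperplanes assemble in this way, and conversely collapsing such a union yields $(X_1\sslash\mathcal T_1)\times(X_2\sslash\mathcal T_2)\cong\Sa_{\G_1}\times\Sa_{\G_2}=\SaG$. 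Since $O\times X_2$ lies in such a set precisely when $O\subseteq\mathcal T_1$ (and likewise for type-two orbits), the action on $X_\G$ fails to be reduced if and only if the action on $X_1$ or on $X_2$ does; the contrapositive is the claim.

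I expect the only step requiring genuine (if mild) care to be the realization argument in the second and third paragraphs, specifically the verification that the restriction homomorphisms $r_{\G_i}$ are the two coordinate projections of $U^0(\AG)\to U^0(A_{\G_1})\times U^0(A_{\G_2})$ --- that is, that the standard lifts of the $U^0$-generators genuinely decompose along the join and commute across the two factors, so that $\rho(g)$ is literally the ``product'' of $\rho_1(g)$ and $\rho_2(g)$. The $U^0$-invariance check and the hyperplane/treelike-set bookkeeping for reducedness are then routine.
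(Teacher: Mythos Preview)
Your proof is correct and follows essentially the same approach as the paper. The one minor difference is in the $U^0$-invariance step: you verify conditions (i) and (ii) of Lemma~\ref{lem:InvariantSubgraph} directly, whereas the paper observes that $\G_1=\lk(\G_2)$ and $\G_2=\lk(\G_1)$ and invokes Proposition~\ref{cor:applications}(1), which is a bit slicker. For the realization and reducedness statements you supply more detail than the paper (which simply asserts that the product marking realizes $G$ and that reducedness follows from the product hyperplane structure), but the underlying ideas are the same.
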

\begin{proof}
Observe that $\G_1=\lk(\G_2)$ and $\G_2=\lk(\G_1)$, hence by Proposition~\ref{cor:applications}, both are proper $U^0 $-invariant subgraphs.   For $i=1,2$, suppose $X_{i}$ is a $\G_i$-complex and that $h_i\colon X_{i}\rightarrow \Sa_{\G_i}$ is a marking which realizes the restriction $\rho_i$. Define $X_{\G}=X_{1}\times X_{2}$ and let $G$ act via the product action. Blowup structures on $X_1$ and $X_2$ give a blowup structure to $X_1\times X_2$  by Proposition \ref{lem:ProductBlowup}, and the product marking $h=h_1\times h_2\colon  X_{\G}\rightarrow \Sa_\G$ realizes the action of $G$. The final statement of the lemma follows from the fact that hyperplanes of $X_\G$ are all of the form $H_1\times X_2$ for $H_1$ a hyperplane of $X_1$ or $X_1\times H_2$ for $H_2$ a hyperplane of $X_2$, and that the action of $G$ preserves the product decomposition.
\end{proof}

We next consider the case when $\Gamma$ is a disjoint union of (not necessarily connected) subgraphs.   Given a $\G_i$-complex $X_i$ for each subgraph $\G_i$ that is not a singleton, we construct a $\G$-complex $X$ that contains each $X_i$ as a subcomplex.

\begin{definition}[$\G$-amalgam] Suppose $\Gamma$ is a disjoint union $\G_1\sqcup \cdots \sqcup \G_k\sqcup \Lambda,$ where $\Lambda$ is discrete.   Let $Z_\Lambda$ be a graph satisfying
\begin{itemize}
\item The rank of $Z_\Lambda$ is equal to $|\Lambda|$,
 \item  $k$ vertices of $Z_\Lambda$ are labeled by $\{1,\ldots, k\}$,
 \item Any unlabeled vertex of $Z_\Lambda$ has valence at least 3.
\end{itemize}
For each $i\in \{1,\ldots, k\}$, let  $X_{i}$ be a $\G_i$-complex.
Form a new cube complex $Y_\Gamma$ called a \emph{$\Gamma$-amalgam} as follows. For each vertex $v$ of $Z_\Lambda$, set $X_v=X_{i}$ if $v$ is labeled by $i$, and set $X_v$ to be a point otherwise. Now construct a complex by starting from the disjoint union $\sqcup X_v$ and attaching an edge from $X_v$ to $X_w$ whenever $\{v,w\}$ is an edge in $Z_\Lambda$.  When $X_v=X_{i}$ we may attach the edge anywhere. The resulting complex can be given the structure of a cube complex: if the endpoint of one of the added edges lies at a point $p$ in the interior of a cube $C\subseteq X_{i}$, we perform the cubical subdivision of $C$ at $p$.  Define the resulting cube complex to be $Y_\Gamma$. (See Figure~\ref{fig:disjoint}.)

\end{definition}

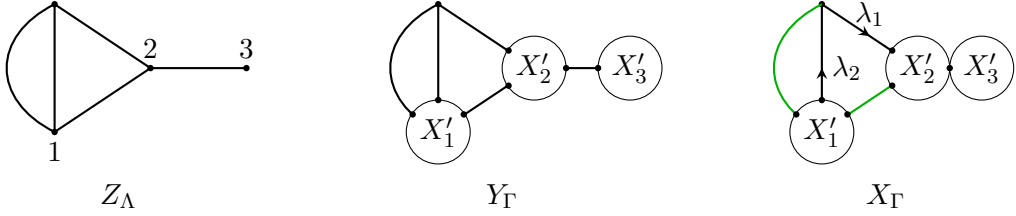
\begin{figure}\begin{center}
 \begin{tikzpicture} [scale=.85]
\begin{scope}[decoration={markings,mark = at position 0.5 with {\arrow{stealth}}}];
\vertex{(0,0)};\vertex{(0,2)};\vertex{(1.5,1)};\vertex{(3,1)};
  \draw[thick](0,0) to (0,2);
 \draw[thick] (0,0) to (1.5,1);
  \draw[thick] (0,2) to (1.5,1);
    \draw[thick] (1.5,1) to (3,1);
    \draw[thick] (0,0).. controls (-1,.5) and  (-1,1.5) .. (0,2);
 \node [below] (n1) at (0,0) {$1$};
        \node[above] (n2) at (1.5,1) {$2$};
            \node[above] (n3) at (3,1) {$3$};
            \node (Delta) at (1,-1) {$Z_\Lambda$};

    \begin{scope}[xshift = 6cm]:
 \vertex{(0,2)};
  \draw[thick] (0,0) to (0,2);
 \draw[thick] (0,0) to (1.5,1);
 \draw[thick] (0,2) to (1.5,1);
    \draw[thick] (1.5,1) to (3,1);
    \draw[thick] (0,0).. controls (-1,.5) and  (-1,1.5) .. (0,2);
 \draw[fill=white] (0,0) circle(.5);
  \draw[fill=white] (1.5,1) circle(.5);
    \draw[fill=white] (3,1) circle(.5);
    \node (S1) at (0,0) {$X'_1$};
        \node (S2) at (1.5,1) {$X'_2$};
            \node (S3) at (3,1) {$X'_3$};
             \node (Y) at (1,-1) {$Y_\Gamma$};
  \vertex{(0,.5)};\vertex{(2,1)};\vertex{(2.5,1)}; \vertex{(.4,.275)};\vertex{(-.4,.275)};\vertex{(1.1,1.275)};\vertex{(1.1,.725)};
            \end{scope}
                \begin{scope}[xshift = 12cm]:
                 \vertex{(0,2)};
\midarrow[thick] (0,0) to (0,2);\node[right] (l2) at (0,1) {$\lambda_2$};
 \draw[thick, darkgreen] (0,0) to (1.5,1);
  \midarrow[thick] (0,2) to (1.5,1);\node[above] (l1) at (.75,1.5) {$\lambda_1$};
    \draw[thick, darkgreen] (0,0).. controls (-1,.5) and  (-1,1.5) .. (0,2);
 \draw[fill=white] (0,0) circle(.5);
  \draw[fill=white] (1.5,1) circle(.5);
    \draw[fill=white] (2.5,1) circle(.5);
    \node (S1) at (0,0) {$X'_1$};
        \node (S2) at (1.5,1) {$X'_2$};
            \node (S3) at (2.5,1) {$X'_3$};
              \node (X) at (1,-1) {$X_\Gamma$};
              \vertex{(0,.5)};\vertex{(2,1)};; \vertex{(.4,.275)};\vertex{(-.4,.275)};\vertex{(1.1,1.275)};\vertex{(1.1,.725)};
            \end{scope}
         \end{scope}
 \end{tikzpicture}
 \end{center}\caption{Building a $\G$-complex for a disjoint union $\G=\G_1\sqcup \G_2\sqcup \G_3\sqcup\Lambda$.}\label{fig:disjoint}
 \end{figure}

Observe that $Y_\Gamma$ contains a subdivided copy  $X'_{i}$ of $X_{i}$ as a subcomplex for each each $i$.  Moreover, collapsing each of these subcomplexes separately to a point defines a natural map $ Y_\Gamma\rightarrow Z_\Lambda$, which is a bijection away from the $X_{i}$. Since each $X_{i}$ is a $\G_i$-complex, it does not have any separating hyperplanes. It follows that a hyperplane of $Y_\Gamma$ is separating if and only if it comes from a separating edge of $Z_\Lambda$.

\begin{proposition}\label{lem:FreeProductAmalgam} Let $\G=\G_1\sqcup\ldots\sqcup \G_k\sqcup \Lambda$ where  $\Lambda$ is discrete, and let $Y_\Gamma$ be a $\Gamma$-amalgam formed from $\G_i$-complexes  $X_{i}$ and a graph $Z_\Lambda$. Let $X_\Gamma$ be the complex formed from $Y_\Gamma$ by collapsing all separating edges from $Z_\Gamma$.  Then  $X_\G$ is a $\G$-complex, i.e. there exists a collection of $\Gamma$-partitions $\Pi$  such that  $X_\G\cong \Sa_\Gamma^\Pi$.
\end{proposition}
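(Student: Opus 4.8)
## Proof Proposal for Proposition~\ref{lem:FreeProductAmalgam}

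The plan is to exhibit an explicit compatible collection of $\G$-partitions $\Pi$ and verify that $\Sa_\G^\Pi$ is isomorphic to $X_\G$. First I would set up the combinatorial data. Since $\G = \G_1 \sqcup \cdots \sqcup \G_k \sqcup \Lambda$ is a disjoint union, every vertex of $\Lambda$ has empty link, and for each $i$ the subgraph $\G_i$ satisfies $\lk_\G(\G_i)=\emptyset$ and sits in a single component of $\G$. The idea is that the partitions of $\Pi$ come in two flavors: those inherited from a blowup structure on each $X_i$ (these are $\G_i$-partitions, which extend to $\G$-partitions because $\lk(\G_i)=\emptyset$ and any other vertex of $\G$ lies in a different component, so the extension criterion of Proposition~\ref{lem:extendable} is trivially satisfied once we put all vertices outside $\G_i$ on one side), and those coming from the non-separating edges of the collapsed graph $Z_\Lambda$, which will be partitions based at vertices of $\Lambda$. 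I would make this concrete by choosing a maximal tree and orientations/labels on $Z_\Lambda$ exactly as in the discrete case (Section~\ref{sec:blowups}): each non-tree edge gets labeled by a vertex of $\Lambda$, and each tree edge then determines a partition of $V^\pm$ based at some $\lambda\in\Lambda$.

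Next I would assemble the full collection $\Pi$. Fix a blowup structure $\Sa_{\G_i}^{\Pi_i}$ on each $X_i$; by Proposition~\ref{lem:extendable} (with the trivial extension putting $\G\setminus\G_i$ entirely on one side), each $\WQ\in\Pi_i$ extends to a $\G$-partition $\widehat{\WQ}$. Let $\Pi$ be the union of all these $\widehat{\Pi_i}$ together with the $\Lambda$-based partitions coming from the tree edges of $Z_\Lambda$ (after collapsing separating edges, i.e. working with the core graph). I would then check pairwise compatibility: two partitions from the same $\widehat{\Pi_i}$ are compatible because $\Pi_i$ was; a partition from $\widehat{\Pi_i}$ and one from $\widehat{\Pi_j}$ with $i\neq j$ are compatible because their non-trivial sides are disjoint (supported in disjoint subgraphs), so some side of one is disjoint from some side of the other; a $\Lambda$-based partition and a $\widehat{\Pi_i}$-partition are compatible because the $\Lambda$-partition has $\G_i$ entirely on one side; and two $\Lambda$-based partitions are compatible because this is precisely the compatibility verified for blowups of discrete graphs in \cite{CSV}. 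So $\Pi$ is a compatible collection.

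Finally I would identify $\Sa_\G^\Pi$ with $X_\G$. The cleanest route is to recognize $X_\G$ as built by the same gluing recipe as $\Sa_\G^\Pi$: collapsing all the hyperplanes of $\Sa_\G^\Pi$ labeled by the $\widehat{\Pi_i}$-partitions collapses each ``$X_i$-block'' to its Salvetti $\Sa_{\G_i}$ while leaving the $\Lambda$-structure untouched, yielding a complex that is the $Z_\Lambda$-style graph-of-Salvettis; conversely, the blocks $X_i'\subset X_\G$ are recovered by un-collapsing. More directly, I would use Proposition~\ref{lem:procedure}: it suffices to produce a treelike set of hyperplanes in $X_\G$, i.e. an acyclic collection whose collapse gives $\Sa_\G$. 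Take the hyperplanes of $X_\G$ dual to: (a) inside each block $X_i'$, the partition-hyperplanes of its chosen blowup structure, and (b) the tree edges of the core of $Z_\Lambda$. Collapsing (a) turns each $X_i'$ into $\Sa_{\G_i}$; collapsing (b) turns the resulting graph-of-Salvettis into the wedge $\Sa_{\G_1}\vee\cdots\vee\Sa_{\G_k}\vee(\text{wedge of }|\Lambda|\text{ circles})=\Sa_\G$. Acyclicity holds since each collapse is a deformation retraction. Then Proposition~\ref{lem:procedure} guarantees that this treelike set arises from a compatible collection $\Pi'$ of $\G$-partitions and $X_\G\cong\Sa_\G^{\Pi'}$; one checks $\Pi'$ is the collection $\Pi$ described above (or simply invokes Proposition~\ref{lem:procedure} directly without needing the explicit $\Pi$).

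The main obstacle I expect is the bookkeeping in verifying that the hyperplanes dual to tree edges of $Z_\Lambda$, together with the intra-block partition hyperplanes, genuinely form a treelike set after the separating-edge collapse — in particular checking that no hyperplane of $X_\G$ is accidentally a ``duplicate'' or fails the specialness conditions, and that the partitions one reads off via the recipe in Proposition~\ref{lem:procedure} satisfy the defining conditions of $\G$-partitions (each side nonempty, $x$ and $x^{-1}$ on opposite sides, sides are unions of components of $\G^\pm\setminus\lk^\pm(x)$). For the $\Lambda$-based partitions this reduces to the free-group/discrete case already handled in \cite{CSV}; for the block partitions it follows from extendability (Proposition~\ref{lem:extendable}). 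So the proof is really an assembly of already-established pieces, and the only real work is confirming the treelike set and the specialness of $X_\G$ as a cube complex, the latter following from the fact that $X_\G$ is built by cubical subdivision and edge-attachment from special complexes with non-separating hyperplanes.
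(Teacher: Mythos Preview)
Your overall strategy---choose blowup structures on the blocks, choose a maximal tree in $Z_\Lambda$, take the union of the corresponding partition-hyperplanes as a candidate treelike set, and then invoke the recipe following Proposition~\ref{lem:procedure}---is exactly the route the paper takes. However, two of your concrete claims are wrong, and they are precisely where the actual content of the proof lies.

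First, your description of how a $\G_i$-partition $\WQ\in\Pi_i$ extends to a $\G$-partition is incorrect. You propose the ``trivial extension putting $\G\setminus\G_i$ entirely on one side,'' but the partition that the hyperplane $H_\WQ$ actually determines in $X_\G$ depends on where the edges of $Z_\Lambda$ attach to $X_i$. In particular, vertices of $\Lambda$ can be \emph{split} by the extended partition: if the two half-edges of a $\lambda$-labelled loop in $Z_\Lambda$ land on opposite sides of $H_\WQ$ inside $X_i$, then $\lambda$ and $\lambda^{-1}$ lie in different sides. This is harmless for validity (elements of $\Lambda$ have empty link, so the result is still a legitimate $\G$-partition with the same base), but it means your explicit $\Pi$ does not describe the hyperplane structure of $X_\G$, and your compatibility check (``their non-trivial sides are disjoint, supported in disjoint subgraphs'') is based on a false premise. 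Also note that Proposition~\ref{lem:extendable} is stated for $U^0$-invariant subgraphs, which the $\G_i$ need not be here.

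Second, and more seriously, you defer the subdivision issue to your ``main obstacle'' paragraph without resolving it. The blocks inside $Y_\G$ are the \emph{subdivided} complexes $X_i'$, so the $\Pi_i$ you must work with are compatible \emph{multi-sets} containing singletons and duplicates. You must verify that these become honest, pairwise distinct $\G$-partitions after extension. The mechanism is exactly the one you got wrong above: the attached $Z_\Lambda$-edge that forced the subdivision lies on a characteristic cycle for some $w\in\Lambda$, and this $w$ (or $w^{-1}$) lands between the two duplicated hyperplanes, so it sits on opposite sides of their two extensions, distinguishing them; similarly it lands next to the singleton side, so the extended singleton partition has a second element on that side and is no longer a singleton. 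This is the heart of the paper's argument, and your proposal does not supply it.
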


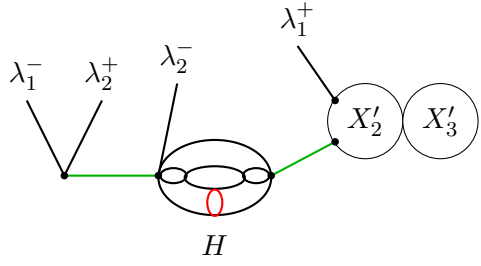
\begin{figure}\begin{center}
 \begin{tikzpicture} [scale=1]
 \coordinate (x) at (-.75,.275); \coordinate (y) at (0,.5); \coordinate (z) at (.75,.275);
 \coordinate (u) at (-2,.275); \coordinate (v) at (1.6,.725); \coordinate (w) at (1.6,1.275);
\begin{scope}[decoration={markings,mark = at position 0.5 with {\arrow{stealth}}}];
  \draw[thick, darkgreen] (z) to (v);
 \draw[thick, darkgreen] (x) to (u);
  \draw[thick] (1.1,2) to (w);\node[above] (l1) at (1.1,2) {$\lambda_1^+$};
 \draw[thick] (u) to (-2.5,1.275);\node[above] (L1p) at (-2.5,1.275) {$\lambda_1^-$};
\draw[thick] (u) to (-1.5,1.275);\node[above] (L2p) at (-1.5,1.275) {$\lambda_2^+$};
\draw[thick] (x) to (-.5,1.5);\node [above]  (L2m) at (-.5,1.5) {$\lambda_2^-$};
  \draw[fill=white] (2,1) circle(.5);
    \draw[fill=white] (3,1) circle(.5);
        \node (S2) at (2,1) {$X'_2$};
            \node (S3) at (3,1) {$X'_3$};
  \draw[thick] (0,.25) ellipse (.75cm and .5cm);
    \draw[thick] (0,.25) ellipse (.4cm and .15cm);
     \draw[thick] (-.55,.275) ellipse (.175cm and .1cm);
      \draw[thick] (.55,.275) ellipse (.175cm and .1cm);
       \vertex{(x)};
       \vertex{(z)};\vertex{(u)};\vertex{(v)};\vertex{(w)};
     \draw[thick,red] (0,-.085) ellipse (.1cm and .175cm);
     \node (H) at (0,-.65) {$H$};
         \end{scope}
 \end{tikzpicture}
\end{center}
\caption{ Determining the $\G$-partition associated to a hyperplane $H$ in $X_\G$. All hyperplanes in the treelike set for $X_1'$ other than $H$ have been collapsed in this figure  (see proof of Proposition~\ref{lem:FreeProductAmalgam})
}\label{fig:djunion}
 \end{figure}

\begin{proof}  First note that the hyperplanes of $X_\G$ consist of the hyperplanes of each $X'_{i}$ (which remain in $X'_{i}$) and the midpoints of non-separating edges of $Z_\Lambda$.   Choose a subdivided blowup structure on each $X'_{i}\subset Y_\G$, corresponding to a   compatible  multi-set of partitions $\Pi_i$.  Then choose a maximal tree $T_\Lambda$ in $Z_\Lambda$ and label and orient the edges not in $T_\Lambda$ by the elements of $\Lambda$.   Let $\mathcal T$ be the union of all hyperplanes with labels in the $\Pi_i$ and those dual to edges in $T_\Lambda$.   Collapsing all hyperplanes in $\mathcal T$ gives the Salvetti $\SaG$.

To see that $\mathcal T$ is the treelike set for a blowup structure, we need to check that each $H\in\mathcal T$ determines a \GW partition. Cut all edges of $Z_\Lambda$ that are labeled by elements of $\Lambda$,  labeling the initial half-edge $\lambda\inv$ and the terminal half-edge $\lambda$ (see Figure~\ref{fig:djunion}).  Each hyperplane in $\mathcal{T}$ 
now determines an evident partition of the vertices of $\G^{\pm}$.  First consider the partition associated to an edge $e$ of $T_\Lambda$.   Since we chose $T_\Lambda$ after collapsing each $X_i'$ to a point, no element of $\G_i$ is split by this partition. The fact that $Z_\Lambda$ has no separating edges implies that the partition associated to $e$ separates some $\lambda\in\Lambda$ from its inverse, so this gives a \GW partition based at $\lambda$.

A hyperplane in $X_i$ partitions the vertices  of $\G_i^\pm$, and the only new vertices of $\Gamma$ that might be split are in $\Lambda$, so have empty links, hence the resulting partition is still a valid \GW partition with its original base.

 We must also check that the duplicate partitions in the $\Pi_i$ give distinct $\G$-partitions, and the singleton partitions in the $\Pi_{i}$ give rise to legitimate  \GW\-partitions.  Singleton and duplicate partitions in $\Pi_{i}$ result from attaching an edge $e$ of $Z_\Lambda$ to the middle of a cube, which we then subdivide by duplicating all hyperplanes that intersect the cube. The edge $e$ lies on a characteristic cycle for some $w\in\Lambda$.

Let $H$ be one of the hyperplanes that has been  duplicated to form new hyperplanes $H'$ and $H''$, so that now $e$ terminates at a point between $H'$ and $H''$.    If $H$ was labeled by a partition $\WP$ in $\Pi_i$,  the new partitions $\WP'$ and $\WP''$ corresponding to $H'$ and $H''$ agree on  $\Delta^\pm$, but  either $w$ or $w\inv$ (depending on the orientation of the cycle) lies in opposite sides of the extensions of $\WP'$ and $\WP''$ to  $\G$-partitions.
  If $H$ was labeled by a vertex $v$, let $H'$ be the duplicate hyperplane that did {\em not} get the label  $v$, so $H'$ corresponds to a singleton partition $\mathcal S$.  Then $v$ (or $v\inv$) and $w$ (or $w\inv$) lie on the same side of the extension of $\mathcal S$ to $\G^\pm$, so the corresponding $\G$-partition is not a singleton.
 \end{proof}

In passing from $Y_\Gamma$ to $X_\Gamma$  in the proof of Proposition \ref{lem:FreeProductAmalgam}, we had to collapse the set of separating edges in $Z_\Gamma$. Since the collection of all separating hyperplanes in an NPC cube complex $X$ is acyclic and invariant under $\Aut(X)$,  Lemma~\ref{lem:CollapseSep}   implies that collapsing them has no effect on realizing actions of finite subgroups of $\Out(\AG)$ by isometries.

\begin{proposition}\label{lem:RealizingAmalgam}Let $\G=\G_1\sqcup\cdots \sqcup \G_k\sqcup\Lambda$ where $\Lambda$ is discrete and let $\rho\colon G\to U^0(\AG)$ be a homomorphism.
If  $(X_{i}, h_i)$  are marked  $\G_i$-complexes realizing the restriction of $\rho$ to $\G_i$, then there exists a marked $\G$-complex $(X_\G,h)$  realizing $\rho$ such that each $X_i$ is a subcomplex of $X_\G$ and   $h|_{X_i}=h_i$.
\end{proposition}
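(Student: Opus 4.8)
The plan is to realize $\rho$ on the complex $X_\G$ produced by Proposition~\ref{lem:FreeProductAmalgam}, choosing all the auxiliary data $G$-equivariantly. The key point is that the restriction homomorphisms $r_{\G_i}\colon U^0(\AG)\to U^0(A_{\G_i})$ from Proposition~\ref{prop:restriction} give, for each $i$, a homomorphism $\rho_i = r_{\G_i}\circ\rho\colon G\to U^0(A_{\G_i})$, and the hypothesis gives marked $\G_i$-complexes $(X_i,h_i)$ realizing $\rho_i$ via actions $f_i\colon G\to\Aut(X_i)$. First I would also need to understand the ``discrete part'': the restriction of $\rho$ to $A_\Lambda$ lands in $U^0(A_\Lambda)=\Out(F_{|\Lambda|})$ together with the permutation action on the free factors $A_{\G_1},\dots,A_{\G_k}$; more precisely, $\rho$ induces a $G$-action on the free product $A_{\G_1}*\cdots*A_{\G_k}*F_{|\Lambda|}$ preserving the conjugacy classes of the $A_{\G_i}$, so by the classical Realization Theorem for $\Out(F_n)$ (applied to the induced action on a suitable free quotient) there is a finite graph $Z_\Lambda$ of the required type — rank $|\Lambda|$, $k$ labeled vertices, unlabeled vertices of valence $\ge 3$ — carrying a $G$-action $f_Z\colon G\to\Aut(Z_\Lambda)$ that realizes this induced action, with $g$ sending the vertex labeled $i$ to the vertex labeled $g\cdot i$.

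With these pieces in hand, I would build $Y_\G$ as in the definition of a $\G$-amalgam, but make all choices $G$-equivariant: use the single complex $X_i$ at the vertex labeled $i$ (reindexing the $G$-orbit of free factors so that the $X_i$ in one orbit are literally the same complex, which is legitimate since $\rho$ identifies them up to the $\Out$-action), and for each edge of $Z_\Lambda$ attach its endpoint to a $G$-orbit of points in $\bigsqcup X_v$, subdividing equivariantly. The group $G$ then acts on $Y_\G$: on each $X_i'$ by (a subdivision of) $f_i(g)$ composed with the identification $X_i\to X_{g\cdot i}$, on the $Z_\Lambda$-skeleton by $f_Z(g)$, and compatibly on the attaching edges. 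Collapsing the $G$-invariant, acyclic set of separating edges coming from $Z_\Lambda$ gives $X_\G$, which is a $\G$-complex by Proposition~\ref{lem:FreeProductAmalgam}; by Lemma~\ref{lem:CollapseSep} the collapse does not destroy the faithfulness of the action, so $G$ acts on $X_\G$ by cubical isometries with each $X_i$ a subcomplex.

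It remains to produce the marking $h\colon X_\G\to\Sa_\G$ with $h|_{X_i}=h_i$ and to check it realizes $\rho$. Since $\Sa_\G = \Sa_{\G_1}\vee\cdots\vee\Sa_{\G_k}\vee\Sa_\Lambda$ is itself a wedge (graph) of the $\Sa_{\G_i}$ with a rose, I would build $h$ by mapping each $X_i'\subset X_\G$ via $h_i$, mapping a maximal tree of (the image of) $Z_\Lambda$ to the basepoint, and sending the remaining edges around the appropriate loops of $\Sa_\Lambda$ as dictated by the labels and orientations chosen in Proposition~\ref{lem:FreeProductAmalgam}; this is a homotopy equivalence because it is a homotopy equivalence on each piece and on the gluing graph. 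The verification that $h\circ f(g)\circ h^{-1}$ induces $\rho(g)$ on $\pi_1$ reduces, by the Seifert--van Kampen decomposition $\pi_1(X_\G) = \pi_1(X_1)*\cdots*\pi_1(X_k)*F_{|\Lambda|}$, to the statement that it induces $\rho_i(g)$ on each $A_{\G_i}$ factor (true by hypothesis on $(X_i,h_i,f_i)$, once we know $r_{\G_i}\circ\rho$ is what the restricted action realizes) and the correct element of $\Out(F_{|\Lambda|})$ plus permutation on the free part (true by the choice of $f_Z$).

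\textbf{Main obstacle.} The genuinely delicate step is the equivariance of the graph $Z_\Lambda$ together with the bookkeeping of basepoints and conjugating elements: a priori $\rho(g)$ only preserves each $A_{\G_i}$ up to conjugacy, so to glue equivariantly one must consistently choose representatives $\widehat{\rho(g)}\in\Aut(\AG)$ (and corresponding paths in $Y_\G$ realizing the conjugations) so that the actions on the vertex complexes $X_i$ and on $Z_\Lambda$ fit together into an honest group action rather than just an action up to homotopy. Matching this with the output of the $\Out(F_n)$ Realization Theorem — which produces a genuine action on a graph with valence $\ge 3$ — and checking that the resulting marking still satisfies $h|_{X_i}=h_i$ on the nose, is where the real work lies; everything else is a routine assembly of the constructions already set up in Sections~\ref{sec:XDelta}--\ref{sec:joins}.
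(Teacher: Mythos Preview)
Your overall architecture---build a $G$-equivariant graph $Z_\Lambda$, attach the $X_i$'s equivariantly, collapse separating edges via Lemma~\ref{lem:CollapseSep}, then invoke Proposition~\ref{lem:FreeProductAmalgam}---matches the paper's. The difference is in how the graph $Z_\Lambda$ with its $G$-action and attaching data is produced.

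The paper does not attempt this step from scratch: it cites Proposition~7.5 of \cite{HKFreeProd} (Hensel--Kielak for free products), which takes as input the finite extension $\overline{A}_\G$ determined by $G$, exhibits it via Bass--Serre theory as the fundamental group of a graph of groups with finite edge groups and vertex groups the $\overline{A}_{\G_i}$, and then blows up each vertex to the given $(X_i,h_i)$. This is precisely the ``relative'' realization theorem for finite groups acting on free products preserving a free factor system, and it packages exactly the basepoint/conjugator bookkeeping you flag as the main obstacle.

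Your proposed substitute---``the classical Realization Theorem for $\Out(F_n)$ applied to the induced action on a suitable free quotient''---does not close this gap. If you kill the normal closure of the $A_{\G_i}$ you get $F_{|\Lambda|}$, and realization there yields a $G$-graph of rank $|\Lambda|$ with no information about where the $k$ labelled vertices go or how the conjugating elements behave; if instead you crush each $A_{\G_i}$ to a cyclic group you get a free group of larger rank, but the induced map $G\to\Out(F_{k+|\Lambda|})$ need not be well-defined (the quotient $A_{\G_i}\to\Z$ is not canonical, and the $G$-action typically does not descend), and even when it is, the realized graph need not carry embedded loops corresponding to the factors. The attaching data---i.e.\ the choice of cosets $g_vA_{\G_i}$ at which to glue---is genuinely extra structure beyond any action on a free quotient, and producing it equivariantly is the whole content of the relative realization theorem. (A side remark: the factors are not permuted here, since each $\G_i$ is individually $U^0$-invariant; your mention of a ``permutation action on the free factors'' is unnecessary and would complicate the marking condition $h|_{X_i}=h_i$.)

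In short, your diagnosis of the obstacle is accurate, but the tool you reach for is too weak; the paper resolves it by invoking \cite{HKFreeProd} rather than the classical $\Out(F_n)$ theorem.
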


\begin{proof} Let $\overline{A}_\G$ and $\overline{A}_{\G_i}$ be the finite extensions of $A_\G$ and $A_{\G_i}$, respectively, determined by $G$. We apply Proposition 7.5 of \cite{HKFreeProd}
to $\G= \G_1\sqcup \cdots \sqcup \G_k\sqcup \Lambda$, with marked, complexes $(X_{i},h_i)$ as input. The result is a marked cube complex $X$ realizing the action of $G$ on $A_\Gamma$.
 In the construction (see Proposition 3.1 and Theorem 4.1 of \cite{HKFreeProd}), the marked complex $(X_\G, h)$ is formed from a graph of groups decomposition of $\overline{A}_\G$.  The edge stabilizers are all finite, vertex stabilizers are the corresponding finite extensions of the $\overline{A}_{\G_i}$, and the rank of the underlying graph is $|\Lambda|$.  Each vertex is then ``blown up" to a (possibly subdivided) copy of $X_{i}$ equipped with the marking $h_i$, to which edges are attached (see Remark 7.7 of \cite{HKFreeProd}), though we only subdivide where necessary, \emph{i.e.} where an added edge meets the interior of a cube.  In particular, collapsing each of the $X_{i}$ separately to points yields a graph of rank $|\Lambda|$. Thus, the hypotheses of Proposition \ref{lem:FreeProductAmalgam} are satisfied, and by Lemma \ref{lem:CollapseSep}, the resulting marked $\G$-amalgam $(X_\G, h)$ is a marked $\G$-complex which realizes $\rho$.
 \end{proof}

\begin{remark}\label{rem:subdivision}   In Proposition~\ref{prop:ExtendableAmalgam} the hypothesis that action of $G$ on $X_\Delta$  is reduced is essential.  In Proposition \ref{lem:RealizingAmalgam}, given a  graph $\G=\G_1\sqcup\ldots\sqcup \G_k\sqcup \Lambda$ and a collection of $\G_i$-complexes $X_i$, we constructed a $\G$-amalgam, $X_\G$, whose restriction to each $\G_i$ is a  subdivision of the given complexes $X_i$.  While we may assume that the original complexes $X_i$ are reduced, the resulting subdivisions need not be, since a subdivided  $e_v$ edge has one segment labeled $e_v$ and the rest labelled by partitions.  If $X_\G$ is not reduced, we can collapse an acyclic set of hyperplanes in $X_\G$ so that the resulting complex $X'_\G$ is reduced, and hence its restriction $X'_i$ to each ${\G_i}$-subcomplex is also reduced.  We claim that $X'_i$ is still a subdivision of the original $X_i$.  To see this, note that since the orbit of every edge $e_{\WP}$ in $X_i$ contains an $e_v$ edge, after subdividing, the orbit of at least one segment of this edge will also contains an $e_v$ edge.  Thus, this segment will not be collapsed in the reduction process.
\end{remark}

\section{Realizing finite subgroups of   $U^0(\AG)$}\label{sec:main}

 In this section we prove our main theorem.
 
 \begin{theorem}\label{thm:NRealization}  Let  $\G$ be a simplicial graph, $G$ a  finite group $G$  and $\rho\colon G\to U^0(\AG)$ a homomorphism.  Then there is a  $\Gamma$-complex $X$ with an untwisted marking $h\colon X\to\Sa_\G$ on which  $\rho$ is realized by isometries.
\end{theorem}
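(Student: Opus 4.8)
The plan is to induct on the \emph{chain length} of $\G$, defined as the length of the longest chain $\Delta_1 \subsetneq \Delta_2 \subsetneq \cdots$ of proper $U^0$-invariant subgraphs of $\G$. The base case is when $\G$ itself has no proper $U^0$-invariant subgraph except the minimal ones; by Proposition~\ref{lem:MinimalInvariant} every minimal $U^0$-invariant subgraph is a maximal fold equivalence class, hence discrete. I expect the genuine base case to be when $\G$ is itself discrete, so $\AG = F_n$ and $U^0(\AG) = \Out(F_n)$; here the classical Realization Theorem \cite{Cul, Khr, Zim} provides a graph $X$ with an action realizing $\rho$, which is exactly a $\G$-complex with an untwisted marking (every marking is untwisted when $\G$ is discrete).

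For the inductive step, I would split into cases according to the structure of $\G$. \textbf{Case 1: $\G = \G_1 * \G_2$ is a nontrivial join.} Then $\G_1 = \lk(\G_2)$ and $\G_2 = \lk(\G_1)$ are proper $U^0$-invariant subgraphs (Proposition~\ref{cor:applications}(1)), each of strictly smaller chain length, so by induction we get marked $\G_i$-complexes $(X_i, h_i)$ realizing the restrictions $\rho_i = r_{\G_i}\circ\rho$ (which land in $U^0(A_{\G_i})$ by Proposition~\ref{prop:restriction}). Proposition~\ref{lem:RealizingJoin} then assembles the product $X_1 \times X_2$ into a marked $\G$-complex realizing $\rho$. \textbf{Case 2: $\G = \G_1 \sqcup \cdots \sqcup \G_k \sqcup \Lambda$ is disconnected}, with $\Lambda$ discrete and each $\G_i$ a non-singleton connected component; each $\G_i$ is $U^0$-invariant by Proposition~\ref{cor:applications}(4), and again has smaller chain length, so induction gives marked $\G_i$-complexes realizing the restrictions. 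Proposition~\ref{lem:RealizingAmalgam} then builds a $\G$-amalgam $(X_\G, h)$ realizing $\rho$.

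\textbf{Case 3: $\G$ is connected and not a join.} This is the heart of the argument and where I expect the main obstacle to lie. Since $\G$ is connected, not a join, and not discrete (discrete connected graphs are points, handled in the base case), it properly contains some minimal $U^0$-invariant subgraph $\Delta$, which by Proposition~\ref{lem:MinimalInvariant} is a maximal fold equivalence class, hence discrete, and is a proper subgraph. I would like to set $\Sigma = \lk(\Delta)$ or pass to $\st(\Delta) = \Delta * \lk(\Delta)$ (which has empty link), use induction to realize the restriction $\rho_\Delta$ on a \emph{reduced} marked $\Delta$-complex $(X_\Delta, h_\Delta)$ (reduced by repeatedly collapsing $G$-orbits of acyclic hyperplanes, Lemma~\ref{lem:CollapseSep}), invoke Proposition~\ref{prop:ExtendableAmalgam} to conclude $X_\Delta$ is extendable, and then glue $X_\Delta$ into a $\G$-complex realizing all of $\rho$. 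The delicate points are: (a) choosing the right proper $U^0$-invariant subgraph to peel off so that the complement is controlled; (b) carrying out the actual gluing of an extendable $\Delta$-complex into a $\G$-complex in a $G$-equivariant way, using the subcomplex-identification of Section~\ref{sec:XDelta} and the extendability machinery of Section~\ref{sec:extendable}; and (c) bookkeeping of markings so that the resulting marking is untwisted and $h|_{X_\Delta} = h_\Delta$. I expect the construction in this case to proceed by taking a maximal proper $U^0$-invariant subgraph $\Delta$ (so that $\G \setminus \Delta$ is "minimal" in an appropriate sense, e.g. the partitions based outside $\Delta$ are forced to be simple), building the $\Delta$-complex by induction, verifying extendability via Proposition~\ref{prop:ExtendableAmalgam}, and then extending each partition and re-assembling; the compatibility of the extended partitions and the equivariance of the whole construction is the step most likely to require real work. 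Finally, once the marked $\G$-complex is built in all cases, Corollary~\ref{cor:NoLinkRestriction} and the uniqueness of $X_\Delta$ guarantee consistency, completing the induction.
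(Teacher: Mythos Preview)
Your inductive framework and base case are correct and match the paper exactly, and your Cases 1 and 2 are precisely the easy subcases $\Phi=\G_i$ and $\Phi=\emptyset$ in the paper's argument. The genuine gap is Case 3, where you correctly sense the difficulty but do not supply the ideas needed to close it.

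The paper's approach in the hard case is not simply ``extend the partitions of a reduced $\G_i$-complex and reassemble.'' Instead, having fixed a maximal proper $U^0$-invariant subgraph $\G_i$ and set $\Theta=\Delta\setminus\G_i$, the paper first proves a structural claim: \emph{every} $w\in\Theta$ has the \emph{same} link $\Phi:=\lk(w)\cap\G_i$ (this uses maximality of the chain together with Proposition~\ref{cor:applications}(5)). This yields the decomposition $\Delta=\G_i\cup\st_\Delta(\Phi)$ with $\st_\Delta(\Phi)=\Phi*(\Xi\sqcup\Theta)$, where $\Xi=\lk(\Phi)\cap\G_i$. One then builds two complexes---the inductively-given $X_i$ for $\G_i$ and a product/amalgam $X_{\st(\Phi)}$---and glues them along the common $(\Phi*\Xi)$-subcomplex, which exists and is unique by Corollary~\ref{cor:NoLinkRestriction}. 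None of this is visible in your sketch.

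Two further points you are missing entirely: (i) when $\Xi=\emptyset$ there is no canonical $\Phi$-slice in $X_\Phi\times X_\Theta$ to glue along, and the paper establishes a separate algebraic lifting claim (the restriction $\rho_\Theta$ lifts to $\Aut(A_\Theta)$) to produce a $G$-fixed point $p\in X_\Theta$ and glue along $X_\Phi\times\{p\}$; (ii) the verification that the glued object is actually a $\Delta$-complex requires specifying, case by case, how to extend each partition in $\Omega_i$ and $\Omega_{\st(\Phi)}$ to a $\Delta$-partition and checking pairwise compatibility---Proposition~\ref{prop:ExtendableAmalgam} gives extendability of individual partitions but says nothing about compatibility of the extensions, and this is where the geometry of the gluing is used. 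Your proposal anticipates that compatibility ``is the step most likely to require real work,'' which is accurate, but without the $\Phi,\Xi$ decomposition and the fixed-point argument you do not yet have the objects whose compatibility needs checking.
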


 We are especially interested in the case that $\rho$ is an inclusion, but   the proof is inductive and the general case is used in the induction.
The proof borrows a number of ideas from \cite{HK}.

\begin{proof}[Proof of theorem] For any $U^0$-invariant subgraph $\Delta$ we will say a marked $\Delta$-complex $(X_\Delta,h_\Delta)$ ``realizes $G$"  as a shorthand for ``realizes the  restriction $\rho_\Delta=r_\Delta\circ\rho\colon G\to U^0(A_\Delta).$"   Here the target of $h_\Delta$ is the subcomplex $\Sa_\Delta\subset\Sa_\G$.

We proceed by constructing  marked $\Delta$-complexes realizing $G$ for larger and larger $U^0$-invariant subgraphs $\Delta$ of $\G$, until we have one for all of $\G$.  Specifically, we define the {\em chain length}  $\ell=\ell(\Delta)$ to be the length  of  a maximal chain of $U^0$-invariant subgraphs $\emptyset=\G_0\subsetneqq \G_1\subsetneqq\cdots \subsetneqq \G_{\ell}\subsetneqq \Delta $   and proceed
 by induction on  $\ell(\Delta)$. At each stage we ensure that the $G$-action is reduced, so that the complexes we construct are extendable.

If $\ell(\Delta)=0$,  Proposition \ref{lem:MinimalInvariant}  says that  $\Delta$ is discrete, \emph{i.e.}  $A_{\Delta}$ is a free group.  The classical   Realization Theorem for free groups \cite{Cul,Khr,Zim} says we can find a marked graph $(X_\Delta,h_\Delta)$ on which $G$ is realized by isometries.
After collapsing $G$-invariant forests  we may assume  $X_\Delta$ is reduced (in particular has no separating edges), so  $(X_\Delta,h_\Delta)$ is the desired marked $\Delta$-complex.

Now  suppose $\ell(\Delta)=i\geq 1$.  Note that any proper  $U^0$-invariant subgraph of $\Delta$ has chain length strictly smaller than $i$, so we can construct marked complexes realizing $G$ for any such subgraph by induction.
Fix a maximal $U^0$-invariant chain $\emptyset=\G_0\subsetneqq \G_1\subsetneqq\cdots \subsetneqq \G_{i}\subsetneqq \Delta $ and let $\Theta=\Delta\setminus \G_{i}$. The next claim follows from maximality of the chain.

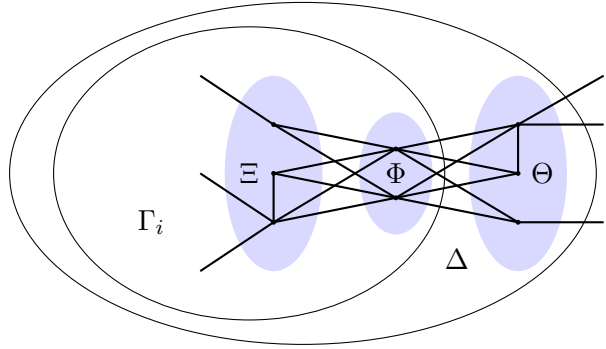
\begin{figure}
\begin{center}
\begin{tikzpicture}[scale=.65]
\draw (-3,0) ellipse (4cm and 3cm);
\draw (-1.9,0) ellipse (6cm and 3.5cm);
\fill[blue!15](0,0) ellipse (.75cm and 1.25cm);
\fill[blue!15] (-2.5,0) ellipse (1cm and 2cm);
\fill[blue!15] (2.5,0) ellipse (1cm and 2cm);
\coordinate(e1) at (-2.5,-1);\coordinate(e2) at (-2.5,0);\coordinate(e3) at (-2.5,1);
\coordinate(d1) at (0,-.5);\coordinate(d2) at (0,.5);
\coordinate(t1) at (2.5,-1);\coordinate(t2) at (2.5,0);\coordinate(t3) at (2.5,1);
\vertex{(d1)};\vertex{(d2)};
\vertex{(e1)};\vertex{(e2)};\vertex{(e3)};
\vertex{(t1)};\vertex{(t2)};\vertex{(t3)};
\draw[thick](e1) to (e2);
\draw[thick](t2) to (t3);
\draw[thick](t1) to (4.25,-1);
\draw[thick](t3) to (4.25,2);
\draw[thick](t3) to (4.25,1);
\draw[thick](e1) to (d1); \draw[thick](e1) to (d2);
\draw[thick](e2) to (d1); \draw[thick](e2) to (d2);
\draw[thick](e3) to (d1); \draw[thick](e3) to (d2);
\draw[thick](t1) to (d1); \draw[thick](t2) to (d1); \draw[thick](t3) to (d1);
\draw[thick](t1) to (d2); \draw[thick](t2) to (d2); \draw[thick](t3) to (d2);
\draw[thick] (e1) to (-4,-2);\draw[thick] (e1) to (-4,0); \draw[thick] (e3) to (-4,2);
\node (Xi) at (-3,0) {$\Xi$};
\node (Phi) at (0,0) {$\Phi$};
\node (Theta) at ( 3,0) {$\Theta$};
\node (Delta) at (1.25,-1.75) {$\Delta$};
\node (Gprime) at (-5,-1) {$\G_{i}$};
\end{tikzpicture}
\end{center}
\caption{Subgraphs of $\Delta$ referred to in the proof of Theorem~\ref{thm:NRealization}}\label{fig:LastStep2}
\end{figure}

{\bf Claim.}  If $w,w'\in\Theta$ then $\lk(w)\cap \G_{i}=\lk(w')\cap\G_{i}$.   
\begin{proof}[Proof of claim] By Proposition~\ref{cor:applications}(5), the subgraph spanned by $\G_{i}$ and all vertices adjacent to $\G_{i}$ is invariant, so by maximality of the chain either every element of $\Theta$ is adjacent to $\G_{i}$, or none of them are.  In the latter case, the claim is vacuously true because $\lk(w)\cap \G_{i}=\emptyset$ for every $w\in \Theta$.  In the former case, choose $w\in\Theta$ such that $\lk(w)\cap\G_{i}$ is maximal,  let $W=\{w'\in\Theta|\lk(w')\cap\G_{i}=\lk(w)\cap \G_{i}\}$ and let $\Delta'$ be the subgraph spanned by $\G_{i}$ and $W$.

  We will now show that $\Delta'$ is $U^0$-invariant, so by maximality of the chain $\Delta'=\Delta$ and $\Theta$ is the graph spanned by $W.$
 By Lemma~\ref{lem:InvariantSubgraph}, we need to prove
\begin{enumerate}
\item If $x \in \Delta'$ and $\lk(x) \subset \lk(y)$, then $y \in \Delta'$ and
\item If $\Delta'$ intersects more than one component of $\Gamma \backslash \st(y)$, then $y \in \Delta'$.
\end{enumerate}
We know that $\Delta$ is $U^0$-invariant, so under the hypotheses of either (1) or (2), $y$ must be in $\Delta$.  If $y \in \Gamma_i$, we are done, so assume instead that $y \in \Theta$. We need to show that $y \in W$.  If $x \in \Delta'$ and $\lk(x) \subset lk(y)$, the invariance of $\Gamma_i$,  {together with the assumption that $y \notin \Gamma_i$, guarantees that $x \notin \Gamma_i$.  That is,} $x \in W,$ so by maximality, $y$ is also in $W$.  In case (2), the invariance  of $\Gamma_i$ guarantees that $\st(y)$ does not separate $\Gamma_i$ so either it separates $\Gamma_i$ from $W$, or it separates two elements of $W$ from each other.  In either case, we must have $\lk(w) \subseteq \st(y)$, so
$\lk(w) \cap \Gamma_i \subseteq \lk(y) \cap \Gamma_i$, hence $y \in W$.
\end{proof}

Now set  $\Phi=\lk(\Theta)\cap \G_{i}$, and let $(X_{i},h_{i})$ be a reduced marked $\G_{i}$-complex  realizing $G$.    Note that $\Phi$ is $U^0$-invariant by  Proposition~\ref{cor:applications}.

 If $\Phi=\G_i$ then the above claim implies that $\Delta$ is the join  $\G_i\ast\Theta$, and that $\Theta=\lk(\G_i)\cap\Delta,$ so is $U^0$-invariant. By induction we can find  a reduced marked $\Theta$-complex $(X_\Theta,h_\Theta)$  which realizes the action of $G$, so by Proposition~\ref{lem:RealizingJoin} the product $(X_i\times X_\Theta, h_i\times h_\Theta)$ is a $\Delta$-complex realizing $G$. 
 
 If $\Phi=\emptyset$ then $\Delta$ is the disjoint union of $\G_i$ and $\Theta$, so by Proposition~\ref{lem:RealizingAmalgam} we can build a  $\Delta$-complex realizing $G$ using $(X_i,h_i)$ and  complexes for the  components of $\Theta$ that are not singletons (these components are $U^0$-invariant by Proposition~\ref{cor:applications}(4)). 

If $\Phi$ is a proper subgraph of $\G_i$, let $\Xi=\lk(\Phi)\cap \G_{i}$ (see  Figure~\ref{fig:LastStep2}).  Now   
$$\Delta= \G_i\cup \st_\Delta(\Phi)=\G_i\cup  \big(\Phi\ast (\Xi \sqcup\Theta)\big)$$ 
and $\G_i\cap\st_\Delta(\Phi)=\Phi\ast \Xi$ .  
Both $\Phi$  
and $\Xi$  
are $U^0$-invariant, so  $\Phi\ast\Xi$ is also $U^0$-invariant by Proposition~\ref{cor:applications}(3).  
Since $\lk_{\G_i}(\Phi\ast \Xi)=\emptyset$, $X_i$ contains a unique invariant (possibly subdivided) $(\Phi\ast\Xi)$-complex $X_{\Phi\ast\Xi}=X_\Phi\times X_\Xi$ realizing $G$,   by Corollary \ref{cor:NoLinkRestriction}.   {We may choose $h_i$ so that it restricts to a marking $h_{\Phi\ast \Xi}=h_{\Phi}\times h_{\Xi}$ on $X_{\Phi*\Xi}$.}

The subgraph $\st_\Delta(\Phi)$ is also $U^0$-invariant, and we   build a $\st_\Delta(\Phi)$-complex $X_{\st(\Phi)}$ realizing $G$ as follows.  We first build a complex $X_{\lk(\Phi)}$ for $\lk_\Delta(\Phi)=\Xi\sqcup\Theta$  using a copy of the complex $X_\Xi$ we already found in $X_i$ and  complexes for the  components of $\Theta$ that are not singletons, as we did in the case that $\Phi=\emptyset$ above.   {After reducing, Proposition \ref{prop:ExtendableAmalgam} ensures that $X_{\lk(\Phi)}$ is extendable. By Remark \ref{rem:subdivision}, the reduced complex still contains a subdivided copy of $X_{\Xi}$.} We then take  the  product of $X_{\lk(\Phi)}$ with a copy of the complex $X_\Phi\subset X_i$ to obtain a complex for $X_{\st(\Phi)}$ realizing $G$  {with respect to the product marking.}

If $\Xi\neq\emptyset$ then  $\lk_{\st(\Phi)}(\Phi\ast\Xi)=\emptyset,$ so the complex $X_{\st(\Phi)}$ that we just built contains a unique (possibly subdivided) $(\Phi\ast\Xi)$-complex realizing $G$.  By construction, this is identical to the complex $X_{\Phi\ast\Xi}$ contained in $X_i$, so we may glue $X_i$ to $X_{\st(\Phi)}$ by identifying these subcomplexes, thus forming a new complex $X_\Delta$.  {The markings $h_i$ and $h_{\st(\Phi)}$ agree on $X_{\Phi\ast\Xi}$, so we obtain a marked complex $(X_\Delta,h_\Delta)$ realizing $G$.}

If $\Xi=\emptyset$ then $\lk_{\st(\Phi)}(\Phi)=\Theta$, so $X_{\st(\Phi)}=X_\Phi\times X_\Theta$.  
 The following claim will allow us to pick out a particular slice $X_\Phi\times \{p\}\subset X_\Phi\times X_\Theta$ to glue to the (unique) copy of $X_\Phi$ contained in $X_i$.

{\bf Claim.} \label{lem:Claim9} The restriction  $\rho_\Theta\colon G\to U^0(A_\Theta)$  lifts to a homomorphism $G\to \Aut(A_\Theta)$.

\begin{proof}[Proof of claim] We use Lemma 2.2 of \cite{CCV}, which says 
\begin{itemize}
\item[($\ast$)]If $\Sigma$ is a subgraph of $\Delta$ then the normalizer in $A_\Delta$ of $A_\Sigma$ is $A_{\st_\Delta(\Sigma)}=A_\Sigma\times A_{\lk_\Delta(\Sigma)}$. 
\item[($\ast\ast$)] If $\Sigma_1,\Sigma_2\subset \Delta$  then $xA_{\Sigma_2} x\inv\leq  A_{\Sigma_1}$ if and only if  $\Sigma_2\subset \Sigma_1$ and $x=x_1x_2$ with $x_1\in N_{A_\Delta}(A_{\Sigma_1})$ and $x_2\in N_{A_\Delta}(A_{\Sigma_2})$. 
\end{itemize} Since all normalizers  will be taken with respect to $A_\Delta$, for the rest of this proof we omit $A_\Delta$ from the notation for normalizers.

We are assuming $\Xi=\emptyset,$ so $\Phi=\lk_\Delta(\Theta)$ and $\Theta=\lk_\Delta(\Phi)$.  Then ($\ast$) says $N(A_\Phi)=N(A_\Theta)=N(A_{\Phi*\Theta})=A_{\Phi*\Theta}$.  Furthermore,  if $\lk_\Delta(\G_i) \neq \emptyset$, then $\G_i = \Phi$ and $\Delta = \Phi *\Theta$.  We have already taken care of this case, so  we may assume that
$\lk_\Delta(\G_i)=\emptyset$ and we have  $N(A_{\G_i})=A_{\G_i}$.

Let $g\in G$.  Since $\G_i,$ $\Theta,$ $\Phi$ and $\Phi*\Theta$ are all $U^0$-invariant, the corresponding special subgroups of $A_\G$ are all sent to conjugates of themselves by any representative of $g$ in $\Aut(\AG)$.
Pick a representative $\widehat g$ that sends $A_{\G_i}$ to itself, and suppose $\widehat g(A_\Phi)=x  A_\Phi x\inv$.  Since $\widehat g(A_\Phi)\leq \widehat g(A_{\Gamma_i})=A_{\Gamma_i}$, ($\ast\ast$) says that $x=x_1x_2$ with $x_1\in N(A_{\G_i})=A_{\G_i}$ and $x_2\in N(A_\Phi)$.   Then
$x A_\Phi x\inv=x_1x_2  A_{\Phi}x_2\inv x_1\inv=x_1 A_{\Phi} x_1\inv$, so after composing $\widehat g$ with conjugation by $x_1$, we may assume $\widehat g$ sends both $A_{\G_i}$ and $A_\Phi$ to themselves.

Now $\Phi\subset\Phi*\Theta$ so $\widehat g(A_\Phi)\leq \widehat g(A_{\Phi*\Theta})=y A_{\Phi*\Theta}y\inv$ for some $y$. Since $\widehat g(A_\Phi)=A_\Phi$, this gives $y\inv A_\Phi y \leq A_{\Phi*\Theta}$, so by ($\ast\ast$) $y \in N(A_{\Phi*\Theta})N(A_\Phi)=A_{\Phi*\Theta}$, \emph{i.e.} $\widehat g(A_{\Phi*\Theta})=y  A_{\Phi*\Theta}y\inv=  A_{\Phi*\Theta}$.  Finally, $\Theta\subset\Phi*\Theta$ so $\widehat g(\Theta)=z A_\Theta z\inv \leq \widehat g(\Phi*\Theta)=\Phi*\Theta$ for some $z$, so ($\ast\ast$) says $z\in N(A_{\Phi*\Theta})N(A_\Theta)=A_{\Phi*\Theta}=N(A_\Theta)$, so $\widehat g(A_\Theta)=A_\Theta$ as well.

Now let $g_1,g_2\in G$ with $g_1g_2=g_3$,  and find representatives $\widehat g_1,\widehat g_2$ and $\widehat g_3$ as above.  We know that $\widehat g_1\widehat g_2\widehat g_3\inv$ is inner and preserves $A_{\G_i}$ and $A_\Theta$,  so the conjugating element lives in $N(A_{\G_i})\cap N(A_\Theta)=A_{\G_i}\cap A_{\Phi*\Theta}=A_\Phi$.  But conjugation by an element of $A_\Phi$ is trivial on $A_\Theta$, \emph{i.e.} the restriction of $\widehat g_1\widehat g_2\widehat g_3\inv$ to $A_\Theta$ is the identity. Thus $g\mapsto \widehat g$  gives a lift of $G$ to $\Aut(A_\Theta)$.
\end{proof}

By the claim,  the action of $G$ on $X_\Theta$ lifts to an action on $\widetilde{X}_\Theta$.  Since $\widetilde X_\Theta$ is $\textrm{CAT}(0)$ this action has a fixed point;  projecting this fixed point back down gives a fixed point $p\in X_\Theta$.
We now build $X_{\Delta}$ by gluing $X_{i}$ to $X_{\Phi*\Theta}=X_{\Phi}\times X_\Theta$ along their common subspace $X_\Phi = X_\Phi\times \{p\}$.  {As above, the markings $h_i$ and $h_{\Phi*\Theta}$ agree on the overlap, so give a marking $h_\Delta\colon X_\Delta\rightarrow \Sa_\Delta$ realizing $G$.}

 It remains to check that the complexes $X_\Delta$ that we have just built are actually $\Delta$-complexes.   We start by choosing a blowup structure  $X_i\iso \Sa_{\G_i}^{\Omega_i}$.  Since the action of $G$ on $X_i$ is reduced, this blowup structure is extendable, and induces (possibly subdivided) extendable blowup structures on the subcomplexes $X_\Phi, X_\Xi$ and $X_\Phi\times X_\Xi$.  
  
If  $\Xi=\emptyset$ then finding a blowup structure is slightly easier, so we do that case first. 
 In this case $\Theta$ is $U^0$-invariant, and we can choose an extendable blowup structure   $\Sa_\Theta^{\Omega_\Theta}$ on $X_\Theta$. Recall that we may have needed to subdivide $X_\Theta$ in order to make the fixed point a vertex; this means that $\Omega_\Theta$ may contain trivial or duplicate partitions. The fixed point $p\in X_\Theta$ now lies in a {\em region}, \emph{i.e.} a  consistent  choice of sides for each element of $\Omega_\Theta$.  The structure on $X_\Theta$  together with the blowup structure on $X_\Phi$ now give  a blowup structure on $X_\Phi\times X_\Theta$, by Proposition~\ref{lem:ProductBlowup}.
   The partitions in $\Omega_i$ and $\Omega_\Theta$ are all extendable, so in particular can be extended to $\Delta$.  We need to find extensions that form a compatible collection of $\Delta$-partitions.

If $\WP\in\Omega_i$ we need to decide where to put the vertices $v^\pm\in\Theta^\pm$ in our extension $\widehat\WP$. If $\WP$ is based at $m\in  \Phi$, they must all go into $\lk(\widehat\WP)$.   If  $m$ is distance at least 2 from $\Theta$,   there is some $u\in\Phi$ with $u\not\in \lk(m)$ (since $\Xi=\emptyset$), so all vertices of $\Theta$ are in the same component of $\Delta\setminus\st(m)$ as $u$, so all of $\Theta^\pm$ must go into the same side of $\WP$ as $u$.  (Note that the extendability of $\WP$ guarantees that since any two choices for $u$ lie in the same component of  $\Delta\setminus\st(m)$, they must lie on the same side of $\WP$, so there is no ambiguity here.)  We also need to extend the partitions  $\WQ\in \Omega_\Theta$ to $\Delta^\pm$.  All vertices in $\Phi$ must go into the link of each extension. Since $\G_i$ is $U^0$-invariant, no vertex in $\Theta$ has a star that separates $\G_i$, so vertices of $\G_i\setminus \Phi$ and their inverses all have to go in the same side of $\WQ$, for each $\WQ\in\Omega_\Theta$. We  put them all into the  region determined by the fixed point $p$.  It is now routine to check that all the extensions $\widehat\WQ$  and $\widehat\WP$ we have constructed are compatible.
This verifies that $X_\Delta$ is a $\Delta$-complex in the case $\Xi=\emptyset$.  If the $G$-action on $X_\Delta$ is not reduced, we can reduce it to obtain an extendable $\Delta$-complex.

If $\Xi\neq\emptyset$ we can find a blowup structure  $X_{\lk(\Phi)}\iso \Sa_{\lk(\Phi)}^{\Omega_{\lk(\Phi)}}$ that restricts to the given blowup structure on   $X_\Xi$ by By Proposition~\ref{lem:FreeProductAmalgam}.

The procedure we used in the case $\Xi=\emptyset$ to extend $\WP=(P_1|P_2|\lk(\WP))\in\Omega_i$ to $\Delta^\pm$ works again unless $\WP$ is based at $m\in\Xi$.   In this case  each $\WP\cap\Xi^\pm$ is also the restriction of a partition $\WQ=(Q_1|Q_2|\lk(\WQ))\in\Omega_{\st(\Phi)}$.  We form $\widehat\WP$ by adding $Q_i\cap\Theta^\pm$ to $P_i$ for $i=1,2$.

We also need to extend partitions $\WQ\in\Omega_{\st(\Phi)}$ that are based at $m\in\Theta$ to $\Delta$-partitions $\widehat\WQ$. The star $\st(m)$ cannot disconnect $\Xi$ or $\G_i$ since both are  $U^0$-invariant, so we add all of $\G_i\setminus \Phi$ to  the same side of $\widehat\WQ$ as $\Xi$.

 The extensions $\widehat\WP$ and $\widehat\WQ$ are now a compatible collection of $\Delta$-partitions, giving $X_{\Delta}$ a blowup structure.
 Reducing $X_\Delta$ if necessary, this completes the induction and  concludes the proof of the theorem.
\end{proof}

Recall from~\cite{CSV} that $K_\G$ is  a contractible subspace of $\OG$ which is invariant under the action of $U^0(\AG)$.  Points of $K_\G$ are $\G$-complexes with untwisted markings, so if $\rho$ is an inclusion, Theorem~\ref{thm:NRealization} gives the following statement.  

\begin{corollary}\label{cor:FixedPoint} The action of any finite subgroup of $U^0(A_\G)$ on the   $K_\G$ has a fixed point.  
\end{corollary}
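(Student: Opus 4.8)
The plan is to deduce the corollary directly from Theorem~\ref{thm:NRealization} by translating ``realized by isometries'' into the language of fixed points for the $U^0(\AG)$-action on $K_\G$. Recall from \cite{CSV} that a point of $K_\G$ is represented by a $\G$-complex $X$ equipped with an untwisted marking $h\colon X\to\Sa_\G$, two such pairs $(X,h)$ and $(X',h')$ representing the same point precisely when there is a cubical isometry $X\to X'$ intertwining the markings up to homotopy; and that an element $\phi\in U^0(\AG)$ acts on $K_\G$ by altering the marking by a representative of $\phi$. Consequently $\phi$ fixes the point represented by $(X,h)$ if and only if there is a cubical automorphism $f_\phi$ of $X$ with $h\circ f_\phi\circ h\inv$ inducing $\phi$ on $\pi_1(\Sa_\G)=\AG$.

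Now let $G<U^0(\AG)$ be finite and take $\rho\colon G\hookrightarrow U^0(\AG)$ to be the inclusion. Theorem~\ref{thm:NRealization} produces a $\G$-complex $X$, an untwisted marking $h\colon X\to\Sa_\G$, and an action $f\colon G\to\Aut(X)$ by cubical automorphisms with $h\circ f(g)\circ h\inv$ inducing $\rho(g)=g$ on $\AG$ for every $g\in G$. Since $h$ is an untwisted marking, the pair $(X,h)$ represents a point of $K_\G$, and by the criterion of the previous paragraph this says exactly that every $g\in G$ fixes that point. Hence $G$ has a fixed point in $K_\G$; and since $K_\G$ is a $U^0(\AG)$-invariant subspace of $\OG$, this is also a fixed point for $G$ acting on $\OG$.

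The corollary carries essentially no content beyond Theorem~\ref{thm:NRealization} itself, so there is no real obstacle. The only point requiring care is bookkeeping with conventions: one must check that the notion of ``$(X,h)$ realizes $\rho$ by isometries'' from Section~\ref{sec:invariant} coincides with the stabilizer condition for the $U^0(\AG)$-action on $K_\G$ used in \cite{CSV}, and that the marked complex furnished by the theorem indeed lies in $K_\G$ and not merely in $\OG$ --- both of which are immediate from the definitions, since $K_\G$ is by construction the locus of $\G$-complexes carrying untwisted markings.
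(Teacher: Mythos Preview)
Your proof is correct and follows the same approach as the paper: apply Theorem~\ref{thm:NRealization} with $\rho$ the inclusion, then observe that a marked $\G$-complex with untwisted marking on which $G$ is realized by isometries is precisely a point of $K_\G$ fixed by $G$. The paper treats this as immediate from the description of points of $K_\G$, and your added discussion of the equivalence relation and the action simply makes explicit what the paper leaves implicit.
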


Finally, we obtain the following information about finite subgroups of $U^0(\AG)$.

\begin{corollary}\label{cor:FinitelyMany} The group $U^0(A_\G)$ contains only finitely many conjugacy classes of finite subgroups.
\end{corollary}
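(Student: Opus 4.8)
The plan is to combine Corollary~\ref{cor:FixedPoint} with the finiteness of the combinatorial data underlying a point of $K_\G$. Every finite subgroup $G<U^0(\AG)$ fixes a point of $K_\G$; unwinding the definitions, this means $G$ is realized on some marked $\G$-complex $(X,h)$, acting on $X$ by cubical isometries via an injective homomorphism $f\colon G\hookrightarrow\Aut(X)$ with $(h\circ f(g)\circ h\inv)_*=g$ for every $g\in G$. The first observation is that there are only finitely many $\G$-complexes up to cubical isomorphism: each carries a blowup structure $\Sa_\G^\Pi$ for some compatible collection $\Pi$ of $\G$-partitions, and there are only finitely many such collections since $\G^\pm$ is a finite graph. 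Fixing representatives $X_1,\dots,X_N$ of the isomorphism types, each automorphism group $\Aut(X_j)$ is finite, hence has only finitely many conjugacy classes of subgroups.

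The heart of the argument is to show that a finite subgroup of $U^0(\AG)$ is determined, up to conjugacy in $U^0(\AG)$, by the isomorphism type of $X$ together with the $\Aut(X)$-conjugacy class of the image $f(G)\leq\Aut(X)$. Given two realizations $(X,h,f)$ and $(X,h',f')$ of finite subgroups $G,G'$ with $f'(G')=b\,f(G)\,b\inv$ for some $b\in\Aut(X)$, I would first replace $h'$ by $h'\circ b$ (still an untwisted marking, so still a point of $K_\G$) to reduce to the case $f(G)=f'(G')=:A$. Writing $\Psi_h(a)=(h\circ a\circ h\inv)_*$ for $a\in A$ defines an isomorphism $A\to G$, and likewise $\Psi_{h'}\colon A\to G'$; moreover $\phi:=(h'\circ h\inv)_*$ satisfies $\phi\,\Psi_h(a)\,\phi\inv=\Psi_{h'}(a)$, so $\phi G\phi\inv=G'$. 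One checks that $\phi$ lies in the untwisted subgroup $U(\AG)$, and --- using that $U(\AG)=U^0(\AG)\rtimes Q$ with $Q\leq\Aut(\G)$, and that a graph automorphism can be absorbed by changing the blowup structures used to choose $h$ and $h'$ (the lemma at the start of Section~\ref{sec:XDelta}) --- one may take $\phi\in U^0(\AG)$.

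Granting this, the corollary follows: every finite subgroup of $U^0(\AG)$ is $U^0(\AG)$-conjugate to one realized with $f(G)$ equal to a fixed representative of one of the finitely many pairs consisting of an isomorphism type $X_j$ and a conjugacy class of subgroup of $\Aut(X_j)$, so there are only finitely many conjugacy classes. I expect the main obstacle to be exactly the conjugacy translation in the middle paragraph: keeping track of basepoints and, more seriously, of the discrepancy between $U(\AG)$ and $U^0(\AG)$, so that $\phi$ can be arranged to lie in $U^0(\AG)$. This bookkeeping can be avoided entirely by arguing abstractly: $U^0(\AG)$ acts on the contractible complex $K_\G$ cocompactly and with finite cell stabilizers \cite{CSV}, so by Corollary~\ref{cor:FixedPoint} every finite subgroup lies in a cell stabilizer; cocompactness gives finitely many conjugacy classes of cell stabilizers, and each such stabilizer is a finite group with only finitely many subgroups.
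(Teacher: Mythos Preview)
Your proposal is correct and your first argument is essentially the paper's own proof, stated with more care. The paper writes, tersely: realize $G$ on a marked $\G$-complex (Theorem~\ref{thm:NRealization}); ``changing the marking produces a conjugate subgroup, so we may ignore the markings''; there are only finitely many $\G$-complexes because there are only finitely many compatible collections of $\G$-partitions; and each finite cube complex has only finitely many cubical isometries.

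The one place you go beyond the paper is in worrying whether the conjugating element $\phi=(h'h^{-1})_*$ lies in $U^0(\AG)$ rather than merely in $U(\AG)$. The paper does not address this, and your proposed fix via the lemma at the start of Section~\ref{sec:XDelta} is not quite to the point (that lemma adjusts the \emph{collapse map} $c_\pi$, not the marking $h$). But the concern is harmless for a \emph{finiteness} conclusion: even if $\phi$ is only known to lie in $U(\AG)$, the index $[U(\AG):U^0(\AG)]\le |\Aut(\G)|$ is finite, so each pair $(X,\,\text{$\Aut(X)$-conjugacy class of }f(G))$ accounts for at most finitely many $U^0(\AG)$-conjugacy classes. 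Your closing abstract argument --- proper cocompact action on $K_\G$, plus Corollary~\ref{cor:FixedPoint} forcing every finite subgroup into a cell stabilizer --- is a clean repackaging that sidesteps this bookkeeping entirely; it is equivalent to, but slicker than, what the paper does.
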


\begin{proof} By Theorem~\ref{thm:NRealization} every finite subgroup of $U^0(\AG)$ is realized on a $\G$-complex.  Changing the marking produces a conjugate subgroup, so we may ignore the markings.  There are only finitely many $\G$-complexes since there are only finitely many partitions of $\G^\pm$, so only finitely many compatible collections of $\G$-partitions, each of which determines a unique blowup of $\SaG$.  Finally, there are only finitely many cubical isometries of a finite cube complex.
\end{proof}

\bibliography{OSbib.bib}
\bibliographystyle{alpha}

\end{document}